\documentclass[11pt,reqno]{amsart}

\usepackage[T1]{fontenc}
\usepackage[utf8]{inputenc}
\usepackage{amsmath}
\usepackage{amssymb}
\usepackage{amsthm}
\usepackage{array}
\usepackage{graphicx}

\usepackage{booktabs}
\usepackage{needspace}
\usepackage[margin=1.1in]{geometry}
\usepackage[hidelinks]{hyperref}
\hypersetup{pdfauthor={Enkai Zhang},pdftitle={Density regions, integer certificates and packing colorings of distance graphs}}

\theoremstyle{plain}
\newtheorem{theorem}{Theorem}
\newtheorem{lemma}[theorem]{Lemma}

\newcommand{\D}[1]{D(1,#1)}
\newcommand{\chirho}{\chi_{\rho}}

\theoremstyle{plain}
\newtheorem{Stheorem}{Theorem}
\theoremstyle{plain}
\newtheorem{Slemma}[Stheorem]{Lemma}
\providecommand{\MainPrefix}{}
\providecommand{\SuppPrefix}{}
\hypersetup{hypertexnames=false}
\begin{document}
\hypertarget{article-start}{}
\renewcommand{\SuppPrefix}{Supplement }

\title[Density regions for distance graphs]{Density regions, integer certificates and packing colorings\
of distance graphs}

\author{Enkai Zhang}
\address{University of Toronto Scarborough, Toronto, Ontario, Canada}
\email{\href{mailto:ek.zhang@mail.utoronto.ca}{ek.zhang@mail.utoronto.ca}}

\subjclass[2020]{05C15, 05C12, 05C85}
\keywords{packing chromatic number, distance graph, weighted density, integer potential
certificate, periodic coloring}

\begin{abstract}
We study simultaneous color densities in packing colorings of integer
distance graphs. For $D(1,6)$, we determine several exact density regions and
prove that colors $1$ through $7$ have maximum combined density $211/252$.
When this maximum is approached, the seven individual color frequencies are
forced to converge to a specified vector. On an optimal low-color layer,
some density vectors have nonperiodic realizations but no periodic realization;
we determine how much accumulated density loss is necessary for switching
between the relevant configurations. For sufficiently large additional color
indices, a fixed finite graph describes the joint density region. In particular,
we determine a seven-vertex region for every $i\equiv8\pmod{14}$ with $i\ge36$
and prove that $36$ is the first stable index in this residue class. The proofs
combine finite-state integer certificates with explicit constructions and
limit arguments. Applications give
$17\le\chi_\rho(D(1,6))\le20$,
$18\le\chi_\rho(D(1,8))\le22$, and
$\chi_\rho(D(1,9))\le17$.

\end{abstract}

\maketitle

\section{Color densities and the main results}
For an integer $t\ge2$, the distance graph $D(1,t)$ has vertex set $\mathbb Z$;
two vertices are adjacent when their difference in absolute value is $1$ or
$t$. Distances are shortest-path distances in this infinite graph. A packing
$k$-coloring assigns a color in $\{1,\ldots,k\}$ to every vertex so that two
distinct vertices of color $i$ have distance greater than $i$. The least such
$k$ is $\chi_\rho(D(1,t))$.

A partial packing coloring may leave vertices uncolored, denoted by $0$.
Only positive colors are subject to separation constraints. For a finite
integer interval $I$, let $n_i(I)$ count its vertices of color $i$, and put
\[
 M_j(I)=\sum_{i=1}^j n_i(I).
\]
We ask which color frequencies can be achieved simultaneously. In particular,
a layout that maximizes $M_j(I)/|I|$ need not leave the best possible positions
for additional colors. We study this interaction and its consequences for complete colorings.

The ordinary density of color $i$, when it exists, is
$\rho_i=\lim_{N\to\infty}n_i([0,N)\cap\mathbb Z)/N$. Unless a theorem specifies ordinary prefixes, a simultaneous
limiting vector is taken along one common sequence of intervals $I_N$ with
$|I_N|\to\infty$. Different coordinates are not allowed to choose different
subsequences. A coloring has uniformly bounded discrepancy at density $\rho_i$
for color $i$ if there is a constant $C$ such that
\[
 \bigl|n_i(I)-\rho_i\,|I|\bigr|\le C
\]
for every finite interval $I$. For a set of positions $S\subseteq\mathbb Z$, its upper Banach density is
\[
 \limsup_{N\to\infty}\ \sup_{a\in\mathbb Z}\frac{|S\cap[a,a+N)|}{N}.
\]
A periodic coloring repeats a finite block. Its density exists, but conclusions
about periodic realizations and conclusions about arbitrary limiting vectors
are kept separate below.

\paragraph{Main results.}
The central question is how optimizing a group of colors changes the
positions available to other colors. We obtain three kinds of answer.
\begin{enumerate}
\item \emph{Exact regions and forced frequencies.}
The feasible pairs consisting of the density of colors $1$--$4$ and
the density of color $5$ form the polygon of
Theorem~\ref{M-thm:global-density-region}. We also prove the unrestricted
seven-color maximum $211/252$ and determine the individual frequencies
whenever this maximum is approached (Theorem~\ref{M-thm:cr-seven}).
These results use both upper certificates and attaining constructions.
\item \emph{Optimal components need not support the same high colors.}
At the maximum low-five density $65/84$, two types of component give
different high-color capacities. Their ordinary limiting region is a
convex hull, while their periodically attainable points occupy a smaller
union of rectangles (Theorem~\ref{M-thm:infinite-pair-regions}).
Theorem~\ref{M-thm:component-deficit-rate} determines the asymptotic
cost of mixing the components when a target lies outside that union.
\item \emph{A fixed structure governs sufficiently large color indices.}
Theorem~\ref{M-thm:eventual-high-regions} describes all eventual low-five/high
regions using finitely many templates in each residue modulo $14$.
Theorem~\ref{M-thm:cr-residue8} makes this description explicit for
$i=36+14t$, $t\ge0$, and proves that $36$ is the first stable index
within this residue class.
\end{enumerate}

Sections~\ref{M-sec:certificate-principle}--\ref{M-sec:all-high} introduce the
certificate method and the optimal low-color structures needed in these
proofs. Sections~\ref{M-sec:density-regions}--\ref{M-sec:eventual-high-regions}
develop the regions and their realizations. Finally
Section~\ref{M-sec:constr} gives the chromatic applications
$17\le\chi_\rho(D(1,6))\le20$,
$18\le\chi_\rho(D(1,8))\le22$ and $\chi_\rho(D(1,9))\le17$.
The exact chromatic numbers remain open. The distinction between
ordinary limits, bounded discrepancy and exact periodic attainment
is part of each region statement.

\paragraph{Related work.}
The use of joint low-color density bounds followed by high-color spacing
estimates goes back to Ekstein, Holub and Lidick\'y \cite{M-EHL12}. Finite-state and
cycle-mean methods for density problems in distance graphs have also been
used for independence ratios \cite{M-CGHRS16}. The vector-valued formulation is related
to rotation sets for subshifts of finite type: Ziemian \cite{M-Ziemian95} proves convex-hull
and periodic-approximation results under the stated transitivity hypotheses.
We use this background. The new statements concern the exact regions and
coefficients for these color sets, the interaction between distinct optimal
components, and the explicit parameter formulas. Periodic approximation does
not imply exact periodic attainment on a constrained optimal face.

For packing-coloring background and complexity, see \cite{M-BKR07,M-EHT14,M-FG10,M-FKL09,M-GHHHR08,M-Survey20}. The square-grid
case was settled by SAT methods \cite{M-MRCM17,M-SH23}; \cite{M-BFHJM24,M-EF25} concern related $S$-packing
colorings, and \cite{M-HZZ25} gives further SAT-based grid bounds. For the three distance
graphs considered here, the earlier bounds are those recorded in \cite{M-EHL12,M-Togni14,M-SV15,M-SV16,M-Survey20}.
In particular, the corrigendum \cite{M-SV16} changes the printed direction of \cite[Theorem 1(i)]{M-SV15} to $\chi_\rho(D(1,6))\ge16$. We therefore use $16$ as the
previous lower bound and $23$ as the previous upper bound, rather than the
inconsistent upper-bound entry reproduced in \cite{M-Survey20}.

The earlier bounds for the other two distance graphs are
\cite{M-EHL12,M-Togni14,M-Survey20}
\[
 15\le\chi_\rho(D(1,8))\le25,\qquad \chi_\rho(D(1,9))\le18.
\]
The complete coloring words and the
chromatic lower-bound proofs are in the Technical Supplement.

\section{The metric and a finite certificate principle}\label{M-sec:certificate-principle}

\begin{lemma}[metric; {\cite[Lemma~1]{M-Togni14}}]\label{M-lem:metric}
If $\delta \ge 0$, $\delta = qt + r$ and $0 \le r < t$, then
$d_{\D{t}}(0,\delta) = q + \min(r,\,t+1-r)$.
\end{lemma}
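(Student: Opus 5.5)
The plan is to show that the right-hand side is both an upper bound and a lower bound for the distance. The key observation is that a path from $0$ to $\delta$ in $\D{t}$ is, up to reordering, determined by the numbers of steps of each type: $a_+$ steps of $+1$, $a_-$ steps of $-1$, $b_+$ steps of $+t$ and $b_-$ steps of $-t$. Since $\mathbb Z$ is abelian and the graph is vertex-transitive under translation, the order of steps does not matter. Hence
\[
 d_{\D{t}}(0,\delta)=\min\{\,|a|+|b| : a+bt=\delta,\ a,b\in\mathbb Z\,\},
\]
where $a=a_+-a_-$ and $b=b_+-b_-$. A minimal path never uses opposite steps of the same length, so its length is $|a|+|b|$. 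The whole lemma therefore reduces to an integer minimization over the one-parameter family $b\in\mathbb Z$, $a=\delta-bt$.

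For the upper bound I would exhibit two paths. The first takes $q$ steps of $+t$ and $r$ steps of $+1$, giving length $q+r$. The second takes $q+1$ steps of $+t$ and $t-r$ steps of $-1$, giving length $q+1+t-r$. Together these give $d\le q+\min(r,t+1-r)$.

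For the lower bound I would minimize $f(b)=|b|+|\delta-bt|$ over $b\in\mathbb Z$. The function $f$ is convex and piecewise linear in $b$. For $b\le q$ with $b\ge0$ we have $f(b)=b+\delta-bt$, which is nonincreasing in $b$ since $t\ge2$; so the minimum over $0\le b\le q$ is $f(q)=q+r$. For $b\ge q+1$ we have $f(b)=b+bt-\delta$, which is increasing, so the minimum there is $f(q+1)=q+1+t-r$. For $b<0$ we get $f(b)\ge \delta+|b|(t+1)>f(0)\ge f(q)$. Hence $\min_b f(b)=\min(q+r,\,q+1+t-r)$, which is the claimed formula.

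The only real obstacle is justifying the reduction to $\min(|a|+|b|)$ cleanly: one must argue that any walk realizes some pair $(a,b)$ with length at least $|a|+|b|$, and conversely that any pair is realized by a path of exactly that length. Both directions are immediate from counting steps. Since the statement is attributed to \cite[Lemma~1]{M-Togni14}, one could alternatively simply cite it, but the short argument above is self-contained.
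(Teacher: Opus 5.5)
Your proposal is correct and follows the paper's proof essentially step for step. Both reduce the distance to $\min_{b\in\mathbb Z}(|b|+|\delta-bt|)$ by reordering signed unit steps and $t$-steps, use monotonicity on $0\le b\le q$ and on $b\ge q+1$ together with the comparison of negative $b$ against $b=0$, and realize the two remaining candidates $b=q$ and $b=q+1$ by explicit paths; yours only spells out slightly more explicitly that every walk has length at least $|a|+|b|$ and that each admissible pair is realized with exactly that length.
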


\begin{proof}
The proof is included for self-containedness. A path can be rearranged into
signed unit steps and signed $t$-steps, so its
minimum length is $\min_b\,(|b| + |\delta - bt|)$ over integers $b$. For
$0 \le b \le q$ this expression decreases as $b$ increases; for $b \ge q+1$ it
increases. Negative $b$ cannot improve the value at $b = 0$. The two remaining
candidates $b = q$ and $b = q+1$ give the formula, and both are realized by
paths.
\end{proof}

For a finite set of selected colors $C$ put
\[
F_i = \{\,\delta > 0 \;:\; \delta = a + tb \text{ for integers } a,b
\text{ with } |a| + |b| \le i \,\}.
\]
Equivalently $F_i$ consists of the positive displacements at distance at most
$i$. It is finite and its largest element is $ti$. At a cut immediately before
the next position, record the ages of all occurrences of each selected color
among the preceding $ti$ positions; age $1$ denotes the immediately preceding
position. These tuples form history states.

From a state, append either a vacancy or one selected color. Appending $i$ is
allowed exactly when its age list is disjoint from $F_i$. Increase each old age
by one, delete ages greater than their respective horizons, and insert age $1$
for the appended color. Enumerating this rule from the empty state produces
every reachable state and every legal transition. Multiple edges, if present,
are checked as separate transitions. Assign a positive integer reward $w_i$ to
an occurrence of $i$, and reward zero to a vacancy.

\begin{lemma}[uniform certificate]\label{M-lem:cert}
Suppose integers $h(v)$, $p \ge 0$ and $q > 0$ satisfy
$h(v) - h(u) \ge q\,w(u,v) - p$ on every legal transition. If $h_0$ is the
empty-state potential and $H$ the maximum potential, then every interval $I$ of
$N$ positions in every packing coloring satisfies
\begin{equation}\label{M-eq:cert}
\sum_{i \in C} w_i\, n_i(I) \;\le\; \frac{p}{q}\,N + \frac{H - h_0}{q}.
\end{equation}
\end{lemma}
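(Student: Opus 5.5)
The plan is a telescoping argument along the history-state walk that the coloring traces on $I$. Fix a packing coloring and an interval $I=[a,a+N)$. Restrict the coloring to the selected colors $C$: every vertex whose color is not in $C$ (including $0$) is read as a vacancy. This restriction is still a packing coloring for the colors in $C$, because deleting occurrences cannot create a violation. The reward functional $\sum_{i\in C}w_i n_i(I)$ is unchanged by the restriction.

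The main difficulty is the starting state. The true history state at the cut before $a$ records the colors in $[a-tm,a)$, where $m=\max C$. This state need not equal the empty state, and $h$ could be smaller there than $h_0$. I will avoid the problem as follows. Erase all colors before $a$, that is, treat positions $<a$ as vacancies. This again gives a legal partial packing coloring of the selected colors, and it has the same counts on $I$. The history state at the cut before $a$ is then the empty state.

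Now scan positions $a,a+1,\ldots,a+N-1$ in order. At each step the appended symbol is legal. The reason is that an occurrence of color $i$ at position $x$ has no earlier occurrence of $i$ at distance $\le i$. By Lemma~\ref{M-lem:metric} and the definition of $F_i$, this says exactly that none of the recorded ages of $i$ lies in $F_i$. Ages larger than the horizon $ti$ are irrelevant, since $\max F_i=ti$. So the update rule gives a walk $v_0=\text{empty},v_1,\ldots,v_N$ in the enumerated transition graph. All these states are reachable, so $h$ is defined on them and $h(v_k)\le H$.

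Sum the inequality $h(v_k)-h(v_{k-1})\ge q\,w(v_{k-1},v_k)-p$ over $k=1,\ldots,N$. The sum of the rewards along the walk is $\sum_{i\in C}w_i n_i(I)$. This gives
\[
q\sum_{i\in C}w_i n_i(I)-pN\le h(v_N)-h_0\le H-h_0.
\]
Dividing by $q>0$ yields \eqref{M-eq:cert}. The only points needing care are the two already isolated: the erasure trick that forces an empty start, and the check that legality of transitions is exactly the packing condition. The second check uses that $F_i$ is symmetric in sign in the sense that only positive displacements to earlier positions matter, which holds because distance is symmetric.
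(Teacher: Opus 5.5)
Your proposal is correct and follows the paper's own proof. You replace unselected colors by vacancies and erase all occurrences before $I$, so that the word on $I$ traces a legal path from the empty state. You then telescope the transition inequalities and bound the final potential by $H$. Your explicit check that legality of an appended color is exactly the packing condition, using $F_i$ and the horizon $ti$, simply spells out the induction that the paper leaves implicit.
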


\begin{proof}
Replace unselected colors by vacancies and remove occurrences outside $I$. This
only removes restrictions, so the word on $I$ traces a legal path from the
empty state, by induction on its length. Sum the transition inequalities along
this path. The potentials telescope, and the final potential is at most $H$.
This proves \eqref{M-eq:cert}, uniformly in $I$ and $N$.
\end{proof}

This argument counts all colors on one common interval. A bound on the
$\limsup$ of the joint weighted count need not bound the weighted sum of the
individual $\limsup$s. Separately established single-color upper bounds can be
added, but that generally loses the joint restriction used here.

The \emph{mean reward} of a directed cycle is its total edge reward divided
by its number of edges. A \emph{potential} is a function on the graph
vertices satisfying the edge inequalities in Lemma~\ref{M-lem:cert}.
That lemma gives one direction of the relation between potentials and
maximum cycle means. Give every transition the length $q\,w(u,v)-p$. A potential with
$h(v)-h(u)\ge q\,w(u,v)-p$ exists if and only if no cycle has positive total length, that is,
if and only if every cycle of the automaton has mean reward at most $p/q$ (this is the
feasibility criterion for systems of difference constraints, and the maximum mean reward of a
cycle can be computed by Karp's algorithm~\cite{M-Karp78}); when it exists it can be taken to be
the longest-path distance from the empty state, which is an integer because the lengths are
integers. Each $p/q$ below is the exact maximum cycle mean of its automaton, computed in this way,
so the potentials are sharp: the attaining periodic partial colorings in Table~\ref{M-tab:scalar-certificates} are cycles of
mean reward exactly $p/q$.

\Needspace{17\baselineskip}
\paragraph{A small example.}
Take $D(1,2)$ and allow only colors $1,2$, together with vacancies. Their
forbidden positive displacements are $F_1=\{1,2\}$ and
$F_2=\{1,2,3,4\}$. The history graph has 13 reachable states and 21 edges.
Give reward one to either positive color and zero to a vacancy. A potential
with range two satisfies
\[
 h(v)-h(u)\ge2w(u,v)-1
\]
on every edge. Lemma~\ref{M-lem:cert} therefore gives
\[
 n_1(I)+n_2(I)\le\frac{|I|}{2}+1.
\]
The repeated word $100102$ is legal and has combined density $1/2$, so the
asymptotic bound is attained. Separately, colors 1 and 2 can have densities
$1/3$ and $1/5$; their sum is larger than $1/2$. The joint bound detects a
constraint lost by optimizing the colors one at a time. The complete 13-state
potential is printed in Supplement~\ref*{S-supp:worked-example}.

\subsection{A smaller recognizer with the same legal words}\label{M-sec:future-model}
For finite forbidden positive-displacement sets $F_c$ and a vacant
symbol $0$, let $B_c$ be the offsets at which a future occurrence of
color $c$ is forbidden by the current past. Appending $c>0$ is legal
exactly when $0\notin B_c$. Shift every set down by one and discard
negative offsets; if the appended symbol is $c$, additionally insert
$F_c-1$ in its set. Appending $0$ only shifts the sets. Start from
empty sets and retain all reachable tuples. We call this automaton the
future-constraint graph and its states forbidden-mask states.

Induction shows that this recognizes exactly the finite legal partial
colorings. It is also the minimal productive deterministic recognizer:
if two tuples differ at offset $t$ in coordinate $c$, the suffix
$0^tc$ distinguishes them. When some $F_c$ is nonempty, adding the
reachable rejecting sink gives the minimal total deterministic automaton.
With no forbidden displacements there is no rejecting sink. Density
potentials concern only legal productive edges.

Potentials transfer exactly at the level of the bound. If $\pi$ is the
label-preserving history quotient and $H$ is a valid old potential, set
\[
 \overline H(u)=\max_{\pi(x)=u}H(x).
\]
Histories in one fiber permit exactly the same next labels. For a quotient edge $u\mathrel{\smash{\xrightarrow{c}}}v$, choose
a maximizing history $x$ in the source fiber. Its $c$-successor $y$
lies in the target fiber, so
\[
 qw(c)\le p+H(y)-H(x)\le p+\overline H(v)-\overline H(u).
\]
The range does not increase. The same argument applies separately at
each fixed pair of high-color ages. A fiber minimum would not justify
this inequality. This preserves the density bound and its boundary range. It does not
identify the original graph's critical vertices or recover its individual
potential values.

The critical-component counts used later refer to the explicitly named
history graph. Transferring a bound to its quotient does not replace the
separate certificate identifying those components.

\subsection{Certificate data}
The finite data below are checked by regenerating every transition with occurrence-age lists,
separately from the producer's binary-history implementation, in exact integer and rational
arithmetic.

\begin{table}[h]
\centering
\footnotesize
\setlength{\tabcolsep}{4pt}
\begin{tabular}{cclcrrcrrr}
\toprule
Cert. & $t$ & Colors & Rewards & States & Transitions & $p/q$ & $h_0$ & $H$ & Period \\
\midrule
E & 6 & $1,2,3,4$ & $1,1,1,1$ & 800532 & 1755623 & $83/114$ & 0 & 322 & 114 \\
A & 8 & $1,2,4$   & $208,227,227$ & 592902 & 1114749 & $137$ & 0 & 560 & 86 \\
B & 8 & $1,5$     & $4,5$ & 6829 & 10270 & $2$ & 0 & 8 & 41 \\
C & 8 & $1,6$     & $1,16$ & 7269 & 10878 & $1$ & 0 & 15 & 28 \\
D & 8 & $3,5$     & $151,147$ & 24365 & 31145 & $3445/144$ & 0 & 50077 & 144 \\
\bottomrule
\end{tabular}
\caption{The five certificates.}\label{M-tab:scalar-certificates}
\end{table}

Each certificate includes a periodic partial coloring whose mean reward is
$p/q$. Every forbidden displacement is checked modulo its period, including
displacements that return to the same residue in a different period copy. Thus
the bounds on mean weighted occupancy are sharp. Attainment is not needed for
Theorem~\ref{M-thm:main}, which uses only \eqref{M-eq:cert}. For an arbitrary
partial coloring the precise upper-density formulation is the $\limsup$ of the
\emph{joint weighted count} on $[0,N)$ divided by $N$, not a sum of separate
$\limsup$s.

\subsection{Realizing density vectors}\label{M-sec:realization}
\begin{lemma}[Concatenation and thinning]\label{M-lem:density-realization}
Consider finitely many partial-coloring words with finite forbidden
displacement sets.
\begin{enumerate}
\item Suppose there is a legal history state $v$ such that both words label
closed walks from $v$ back to $v$. Then every convex
combination of their density vectors has a realization with uniformly
bounded interval discrepancy. Rational combinations have periodic
realizations.
\item For arbitrary periodic words, every convex combination of their
density vectors has a realization with ordinary density limits.
\item If a realization has ordinary limits, or uniformly bounded
discrepancy, independently thinning each color's occurrence sequence
preserves the stated density property at each smaller nonnegative target. A rational
target below a periodic vector also has a periodic realization.
\end{enumerate}
\end{lemma}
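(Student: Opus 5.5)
For part (1) I will build the realization as a concatenation of whole copies of the two closed walks. Let $u$ and $u'$ be the words labelling closed walks at $v$, with lengths $\ell,\ell'$ and density vectors $a,b$. Take a reachable history state and a legal word $w_0$ leading from the empty state to $v$. Then any word $w_0 x_1x_2\cdots$ with each $x_k\in\{u,u'\}$ traces a legal path, so it is a legal partial coloring on $[0,\infty)$.

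To reach $\lambda a+(1-\lambda)b$, I will weight by length. Choose the proportion $\mu$ of $u$-blocks so that the total-length-weighted mean equals the target. This means solving $\mu\ell/(\mu\ell+(1-\mu)\ell')=\lambda$, which is always possible. I then order the blocks by a Sturmian/balanced rule: block $k$ is $u$ iff $\lfloor (k+1)\mu\rfloor>\lfloor k\mu\rfloor$. With this rule the number of $u$-blocks among any $m$ consecutive blocks differs from $m\mu$ by at most $1$.

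\begin{itemize}
\item Within a block the count $n_i$ differs from density times length by at most $\max(\ell,\ell')$.
\item Between blocks the balanced choice keeps the error bounded.
\end{itemize}

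Together these give a uniform constant $C$ for every finite interval. A two-sided realization on $\mathbb Z$ follows because the closed walks at $v$ can be repeated backwards too. Since the state reached at each cut is $v$ for every block choice, I can use the bi-infinite balanced sequence directly. For rational $\mu$ the balanced sequence is periodic, which gives a periodic coloring. For more than two words, I will iterate the same construction or use a multi-letter balanced sequence.

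For part (2) the words need not share a state, so I will insert transition gaps made of vacancies. The key observation is that a run of at least $t\cdot\max C$ vacancies resets any history to the empty state, since all ages then exceed their horizons. So periodic words can be joined through vacancy runs of fixed length $g$. I concatenate $m_k$ periods of word $j_k$ with separators of length $g$, letting the block lengths $m_k\to\infty$ slowly. The separators then have density zero, and the proportions of each word converge to the desired convex weights. Because block lengths grow slowly relative to total length, the ratio of a single block to the prefix length tends to $0$, and ordinary limits exist. The main care here is the choice of schedule, for example $m_k\approx\sqrt{k}$ in round-robin order with the weights controlled by counts.

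For part (3), deleting occurrences of a color keeps the coloring legal, since constraints only restrict pairs of the same color. To thin color $i$ from density $\rho_i$ to $\sigma_i\le\rho_i$, I keep the $k$-th occurrence iff $\lfloor k\sigma_i/\rho_i\rfloor>\lfloor (k-1)\sigma_i/\rho_i\rfloor$. Kept counts in any run of consecutive occurrences then deviate from the ratio by at most $1$. This preserves bounded discrepancy, and it preserves ordinary limits, at the target. In the periodic rational case I use a period $P\cdot s$ with $s$ chosen so that each color's occurrence count per period times $\sigma_i/\rho_i$ is an integer, and I keep evenly spaced occurrences; the result is periodic.

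The main obstacle is the two-sided bounded-discrepancy bookkeeping in part (1). That is where I would check carefully that the balanced block sequence yields a single constant valid for all intervals, including intervals that cut blocks.
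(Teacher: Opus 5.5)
Your proposal is correct and follows essentially the same route as the paper. Part (1) uses a balanced two-sided block sequence over the shared state $v$, with the length-weighted density formula. Part (2) uses fixed vacancy buffers longer than every forbidden displacement and growing blocks. Part (3) uses balanced thinning of each color's occurrence sequence and clears denominators in the periodic case. The only deviation is cosmetic: in (2) you use round-robin blocks of roughly $\sqrt{k}$ periods each, where the paper uses stages whose total length grows linearly and already contain every word in the required proportions. Both give vanishing relative rounding, buffer and partial-block error.
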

\begin{proof}
For (1), let the word lengths be $L_0,L_1$ and their count vectors
$v_0,v_1$. A two-sided balanced binary sequence with block frequency
$p$ has $pk+O(1)$ blocks of type 1 in any $k$ consecutive blocks.
Concatenation is legal by the shared history. Its density is
\[
 \frac{(1-p)v_0+pv_1}{(1-p)L_0+pL_1}.
\]
As $p$ varies this traces the segment between the two original
density vectors. Counts and lengths have bounded error on whole
blocks; an arbitrary interval adds at most two partial blocks.
For rational $p$, repeat a finite block sequence with that frequency.
This also realizes any rational combination of the two vectors.

For (2), separate blocks by a fixed number of vacancies greater than
every forbidden displacement. At stage $j$, take repetitions of each
periodic word with total length $j+O(1)$ in the required proportions.
Rounding and buffers contribute $O(1)$ per stage. After $k$ stages
the length is $k^2/2+O(k)$, while both the accumulated error and the
last incomplete stage have size $O(k)$. Hence ordinary densities
converge. The same construction on the two half-lines gives a
two-sided coloring.

For (3), retain a balanced fraction of the occurrences of each color.
On any consecutive set of occurrences the retained count differs
from that fraction by $O(1)$. Deleting colors cannot create a
conflict, so the stated density property is preserved. For rational
periodic targets, repeat enough periods to make every desired count
integral, select those occurrences, and repeat the resulting word.
\end{proof}

Buffers in (2) generally give unbounded accumulated low-color loss.
They do not imply exact periodic attainment on an optimal face.
Where a theorem requires the entire low projection to remain critical,
we instead exhibit compatible low-only connecting paths in that
critical component.

\section{The structure of the four-color density optimum}\label{M-sec:critical}

The optimum $83/114$ has a large family of realizations. We describe the
part of the history graph carrying optimal cycles, then quantify how
closely a near-optimal coloring must follow it. Throughout this section
$0$ denotes a vacancy and labels $1,2,3,4$ are packing colors of $\D{6}$.
For an edge $e=(u,v)$ of the low-four history graph, let $\ell(e)$ be its
output symbol and let $w(e)=1$ if $\ell(e)\in\{1,2,3,4\}$, and $w(e)=0$
otherwise. Its slack under certificate E is
\[
 s(e)=h(v)-h(u)-114w(e)+83\ge0.
\]
An edge is critical if it lies on a directed cycle all of whose edges have
zero slack. Thus the critical graph is the union of the optimal cycles;
zero-slack transient edges are not included merely because their individual
slack is zero. A strongly connected component is a maximal vertex set in
which every vertex can reach every other by a directed path.

\begin{theorem}\label{M-thm:critical-structure}
The critical graph has two strongly connected components, with respectively
$308$ vertices and $318$ edges, and $304$ vertices and $314$ edges.
Every infinite critical path has label densities
\[
 (\rho_0,\rho_1,\rho_2,\rho_3,\rho_4)
   =\frac1{114}(31,48,16,12,7).
\]
It admits nonperiodic paths. \end{theorem}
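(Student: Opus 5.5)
The plan is a finite computation on the history graph of certificate E, followed by two short structural arguments. First, regenerate the low-four history graph for $\D{6}$ with colors $1,2,3,4$ (800532 states, 1755623 transitions), together with the integer potential $h$ of certificate E. On every edge compute the slack $s(e)=h(v)-h(u)-114w(e)+83$ in exact integer arithmetic. Then restrict to the zero-slack subgraph $Z$ and compute its strongly connected components, for instance with Tarjan's algorithm.

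An edge lies on a directed cycle of zero-slack edges exactly when both endpoints lie in one nontrivial strongly connected component of $Z$. This includes the case of a self-loop. So the critical graph is the union of the edges internal to the nontrivial components of $Z$. Reading off vertex and edge counts should give exactly two components, of sizes $(308,318)$ and $(304,314)$. One check worth recording: the critical graph does not depend on the particular potential chosen. The reason is that any cycle of mean $83/114$ has total slack zero under every valid potential, so all its edges have zero slack. Hence the critical graph is intrinsic to the automaton, not an artifact of the certificate.

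For the density claim, pass to cycles. In each component the edge count exceeds the vertex count by $10$, so the cycle space has dimension $11$. I would compute a fundamental cycle basis, for example from a spanning tree plus the $11$ chords, and check that every basis cycle has label-count vector proportional to $(31,48,16,12,7)$. Equivalently, find a function $g_j$ on vertices with
\[
 [\ell(e)=j]-\rho_j=g_j(v)-g_j(u)
\]
on every critical edge, for each label $j$. Such a $g_j$ exists exactly when every cycle has the right count of label $j$; it is found by integrating along a spanning tree and checking the chords.

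Given these functions, any infinite critical path stays inside one component, since the components are strongly connected and no critical edge joins them. The count of label $j$ on its first $N$ edges then differs from $\rho_jN$ by at most the range of $g_j$. This gives the stated densities, with bounded discrepancy along every prefix and every interval.

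For nonperiodic paths, I would exhibit in one component a vertex $v$ with two distinct simple closed critical walks $A$ and $B$ based at $v$. These exist because the cyclomatic number exceeds one. A Sturmian or otherwise aperiodic concatenation of $A$ and $B$ is then an infinite critical path. I still need to verify that its label sequence is nonperiodic, not just its sequence of states. For this I would pick $A$ and $B$ with different lengths, or with label words that are not powers of a common word. Then the label word of an aperiodic $\{A,B\}$-sequence is aperiodic, by unique decodability of a two-word code $\{A,B\}$ whose words do not commute.

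The main obstacle is making the computation trustworthy at this scale: enumerating the history graph independently, and confirming that $h$ is a valid potential on every transition. After that, the cycle-vector identity is a routine linear check.
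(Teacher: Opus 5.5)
\paragraph{Structure and densities.} Your identification of the critical graph and your density argument are the paper's in substance. The paper's integer potentials $P_i$, with $P_i(v)-P_i(u)=114\mathbf 1_{\ell(e)=i}-k_i$, are $114$ times your $g_j$, and the densities follow by the same telescoping. The one difference is in how completeness of the critical graph is verified. You rely on an SCC computation on the zero-slack subgraph. The paper instead supplies an integer $g$ with $b(e)\le 99s(e)+g(v)-g(u)$ on every edge, where $b=0$ exactly on the listed critical edges. This is an edge-by-edge certificate that no zero-slack cycle was missed, and it is reused for the stability bound in the next theorem. For the present statement your direct computation is equivalent.

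\paragraph{The gap: nonperiodicity.} This step is not justified as written. Unique decodability of the label code $\{a,b\}$ of $A,B$ is a statement about finite words. It says nothing by itself about infinite concatenations: the finite code $\{a,ab,bb\}$ has $a(bb)^{\omega}=ab(bb)^{\omega}$. Even granting unique factorization of infinite words, you still have to pass from a periodic label sequence to a periodic block sequence, for instance by a pigeonhole on cut positions modulo the putative period. For two non-commuting words the infinite-word uniqueness does hold (the Euclidean reduction $y=xu\mapsto\{x,u\}$ ends at a prefix code), but it needs its own proof. Also, ``different lengths'' alone does not exclude commuting label words such as $u$ and $uu$.

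\paragraph{How the paper avoids this, and two fixes for your setup.} The paper takes two closed paths of equal length $114$ at one vertex, with different first labels. The blocks are then aligned: a label period $P$ gives the period $114P$, which shifts aligned blocks by $P$ and so makes the block sequence periodic. Within your own setup, either of two short fixes works.
\begin{enumerate}
\item $A$ and $B$ leave $v$ by distinct edges. These edges carry distinct labels, because the history update is a function of the state and the appended symbol. So $\{a,b\}$ is a prefix code and decoding from any cut is forced. Two cuts congruent modulo a label period then carry identical subsequent block sequences, which is impossible for a Sturmian choice.
\item The history state is a function of the preceding $24$ labels. Hence periodic labels force periodic states, and therefore a periodic set of visits to $v$. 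Since $A$ and $B$ are simple cycles through $v$, those visits are exactly the block boundaries.
\end{enumerate}
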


\begin{proof}
The calculation uses the complete $800532$-vertex, $1755623$-edge history
graph from certificate E. Its zero-slack subgraph has $1031501$ edges.
The verification data package lists the $612$ critical vertices and $632$ labelled edges,
checks both stated components to be strongly connected, and supplies an
integer function $g$ on the whole graph satisfying
\begin{equation}\label{M-eq:critical-bad-edge}
 b(e)\le99s(e)+g(v)-g(u),
\end{equation}
where $b(e)=0$ on the listed edges and $b(e)=1$ elsewhere.
Every one of the $1755623$ inequalities is checked in integer arithmetic.
Summing on any zero-slack cycle forces $\sum b(e)=0$, proving that no
critical cycle was omitted. Conversely every listed edge is on such a
cycle, by strong connectivity and its checked zero slack.

The verification data package also supplies five integer potentials $P_i$ on the critical
graph such that, on every critical edge,
\[
 P_i(v)-P_i(u)=114\mathbf1_{\ell(e)=i}-k_i,
 \qquad(k_0,k_1,k_2,k_3,k_4)=(31,48,16,12,7).
\]
The finite potential ranges prove all five density assertions by
telescoping, without requiring periodicity.

Each component contains two distinct length-$114$ closed paths based at
the same vertex, with different first labels. Their explicit words and
edges are supplied and checked. Arbitrary two-sided concatenations of the
two words remain legal and optimal. An aperiodic sequence of block choices
gives a nonperiodic label sequence: a periodic label sequence would induce
a periodic sequence of aligned $114$-blocks.

\end{proof}

Supplement Section~\ref*{S-supp:critical-counts} gives the exact entropy
and period counts of the critical subsystem. An optimal-density sequence
may also have zero-density defects. We next control those defects.

\begin{theorem}\label{M-thm:critical-stability}
For every length-$N$ legal low-color history path, let $M_N$ be its number
of nonzero labels and $B_N$ its number of noncritical edges. Then
\begin{equation}\label{M-eq:critical-stability}
 B_N\le99(83N-114M_N)+31930.
\end{equation}
If $M_N/N\to83/114$, then $B_N/N\to0$, and the individual label
densities are $(31,48,16,12,7)/114$ even in the presence of these defects.
\end{theorem}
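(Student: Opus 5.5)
The plan is to telescope the two integer inequalities already available from certificate E and from the proof of Theorem~\ref{M-thm:critical-structure}. For a length-$N$ legal path $e_1,\dots,e_N$ starting at the empty state, summing the slacks gives
\[
 \sum_{j} s(e_j)=h(v_N)-h(v_0)-114M_N+83N\le (H-h_0)+83N-114M_N = 322+83N-114M_N .
\]
Summing \eqref{M-eq:critical-bad-edge} along the path gives
\[
 B_N=\sum_j b(e_j)\le 99\sum_j s(e_j)+g(v_N)-g(v_0).
\]
Combining these,
\[
 B_N\le 99(83N-114M_N)+99\cdot322+\bigl(\max g-g(v_0)\bigr).
\]
The constant $31930$ should then equal $99\cdot 322=31878$ plus the range term $\max g - g(\text{empty})=52$. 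This value is read off the supplied function $g$, after normalizing it so that the empty state is the base point. If the path is not required to start at the empty state, I would instead use the full ranges of $h$ and $g$; the constant then comes from the checked data in the same way. This data-dependent constant is the only genuinely computational input.

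For the limit statements, suppose $M_N/N\to83/114$. Then the right side divided by $N$ tends to $0$, so $B_N/N\to0$.

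For the individual densities, I would cut the path at noncritical edges. Between consecutive noncritical edges the path runs along critical edges, which all lie in one of the two components. The potentials $P_i$ from the proof of Theorem~\ref{M-thm:critical-structure} are defined only on the critical graph. On each maximal critical run of length $L$, telescoping $P_i$ therefore gives
\[
 \bigl|114\,n_i(\text{run})-k_iL\bigr|\le R_i,
\]
where $R_i$ is the range of $P_i$. There are at most $B_N+1$ such runs, and the noncritical edges contribute at most $B_N$ occurrences of each label. Hence
\[
 \bigl|114\,n_i-k_iN\bigr|\le (R_i+114+k_i)(B_N+1),
\]
and this is $o(N)$. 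So each label density converges to $k_i/114$ along the same sequence of intervals.

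The main obstacle is this per-run bookkeeping. The critical edges of a run must form a walk inside the critical graph, since consecutive critical edges share vertices, and that is exactly what lets the potentials $P_i$ apply. One must also check that a run never switches between the two components without passing through a noncritical edge. That holds because the components are distinct strongly connected components of the critical graph: any critical edge joining them would lie on a zero-slack cycle and would merge them.
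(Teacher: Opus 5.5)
Your proof follows the paper's route. You add $99$ times the certificate-E slack to the coboundary inequality \eqref{M-eq:critical-bad-edge} and telescope, and then control the label counts with the potentials $P_i$ from Theorem~\ref{M-thm:critical-structure}. The limit conclusions go through.

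The one step that fails as written is the explicit constant. You bound the two endpoint terms separately, $h(v_N)-h(v_0)\le 322$ and $g(v_N)-g(v_0)\le \max g-g(\text{empty})$, and then assert that the second quantity is $52$, so that the total is $31930$. Nothing in the paper supports that identification. The only checked datum is the range of the \emph{combined} function $H=99h+g$, namely $0\le H\le 31930$. In general $\max(99h+g)\le 99\max h+\max g$, with equality only when both maxima occur at a common state, so your split constant may exceed $31930$. The problem is worse for paths that do not start at the empty state. There you fall back on $99\operatorname{range}(h)+\operatorname{range}(g)$, and the paper never records $\operatorname{range}(h)$; it gives only $h_0=0$ and $\max h=322$.

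The repair is simply not to split. Substituting $s(e)$ into \eqref{M-eq:critical-bad-edge} gives $b(e)\le 99(83-114w(e))+H(v)-H(u)$. Telescoping then yields \eqref{M-eq:critical-stability} with constant $\operatorname{range}(H)=31930$, for a path from any starting state. This is exactly the paper's argument.

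For the densities, the paper extends each $P_i$ by zero off the critical vertices. A single telescoping sum over the whole path then gives $|114n_i-k_iN|\le R_i+C_iB_N$ with $C_i=\max(k_i,114-k_i)+R_i$. This is the same bookkeeping as your decomposition into at most $B_N+1$ critical runs, since each noncritical edge costs at most one jump of the extended potential. The paper's packaging just avoids tracking runs and components. Your looser bound $(R_i+114+k_i)(B_N+1)$ is correct and suffices for the limit.
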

\begin{proof}
In~\eqref{M-eq:critical-bad-edge}, set $H=99h+g$. Its independently checked
range is $0\le H\le31930$. Substitution gives
\[
 b(e)\le99(83-114w(e))+H(v)-H(u),
\]
and summation proves~\eqref{M-eq:critical-stability} uniformly over intervals.
Extend each $P_i$ by zero outside the critical vertices. Its global range
is $R_i$, where
\[
 (R_0,R_1,R_2,R_3,R_4)=(251,144,234,108,133).
\]
The edge error in its identity is zero on critical edges and has absolute
value at most $C_i=\max(k_i,114-k_i)+R_i$ otherwise. Consequently
\[
 |114n_i(N)-k_iN|\le R_i+C_i B_N.
\]
This proves the last assertion. If only an upper density equals $83/114$,
the conclusion about $B_N/N$ is initially along the subsequence attaining
that upper density; no full limit is silently assumed.
\end{proof}

\section{High colors and a quantitative density tradeoff}\label{M-sec:critical-capacity}

For $i\ge5$, legal consecutive integer gaps are at least $6i-9$, and
$2(6i-9)>6i$. Since every forbidden displacement has absolute value at
most $6i$, checking the preceding occurrence suffices for color $i$.
Take the product of the critical low-color graph with an age in
$\{1,\ldots,6i+1\}$. Every step increases the age, capped at $6i+1$;
a vacancy edge may instead carry color $i$ when its current age has
graph distance greater than $i$, after which the next age is $1$.

\begin{theorem}\label{M-thm:critical-capacities}
The largest possible density of one additional color $i$ on a critical
low-color arrangement is the value $\gamma_i$ below. Every value is attained
by a periodic partial coloring. The interval count satisfies
$n_i\le\gamma_i N+r_i$ with the listed boundary constant.
\begin{center}
\begin{tabular}{rccrcc}
\toprule
$i$&$\gamma_i$&$r_i$&$i$&$\gamma_i$&$r_i$\\\midrule
5&$7/171$&$265/171$&13&$4/285$&$322/285$\\
6&$5/152$&$24/19$&14&$1/76$&$20/19$\\
7&$5/171$&$22/19$&15&$2/171$&$236/171$\\
8&$4/171$&$25/19$&16&$5/456$&$47/38$\\
9&$7/342$&$220/171$&17&$1/95$&$20/19$\\
10&$1/57$&$106/57$&18&$3/304$&$43/38$\\
11&$1/57$&$56/57$&19&$3/323$&$368/323$\\
12&$5/342$&$233/171$&&&\\\bottomrule
\end{tabular}
\end{center}
No critical low-color arrangement can be completed using colors at most $18$.
\end{theorem}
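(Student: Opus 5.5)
The plan is to treat each $i\in\{5,\dots,19\}$ as a finite max-mean-cycle problem on the product automaton just described. Its vertices are pairs $(u,a)$ with $u$ a vertex of one of the two critical components of Theorem~\ref{M-thm:critical-structure} and $a\in\{1,\dots,6i+1\}$ the capped age of the last occurrence of color $i$. Its edges are the critical edges $u\to v$, advancing the age. A vacancy edge gets, in addition, a parallel copy carrying color $i$ when $d_{\D6}(0,a)>i$, computed by Lemma~\ref{M-lem:metric}; that copy resets the age to $1$. The first step is the reduction already noted before the statement. Consecutive occurrences of color $i$ are at least $6i-9$ apart, and $2(6i-9)>6i$, so legality of $i$ depends only on the previous occurrence. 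Moreover, low colors are unaffected by $i$ except through occupancy of vacancies. Hence infinite critical low arrangements carrying color $i$ correspond exactly to infinite paths in this product graph.

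For the upper bound I would apply Lemma~\ref{M-lem:cert} to the product graph. Give reward $1$ to the color-$i$ edges and $0$ to all others. Write $\gamma_i=p_i/q_i$ and compute an integer potential $h_i$ with $h_i(v)-h_i(u)\ge q_i w(e)-p_i$, obtained as the longest-path distance from a chosen base vertex after Karp's algorithm returns $p_i/q_i$ as the maximum cycle mean. Telescoping then gives $n_i\le\gamma_iN+r_i$ with $r_i=(\max h_i-\min h_i)/q_i$. Intervals may start at any state, so the constant is the full range, not $H-h_0$. The fractions $r_i$ in the table would come out of this normalization. Doing this separately on each of the two components and taking the larger cycle mean gives $\gamma_i$.

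Attainment is read off from the maximizing cycle. A cycle of mean $\gamma_i$ in the product graph projects to a closed critical walk carrying a legal color-$i$ placement. Repeating it gives a periodic partial coloring, and I would verify its legality modulo the period, including wraparound displacements, as was done for Table~\ref{M-tab:scalar-certificates}.

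For the final sentence, suppose a complete coloring with colors at most $18$ had a critical low arrangement. Every position not colored $1$--$4$ would then need a color in $5,\dots,18$. By Theorem~\ref{M-thm:critical-structure} the vacancy density is $31/114$. By Theorem~\ref{M-thm:critical-capacities}, each color $i$ has density at most $\gamma_i$, since $n_i\le\gamma_iN+r_i$ uniformly. So it suffices to check the exact rational inequality $\sum_{i=5}^{18}\gamma_i<31/114$. The separate bounds add legitimately here: each is a uniform interval bound applied to one common interval. If that sum inequality fails, I would instead run a joint certificate with several high colors simultaneously, which is harder because the state space grows with the product of the ages.

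The main obstacle is computational soundness rather than conceptual. The product graphs reach roughly $612\cdot(6\cdot19+1)\approx 7\times10^4$ states, which is manageable. Even so, one must make sure the product uses only critical edges, and not merely zero-slack ones. Otherwise a defect-carrying path could beat $\gamma_i$. This "on a critical arrangement" hypothesis is exactly where the precise critical graph of Theorem~\ref{M-thm:critical-structure} is used.
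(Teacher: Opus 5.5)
Your proposal is correct and follows the paper's proof. In both, integer potentials on the critical-graph/age products give $n_i\le\gamma_iN+r_i$, with $r_i$ equal to the potential range divided by $q_i$, and the maximizing closed walks are verified as periodic words in the infinite graph. The exclusion then reduces to $\sum_{i=5}^{18}\gamma_i=1217/4560<31/114$ (equivalently $83/114+\sum_{i=5}^{18}\gamma_i=4537/4560<1$), together with the uniform low-color boundary term.

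One small point concerns the constants. A longest-path potential from a single base vertex can have a larger range than necessary. To guarantee constants no larger than the listed $r_i$, take the longest-path potential from a virtual source joined to every product state; its range is the least possible, so it is at most $q_ir_i$.
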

\begin{proof}
For each product graph the verification data package supplies integers $p_i,q_i$ and
a potential $H_i$ with
\[
 q_i w_i(e)\le p_i+H_i(v)-H_i(u)
\]
on every edge, where $p_i/q_i=\gamma_i$ and
$(\max H_i-\min H_i)/q_i=r_i$.
All $709995$ edges of the fifteen products are regenerated and checked
independently. The product includes every starting age, so its bound applies
to arbitrary finite runs, including runs within an aperiodic arrangement.
The supplied closed walks give periodic words with density $\gamma_i$;
their full integer-graph distance constraints and periodic state/age histories
are independently verified. This proves exactness, rather than assuming
that an arbitrary formal product state has a compatible past.

Writing $\Gamma_K=\sum_{i=5}^K\gamma_i$ and
$R_K=\sum_{i=5}^Kr_i$, one has
\[
 \Gamma_{17}=\frac{293}{1140},\qquad
 \Gamma_{18}=\frac{1217}{4560},\qquad
 \frac{83}{114}+\Gamma_{18}=\frac{4537}{4560}<1.
\]
The uniform boundary bounds therefore exclude a complete $18$-coloring
on a critical low-color arrangement. For example, the low-color boundary
$161/57$ and $R_{18}=5062/285$ give a contradiction already on a critical
run of length $4082$. This does not require periodicity or individual
high-color density limits. For $K=19$ the single-color capacity sum exceeds
the vacancy density; that alone does not construct a simultaneous completion.
\end{proof}

\begin{theorem}\label{M-thm:low-high-tradeoff}
In any complete packing $K$-coloring of $\D{6}$ with $K=17$ or $18$,
the number $M_N$ of positions of colors $1,2,3,4$ in any length-$N$
interval satisfies
\[
 M_N\le\left(\frac{83}{114}-\eta_K\right)N+C_K,
\]
where
\[
\begin{array}{c|cc}
K&\eta_K&C_K\\\hline
17&17/223518090&105398768/37253015\\
18&23/951800250&448816257/158633375
\end{array}
\]
Thus even the upper Banach density of the low colors is strictly below
$83/114$ in either palette.
\end{theorem}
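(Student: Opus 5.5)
The plan is to combine the stability estimate of Theorem~\ref{M-thm:critical-stability} with the capacity certificates of Theorem~\ref{M-thm:critical-capacities}, extended off the critical graph. In a complete $K$-coloring every vacancy of the low-four layer must be filled by a color in $\{5,\dots,K\}$. On any interval of length $N$ this gives
\[
 N-M_N=\sum_{i=5}^K n_i(I).
\]
If the low layer were critical throughout, the capacity bounds would give $\sum n_i\le\Gamma_K N+R_K$ with $83/114+\Gamma_K<1$, which is a contradiction. So the task is to make this quantitative when the layer has $B_N$ noncritical edges, and to trade $B_N$ against the deficit $83N-114M_N$.

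\textbf{Step 1.} For each $i=5,\dots,K$, extend the product potential $H_i$ from the critical product graph to the full low-four history graph times the age coordinate. I would take the extended potential to be $H_i$ on critical states and $0$ elsewhere, and prove an inequality of the form
\[
 q_iw_i(e)\le p_i+\widetilde H_i(v)-\widetilde H_i(u)+D_i\,b(e),
\]
where $b(e)$ is the noncritical-edge indicator from \eqref{M-eq:critical-bad-edge}. The constant $D_i$ only has to absorb the potential range plus one step's reward, so $D_i\le q_i+p_i+\max H_i-\min H_i$. Summing gives
\[
 n_i\le\gamma_iN+r_i+(D_i/q_i)B_N.
\]

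\textbf{Step 2.} Add these bounds over $i\le K$ and insert them into $N-M_N=\sum n_i$:
\[
 N-M_N\le\Gamma_KN+R_K+D\,B_N,\qquad D=\sum_i D_i/q_i.
\]
Then substitute $B_N\le 99(83N-114M_N)+31930$ from Theorem~\ref{M-thm:critical-stability}. Setting $\varepsilon_K=1-83/114-\Gamma_K>0$ and rearranging yields a linear inequality of the form
\[
 (1+99\cdot114D)M_N\le\bigl(1-\Gamma_K+99\cdot83D\bigr)N+\text{const},
\]
so
\[
 M_N\le\Bigl(\frac{83}{114}-\frac{\varepsilon_K}{1+11286D}\Bigr)N+C.
\]
The specific tiny values $\eta_{17},\eta_{18}$ and the constants $C_K$ in the table should come out of this arithmetic with the exact certified $D_i$. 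They may also come from a single combined certificate on a joint graph, which would give sharper constants. In that case I would present the result as a direct linear combination of certificate E, the bad-edge function $g$, and the $H_i$, and check it in exact rational arithmetic.

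\textbf{Step 3.} The Banach-density claim is immediate, since the bound is uniform over all intervals of length $N$.

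\textbf{Main obstacle.} The hardest part is Step 1. Off the critical graph, the high-color ages interact with noncritical low-color states. A naive penalty also charges at most one reward per bad edge, and that needs care: after a run of bad edges the process re-enters the critical graph at an arbitrary age. Because the critical product includes every starting age, re-entry at any age is covered. So the penalty per maximal bad run is bounded by the range of $H_i$ plus the rewards collected during the run, and the latter are at most $B$ per color. I would verify this run decomposition carefully and fall back on a computer-checked joint potential if the clean estimate fails to reproduce the stated $\eta_K$.
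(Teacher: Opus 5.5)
Your strategy is the paper's in substance: erase the high colors, count the noncritical low edges $B_N$, apply the critical-product capacities elsewhere with arbitrary incoming ages, then substitute $B_N\le99(83N-114M_N)+31930$. It does prove that the upper Banach density is below $83/114$. It does not, however, produce the stated $\eta_K$ and $C_K$.

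The reason is concrete. In Step~1 you let \emph{each} color $i$ collect a full reward on every bad edge: your $D_i$ contains $q_i$, and in the last paragraph you charge ``at most $B$ per color''. After summing, this gives $D=\sum_iD_i/q_i\ge(K-4)-\Gamma_K+R_K$. But a position carries only one color, so all high colors together gain at most $B_N$ on the removed edges. The paper uses this by cutting the interval into at most $B_N+1$ critical runs of total length $N-B_N$. It applies the summed run bound $\Gamma_KL+R_K$ to each run and adds $B_N$:
\[
 N-M_N\le\Gamma_KN+A_KB_N+R_K,\qquad A_K=1-\Gamma_K+R_K.
\]
In your potential language, keep the actual term $q_iw_i(e)b(e)$ on bad edges, and take the remaining penalty to be $\operatorname{range}(H_i)-p_i$. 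Then use $\sum_iw_i(e)\le1$ when you sum over $i$. This gives $A_K$ exactly, and no joint certificate is needed. Indeed $A_{17}=3961/228$ and $A_{18}=16867/912$ give precisely the tabulated values via
\[
 \eta_K=\frac{1-83/114-\Gamma_K}{11286A_K-1},\qquad C_K=\frac{31930A_K+R_K}{11286A_K-1}.
\]

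Your rearrangement also has a sign error. From
\[
 N-M_N\le\Gamma_KN+R_K+D(8217N-11286M_N+31930)
\]
one gets
\[
 (11286D-1)M_N\le(8217D+\Gamma_K-1)N+R_K+31930D.
\]
So the denominator is $11286D-1$, not $1+11286D$. The line you display would actually give $83/114+\varepsilon_K/(1+11286D)$. Your final bound happens to be a valid weakening of the correct one, so the qualitative conclusion is unaffected. With both corrections, the argument reproduces the theorem's constants exactly.
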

\begin{proof}
Let $B=B_N$ count noncritical edges after all high colors are replaced
by vacancies. The remaining critical edges form at most $B+1$ consecutive
runs of total length $N-B$. Apply the product bounds separately to each
run, with its actual incoming high-color ages. All high colors together
occupy at most $B$ positions on the removed edges. Therefore
\[
 N-M_N\le\Gamma_K N+(1-\Gamma_K+R_K)B+R_K.
\]
Put $A_K=1-\Gamma_K+R_K$ and use~\eqref{M-eq:critical-stability}.
Rearrangement gives
\[
 M_N\le
 \left(\frac{83}{114}-
 \frac{1-83/114-\Gamma_K}{11286A_K-1}\right)N
 +\frac{31930A_K+R_K}{11286A_K-1}.
\]
Substitution of
$A_{17}=3961/228$, $A_{18}=16867/912$ gives the stated fractions.
The constants are uniform in the location of the interval, proving the
upper Banach-density assertion. These are restrictions on possible full
colorings; they do not by themselves exclude every $17$- or $18$-coloring.
\end{proof}

\section{A formula for every high-color capacity}\label{M-sec:all-high}

The finite table above extends to all high colors. Write $\gamma_i$ for
the maximum density of one color $i\ge5$ compatible with a critical
low-color arrangement. The same value is an upper bound along any common
interval sequence on which the low-color density tends to $83/114$.

\begin{theorem}\label{M-thm:all-high}
For every integer $i\ge5$,
\[
 \gamma_i=\frac{1}{6i+\kappa^{(4)}_{i\bmod19}},\qquad
 \frac1{\gamma_{i+19}}=\frac1{\gamma_i}+114,
\]
where the nineteen corrections are given below. Every value is attained
by an actual periodic partial coloring.
\begin{center}\small
\begin{tabular}{rcrcrcrc}
\toprule
$r$&$\kappa^{(4)}_r$&$r$&$\kappa^{(4)}_r$&$r$&$\kappa^{(4)}_r$&$r$&$\kappa^{(4)}_r$\\\midrule
0&$-19/3$&5&$-39/7$&10&$-3$&15&$-9/2$\\
1&$-6$&6&$-28/5$&11&$-9$&16&$-24/5$\\
2&$-39/8$&7&$-39/5$&12&$-18/5$&17&$-7$\\
3&$-33/5$&8&$-21/4$&13&$-27/4$&18&$-20/3$\\
4&$-54/7$&9&$-36/7$&14&$-8$&&\\\bottomrule
\end{tabular}
\end{center}
\end{theorem}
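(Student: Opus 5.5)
We reduce the infinite family to a finite computation by exploiting the periodic structure of the critical low-color graph. By Theorem~\ref{M-thm:critical-structure}, every critical cycle has length a multiple of $114$ and carries vacancy density $31/114$. For $i\ge5$, color $i$ sits only at vacancies, and by the remark opening Section~\ref{M-sec:critical-capacity} only the preceding occurrence matters. Hence a periodic placement of color $i$ on a critical arrangement is a sequence of gaps. Each gap is a critical-path segment running from one vacancy to a later vacancy, and its length $g$ must satisfy $d_{\D6}(0,g)>i$. By Lemma~\ref{M-lem:metric}, $d(0,g)=\lfloor g/6\rfloor+\min(r,7-r)$, so the admissible gap lengths form a union of residue windows modulo $6$ beyond roughly $6i-9$. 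The capacity $\gamma_i$ is therefore the reciprocal of the minimum mean gap over cycles of an auxiliary graph. Its nodes are the critical vacancy-vertices (finitely many, at most $612$). Its arcs join vacancies $u\to v$ reachable by a critical path whose length lies in the admissible set for $i$, weighted by that length.

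The first step is a shift lemma: increasing $i$ by $19$ raises the distance threshold by $19$. It therefore shifts the admissible gap set exactly by $6\cdot19=114$, because Lemma~\ref{M-lem:metric} gives $d(0,g+114)=d(0,g)+19$ whenever $g\ge0$. Critical paths between two given vertices of length $\ge L_0$ can have their lengths changed by $114$: append or remove one length-$114$ closed walk, available at every vertex by strong connectivity and the cycle length structure. So for $i$ beyond a threshold $i_0$, admissible $u\to v$ arcs for $i+19$ correspond bijectively to those for $i$, with every length increased by $114$. This holds provided the minimal optimal cycle for color $i$ uses exactly one gap per period in the relevant sense.

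The subtle point, and the one I expect to be the main obstacle, is that a cycle of $m$ gaps gains $114m$ in length rather than $114$. That alone would give $1/\gamma_{i+19}=1/\gamma_i+114$ only when optimal cycles have a single gap. The fractions $\kappa^{(4)}_r$ (e.g.\ $-39/8$) show that optimal cycles have several gaps. I would therefore phrase the recursion in terms of mean gap: if the mean gap for $i$ is $G$, every gap grows by $114$, so the mean gap becomes $G+114$. That is exactly $1/\gamma_{i+19}=1/\gamma_i+114$, and the multi-gap issue disappears.

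What remains is to justify, for $i$ large, that the set of achievable gap sequences maps bijectively under the $+114$ shift. This requires a uniform bound $L_0$ on the length at which critical reachability becomes "eventually periodic" modulo $114$, a finite computation on the two components. With that, the theorem reduces to verifying $\gamma_i$ for $i$ from $5$ through $i_0+18$ using product-graph certificates of the kind in Theorem~\ref{M-thm:critical-capacities}, which give an upper potential and an attaining closed walk. Attainment by periodic colorings then propagates: inserting one $114$-cycle into each gap of an optimal word for $i$ yields a legal optimal word for $i+19$. The recorded $\kappa^{(4)}_r=1/\gamma_i-6i$ is then constant along residue classes mod $19$ by the recursion.
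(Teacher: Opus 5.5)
Your proposal is correct in outline, but it takes a different route from the paper.

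\textbf{Your route.} You build an exact-reachability gap graph on critical vacancies. You first prove $1/\gamma_{i+19}=1/\gamma_i+114$ for $i$ beyond a threshold $i_0$. This uses $d(0,g+114)=d(0,g)+19$, together with the fact that period $114$ and strong connectivity make path existence for lengths $\ge L_0$ depend only on the length modulo $114$. You then obtain the closed form from finitely many directly computed values $5\le i\le i_0+18$.

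\textbf{The paper's route.} The paper proves the closed form directly. It relaxes every gap to $6i+c_r(p,q)$, the least legal gap congruent to the difference of the two vacancy phases modulo $114$. The minimum cycle mean of a fixed $36$-vertex phase graph for each residue $r$ then gives the upper bound $6i+\kappa^{(4)}_r$ for every $i\ge5$ at once, with no threshold. The $+114$ recursion is then immediate. The price is that this relaxation ignores reachability, so tightness needs separate input:
\begin{itemize}
\item the verified mixing property (every length $\ge114$ with the right phase difference is realized) gives attainment for $i\ge21$;
\item explicit words handle $5\le i\le20$.
\end{itemize}

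\textbf{What each buys.} Your graph is tight by construction at every $i$. However, you must certify $\gamma_i$ separately for each $i$ up to $i_0+18$ (roughly $39$ if one uses the mixing length $114$). In your version the table entries are verified data, whereas the paper identifies them as minimum cycle means of the phase offsets.

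\textbf{Two minor points.}
\begin{itemize}
\item Strong connectivity and cycle lengths divisible by $114$ do not by themselves give a closed walk of length exactly $114$ at every vertex, since a vertex might lie only on $228$-cycles. The paper checks this computationally. Your bijection for gap lengths $\ge L_0$ already handles propagation, so you should drop that justification.
\item The paper's proof also extends the bound to near-optimal low paths, by splitting at the $o(N)$ noncritical edges given by Theorem~\ref{M-thm:critical-stability}. You do not address this extension, though it follows routinely from a uniform per-run bound.
\end{itemize}
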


\begin{proof}
Assign to each vertex of a critical component its path-length phase
$\phi\in\mathbb Z/114\mathbb Z$. The verification data package checks
$\phi(v)=\phi(u)+1$ on every edge, and checks the following stronger
mixing property: a length-$114$ walk joins $u$ to $v$ if and only if
$\phi(u)=\phi(v)$. This is verified independently by Boolean adjacency
powering on the original labelled components. In particular every vertex
has a closed walk of length $114$.

It follows that any length $L\ge114$ joins any two vertices with phase
difference $L$. Indeed, write $L=114k+r$, $0\le r<114$, $k\ge1$.
Take any $r$-step walk to $z$, then $k-1$ closed $114$-walks at $z$,
and finally a $114$-walk to the prescribed endpoint. The last two
vertices have the same phase.

Let $S$ be the phases reached by vacancy-labelled edges. Each component
has $36$ such phases. By Lemma~\ref{M-lem:metric}, a positive gap between
consecutive occurrences of color $i\ge5$ is legal exactly when it is
$6i+k$ with
\[
 k\in\{-9,-8,-4,-3,-2,-1\}\quad\hbox{or}\quad k\ge1.
\]
As explained above, the second preceding occurrence is then farther than
$6i$, so consecutive-gap checks are sufficient.

For $p,q\in S$ and $r=i\bmod19$, let $c_r(p,q)$ be the smallest allowed
$k$ such that $6r+k\equiv q-p\pmod{114}$. It lies in $[-9,114]$.
This defines a complete weighted graph on $36$ phases. Its minimum cycle
mean is $\kappa^{(4)}_r$. The verification data package supplies rational means, integer
potentials and attaining cycles for all nineteen graphs in each component;
all $49248$ edges are regenerated and checked independently. Both
components give the same table.

For any sequence of consecutive high occurrences, the actual gaps obey
$d_j\ge6i+c_r(p_j,p_{j+1})$. The minimum-mean potential therefore gives
\[
 \sum_{j=1}^m d_j\ge m(6i+\kappa^{(4)}_r)-R_r
\]
with a constant $R_r$ depending only on the finite phase graph. An interval
containing $k$ high occurrences has $k-1$ such gaps, of total length at
most $N-1$. This proves the density upper bound, with a finite uniform
interval constant, for every $i\ge5$.

For attainment when $i\ge21$, take a minimum-mean phase cycle. At every
phase choose a vacancy edge $z_p\to v_p$. For an edge from phase $p$ to
phase $q$, put $d=6i+c_r(p,q)$. Since $d-1\ge116$, the mixing property
gives a length-$d-1$ low walk from $v_p$ to $z_q$; append the vacancy
edge and give that final position color $i$. Concatenating around the
phase cycle produces a closed actual low-history walk with legal high
gaps. If it has $m$ high occurrences, its period is
$m(6i+\kappa^{(4)}_r)$, proving equality. Increasing $i$ by $19$ inserts one
closed $114$-walk into each gap. The reduced denominator of $\kappa^{(4)}_r$
need not equal $m$; the period is integral by the phase congruences.

For $5\le i\le19$, the earlier attaining words give the same formula.
For $i=20$, an additional verified period-$228$ word has two occurrences,
giving $\gamma_{20}=1/114$. Its two gaps are different; a constant gap
$114$ would be forbidden. The independent phase verifier directly checks
all sixteen short-color words as well as one constructed word for each
residue $21\le i\le39$ in each component. Those finite checks support
the construction; the preceding length-$114$ mixing argument covers every
larger $i$.

Finally split a general near-optimal low path at its noncritical edges.
The stability theorem makes their number $o(N)$. The uniform critical-run
bound for a fixed $i$, summed over these runs, preserves the same limiting
upper bound. This proves the first paragraph's assertion with sparse defects.
\end{proof}

The period $19$ comes from $114/\gcd(114,6)$. The formula expresses an
eventual phase reduction of an unbounded age parameter, with the finite
initial range closed by actual words. It concerns a single high color at
a time; simultaneous high-color optima require additional constraints.

\section{Complete joint density regions}\label{M-sec:density-regions}

We can also solve joint optimization problems. All simultaneous densities
in this section use the same sequence of intervals whose lengths tend to
infinity; independently chosen
coordinatewise upper-density subsequences are not combined.

\begin{theorem}\label{M-thm:global-density-region}
Let $m$ be the combined density of colors $1,2,3,4$ and $x$ the density
of color $5$. The possible simultaneous limit pairs over all partial
packing colorings of $\D{6}$ form exactly the polygon
\begin{equation}\label{M-eq:global-density-polygon}
 \begin{gathered}
 m\ge0,\quad x\ge0,\quad m\le\frac{83}{114},\quad x\le\frac1{21},\\
 2m+x\le\frac32,\qquad131m+3x\le\frac{191}{2}.
 \end{gathered}
\end{equation}
Every real point is attained with bounded discrepancy on arbitrary
intervals. Every rational point is attained by a periodic partial coloring.
The exact upper frontier is
\[
f(m)=\begin{cases}
1/21,&0\le m\le61/84,\\
3/2-2m,&61/84\le m\le91/125,\\
191/6-(131/3)m,&91/125\le m\le83/114.
\end{cases}
\]
\end{theorem}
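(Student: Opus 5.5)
The plan has two directions, both organized around the vertices of the polygon~\eqref{M-eq:global-density-polygon}: an upper bound from integer certificates and a lower bound from explicit periodic constructions.

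First I compute the vertices. The constraints $x\le 1/21$ and $2m+x\le 3/2$ meet at $m=61/84$. The constraints $2m+x\le3/2$ and $131m+3x\le191/2$ meet at $m=91/125$, $x=3/125$. The last facet meets $m=83/114$ at
\[
x=\Bigl(\tfrac{191}{2}-\tfrac{131\cdot 83}{114}\Bigr)\Big/3=\tfrac{7}{171}=\gamma_5 .
\]
This agrees with Theorem~\ref{M-thm:critical-capacities}, which is a good consistency check. The polygon is therefore the convex hull of
\[
(0,0),\quad (0,1/21),\quad (61/84,1/21),\quad (91/125,3/125),\quad (83/114,7/171),\quad (83/114,0).
\]

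\textbf{Upper bounds.} Each facet inequality is a weighted count inequality for colors $1,\dots,5$, so I would prove it with Lemma~\ref{M-lem:cert}.
\begin{itemize}
\item $m\le 83/114$ is certificate E.
\item $x\le1/21$ follows directly: consecutive occurrences of color $5$ need gap at least $21$. Indeed, by Lemma~\ref{M-lem:metric} the distances $d(0,\delta)$ for $\delta\le 20$ are at most $5$, while $d(0,21)=3+3=6$.
\item For $2m+x\le 3/2$, I build the five-color history automaton for $D(1,6)$ with rewards $w_1=\dots=w_4=2$ and $w_5=1$. I compute its maximum cycle mean by Karp's algorithm and check that it equals $3/2$. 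The potential is the longest-path distance, and its integer edge inequalities are verified exhaustively.
\item For $131m+3x\le 191/2$, I use rewards $(131,131,131,131,3)$ and target mean $191/2$ in the same way.
\end{itemize}
Since the bounds are uniform over intervals, they hold for limits along any common interval sequence. The automaton will be larger than certificate E's, since color $5$ has horizon $30$. If that is too large, I would switch to the future-constraint quotient of Section~\ref{M-sec:future-model}, which transfers potentials exactly. Alternatively, the last facet could be derived from Theorem~\ref{M-thm:critical-stability} combined with the color-$5$ capacity on critical runs, but that gives the wrong slope, so a direct certificate is preferable.

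\textbf{Attainment.} For each nontrivial vertex I exhibit a periodic word and check it modulo its period.
\begin{itemize}
\item $(83/114,7/171)$ is the attaining word of Theorem~\ref{M-thm:critical-capacities} for $i=5$.
\item $(0,1/21)$ is color $5$ every $21$ positions.
\item $(61/84,1/21)$ and $(91/125,3/125)$ need explicit periodic words of periods dividing $84$ and $125$ respectively, found by searching for optimal cycles of the two facet automata.
\end{itemize}
Thinning (Lemma~\ref{M-lem:density-realization}(3)) gives $(0,0)$ and $(83/114,0)$, and more generally everything below a realized point. To fill the polygon with bounded discrepancy and periodicity at rational points, I use Lemma~\ref{M-lem:density-realization}(1). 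This requires a common history state through which all vertex words pass as closed walks. I would obtain it by inserting a fixed all-vacancy block of length exceeding $30$ into each word, where $30$ is the largest forbidden displacement. This resets the history to the empty state, but it lowers the densities slightly, so the words would no longer hit the vertices exactly.

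\textbf{Main obstacle.} The hard step is therefore twofold. One part is the computation for the two new certificates. The other is realizing exact points on the upper frontier (not just approximations) by convex combination. Thinning handles points strictly inside, but combinations along a frontier edge must lose nothing. For these I would find, for each edge, a history state shared by the two endpoint words' optimal cycles in the corresponding facet automaton's critical graph. Along $m=83/114$, Theorem~\ref{M-thm:critical-structure} already supplies such shared critical states. Real points on the frontier then follow from part~(1) with irrational block frequency.
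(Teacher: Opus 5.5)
Your outline matches the paper's proof. Certificate E and the gap-$21$ argument give the two caps. Two integer potentials on the low-history/color-$5$-age product, with rewards $(2,2,2,2,1)$ and $(131,131,131,131,3)$, give the sloping facets uniformly in the interval. On each sloping edge, a pair of explicit vertex words returning to a common history state feeds Lemma~\ref{M-lem:density-realization}(1), and thinning fills the rest. The paper does exactly this, with words of lengths $84,250$ for $AB$ and $342,250$ for $BC$.

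There is one concrete error to correct. The sloping facets meet at $x=\tfrac32-2\cdot\tfrac{91}{125}=\tfrac{11}{250}$, not $\tfrac{3}{125}$; indeed $131\cdot\tfrac{91}{125}+3\cdot\tfrac{11}{250}=\tfrac{191}{2}$. With your value, the six listed points do not span the polygon, and the period you target is wrong. A periodic word at $B$ needs period divisible by $250$, such as the paper's period-$250$ word with counts $(M,n_5)=(182,11)$. More generally, you need periods divisible by the denominators, not dividing them.

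Two smaller points. First, a shared history state on each sloping edge is not guaranteed a priori and must be exhibited by explicit words. Section~\ref{M-sec:component-competition} shows that when optimal cycles lie in different critical components, only ordinary-limit mixtures survive. So your $C$ word from Theorem~\ref{M-thm:critical-capacities} is usable only if it meets some $B$ word in a common state; otherwise pick a different $C$ word from the facet automaton's optimal cycles, as you suggest. Second, the edge $m=83/114$ needs no mixing at all, since thinning color $5$ in the $C$ word already covers it. The appeal to Theorem~\ref{M-thm:critical-structure} there is unnecessary.
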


\begin{proof}
The low bound follows from certificate E, and $x\le1/21$ from the minimum
legal gap for color $5$. Two potentials on the unrestricted low-history
and color-$5$-age product give
\begin{align}
 2M_N+n_5(N)&\le\frac32N+\frac{13}{2},\label{M-eq:global-low2}\\
 131M_N+3n_5(N)&\le\frac{191}{2}N+\frac{747}{2}.
 \label{M-eq:global-low131}
\end{align}
The product has $24816492$ vertices and $60028037$ edges. Both integer
potentials are checked by a separate transition reconstruction. All attaining
words are also checked directly in the infinite graph. These uniform interval bounds prove the outer
polygon for arbitrary simultaneous subsequential limits, including shifted
intervals. Their intersections give its three upper vertices
\[
 A=\left(\frac{61}{84},\frac1{21}\right),\quad
 B=\left(\frac{91}{125},\frac{11}{250}\right),\quad
 C=\left(\frac{83}{114},\frac7{171}\right).
\]

For the segment $AB$, the verification data package supplies two words $U,V$ of lengths
$84,250$ with respective counts $(M,n_5)=(61,4),(182,11)$.
They return to the same sufficient history: the last $6i$ low-color
positions for $1\le i\le4$ and the last color-$5$ age capped at $31$.
Both periodic extensions and their directly matching past histories are
verified without relying on a numerical graph vertex identifier. Thus
arbitrary concatenations are legal, without buffers. The words $U^rV^s$
have
\[
 N=84r+250s,\quad M=61r+182s,\quad n_5=4r+11s.
\]
Every rational interior $m$ on $AB$ is attained by clearing denominators
in $s/r=(84m-61)/(182-250m)$; the endpoint words handle $r=0$ or $s=0$.

For $BC$, another pair of shared-history words of lengths $342,250$ has
counts $(249,14),(182,11)$. Their concatenations have
\[
 N=342r+250s,\quad M=249r+182s,\quad n_5=14r+11s.
\]
Writing $\delta=83/114-m$, the ratio
$s/r=19494\delta/(1-14250\delta)$ attains every rational interior point
of $BC$. The verification data package gives the full four words, their common histories
and a short direct verifier; the larger graph searches used to locate them
are not needed to verify this construction.

Lemma~\ref{M-lem:density-realization}(1) supplies the real points of
both segments with uniformly bounded discrepancy. For
$0\le m\le61/84$, thin the low-colored occurrences of the word at
$A$, leaving color $5$ unchanged; then thin color $5$ to reach any
point below the frontier. Part (3) gives bounded discrepancy and,
for rational targets, periodic realizations. This includes both axes.
\end{proof}

\begin{theorem}\label{M-thm:global-color-five}
Every legal partial coloring satisfies
\begin{equation}\label{M-eq:global-color-five}
 n_5(N)\le\frac7{171}N+
 \frac{131}{3}\left(\frac{83}{114}N-M_N\right)+\frac{249}{2}.
\end{equation}
The coefficient $131/3$ is the least possible coefficient with this
intercept and any finite uniform boundary constant.
\end{theorem}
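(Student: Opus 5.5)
The inequality~\eqref{M-eq:global-color-five} is a rearrangement of~\eqref{M-eq:global-low131} from the proof of Theorem~\ref{M-thm:global-density-region}. Dividing that bound by $3$ gives
\[
 n_5(N)\le\frac{191}{6}N-\frac{131}{3}M_N+\frac{249}{2}.
\]
Next I would rewrite the right side around the vertex $C$. Since $\frac{131}{3}\cdot\frac{83}{114}=\frac{10873}{342}$ and $\frac{191}{6}=\frac{10887}{342}$, the difference is $\frac{14}{342}=\frac{7}{171}$. Hence
\[
 \frac{191}{6}N-\frac{131}{3}M_N=\frac7{171}N+\frac{131}{3}\left(\frac{83}{114}N-M_N\right).
\]
This is exactly~\eqref{M-eq:global-color-five}, and the constant $747/6=249/2$ matches. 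This part is a one-line arithmetic check, valid for every interval of every legal partial coloring, because the potential bound is uniform.

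For optimality of $131/3$, suppose the inequality held with a smaller coefficient $\lambda<131/3$, the same intercept $7/171$, and some finite constant $C'$. I would test it on the shared-history concatenations $U^rV^s$ realizing the segment $BC$ of Theorem~\ref{M-thm:global-density-region}. Alternatively, I would use the two periodic words of lengths $342$ and $250$ directly. The word at $B$, repeated $k$ times, has $N=250k$, $M=182k$ and $n_5=11k$. Then $\frac{83}{114}N-M=\left(\frac{83}{114}\cdot250-182\right)k=\frac{1}{57}k$. Its color-$5$ count exceeds $\frac7{171}N$ by $\left(11-\frac{1750}{171}\right)k=\frac{131}{171}k$. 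So the required bound reads
\[
 \frac{131}{171}k\le\frac{\lambda}{57}k+C',
\]
that is, $\lambda\ge 131/3-O(1/k)$. Letting $k\to\infty$ forces $\lambda\ge131/3$.

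The word at $B$ is a legitimate periodic partial coloring, verified in the proof of Theorem~\ref{M-thm:global-density-region}. Any finite block of it is therefore a legal interval, and no further construction is needed. The only point to check carefully, and the step I expect to be the main obstacle, is interpreting "with this intercept": the intercept $7/171$ is pinned to the point $C$, so every valid linear bound through $C$ must dominate the frontier at $B$. That forces the slope to be at least the slope of $BC$, namely $(7/171-11/250)/(83/114-91/125)$, which equals $-131/3$. I would verify this slope equality as a consistency check.
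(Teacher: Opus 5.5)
Your proposal is correct and matches the paper's proof: the inequality is \eqref{M-eq:global-low131} divided by $3$ and rearranged, and sharpness of $131/3$ comes from repeating the period-$250$ word at $B$. That word has low deficit $1/14250$ per position against an excess of $131/42750$ in color-$5$ density, so any fixed boundary constant is overwhelmed.
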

\begin{proof}
Equation~\eqref{M-eq:global-color-five} is
\eqref{M-eq:global-low131} divided by $3$ and rearranged.
The period-$250$ endpoint word has
\[
 \frac{83}{114}-\frac{91}{125}=\frac1{14250},\qquad
 \frac{11}{250}-\frac7{171}=\frac{131}{42750}.
\]
Applying any proposed inequality to increasingly many repeats of this
word eliminates its fixed boundary constant and forces the coefficient
to be at least $131/3$. This proves optimality.
\end{proof}
The complete period-$250$ sharpness word, with color counts
$(106,35,25,16,11)$, is printed in Supplement
Section~\ref*{S-supp:short-witnesses}.

\begin{theorem}\label{M-thm:three-high-region}
Along intervals on which the low-color density tends to $83/114$, every
common limit vector $(x_5,x_6,x_7)$ lies in
\begin{equation}\label{M-eq:three-high-region}
 \begin{gathered}
 x_i\ge0,\qquad x_5\le\frac7{171},\quad
 x_6\le\frac5{152},\quad x_7\le\frac5{171},\\
 2x_5+x_6\le\frac{13}{114},\qquad
 x_6+x_7\le\frac7{114}.
 \end{gathered}
\end{equation}
Every point in this polytope is an ordinary limiting density vector of
a partial coloring whose low projection lies entirely in the critical
graph. Its nonnegative support function is the maximum of the values at
\[
 a=(112,88,80)/2736,\qquad b=(111,90,78)/2736.
\]
\end{theorem}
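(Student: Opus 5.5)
The proof has two halves: uniform interval inequalities give the outer polytope, and explicit periodic words realize its vertices.

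\emph{Outer bound.} The single-color bounds $x_5\le7/171$, $x_6\le5/152$, $x_7\le5/171$ are the capacities $\gamma_5,\gamma_6,\gamma_7$ of Theorem~\ref{M-thm:critical-capacities}. For the two joint facets we need new certificates on products of the critical low-color graph with the relevant high-color ages. On the product with the color-$5$ and color-$6$ ages (each capped at $6i+1$), we would supply an integer potential $H$ satisfying
\[
 114\bigl(2w_5(e)+w_6(e)\bigr)\le13+H(v)-H(u)
\]
on every edge. Similarly, on the color-$6$/color-$7$ age product we would supply a potential for
\[
 114\bigl(w_6+w_7\bigr)\le7+H'(v)-H'(u).
\]
Both potentials are checked edge by edge, as in Theorem~\ref{M-thm:critical-capacities}. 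By Lemma~\ref{M-lem:cert} each gives a uniform interval bound on every critical run.

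To pass from critical arrangements to intervals whose low density merely tends to $83/114$, we follow the proof of Theorem~\ref{M-thm:low-high-tradeoff}. Theorem~\ref{M-thm:critical-stability} gives $B_N=o(N)$ noncritical edges. Splitting at these edges leaves at most $B_N+1$ critical runs. Applying the run bounds with their actual incoming ages, and charging at most $B_N$ high occurrences to the removed edges, costs $O(B_N)=o(N)$. The inequalities therefore hold for every common limit vector.

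\emph{Vertices and support function.} Intersecting the facets gives a polytope whose maximal vertices should be exactly $a$ and $b$. For $a=(112,88,80)/2736$ we have $x_5=7/171$, $2x_5+x_6=13/114$ and $x_6+x_7=7/114$. For $b=(111,90,78)/2736$ we have $x_6=5/152$ and the same two joint facets tight. The remaining vertices are obtained by lowering coordinates, for example lowering $x_7$ at $a$ gives $(112,88,0)/2736$ and lowering $x_5$ at $b$ gives $(0,90,78)/2736$. Hence every point of the polytope is dominated coordinatewise by a point of the segment $[a,b]$, and the support function for nonnegative directions is $\max(\langle c,a\rangle,\langle c,b\rangle)$.

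\emph{Realization.} We construct two periodic colorings, both of period $2736$, whose low projections are closed walks in the critical graph and which carry colors $5,6,7$ at densities $a$ and $b$. These are found by searching the three-age product restricted to one critical component and are verified directly in the infinite graph. Concatenation is legal without vacancy buffers only if the two walks share a full history state, including compatible high-color ages. We therefore insert connecting low-only paths of length at least $114$, using the mixing property from the proof of Theorem~\ref{M-thm:all-high}, with high colors omitted on the connectors. Inserting these connectors with sublinear total length, in the stage scheme of Lemma~\ref{M-lem:density-realization}(2), realizes every point of the segment $[a,b]$ as an ordinary limit while keeping the low projection entirely critical. Thinning the high colors, as in Lemma~\ref{M-lem:density-realization}(3), then fills the rest of the polytope, since every point is dominated by a point of $[a,b]$.

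The main obstacle is finding the two attaining words, which need a joint search over three ages of size about $6i$ on a critical component. The potential for the facet $2x_5+x_6\le13/114$ is a related difficulty. If the full three-age product is too large, I would search instead on the phase-reduced graph of Theorem~\ref{M-thm:all-high}, with states given by pairs of phases and capped gaps.
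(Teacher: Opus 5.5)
Your proposal is correct and follows essentially the same route as the paper. Both use the single-color capacities and two joint critical-product potentials for the outer bound, and transfer it via splitting at the $o(N)$ noncritical edges from the stability theorem. Both show domination by the segment $[a,b]$, and realize it with two attaining words in one common critical component (the paper's have periods $342$ and $912$), joined by low-only critical connectors, followed by growing blocks and thinning.

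The differences are minor. The paper proves domination directly by writing $x_6=(88+2\theta)/2736$, whereas you enumerate vertices, and you only sketch that enumeration. Connectors need only exceed the largest forbidden high displacement $42$, which your length-$114$ connectors do satisfy, although you do not say this is the reason the high-color seams are legal. Your fallback search on the phase-reduced graph would not work for colors $5$--$7$, whose gaps are too short for the length-$114$ mixing lift; your main plan does not need it.
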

\begin{proof}
The coordinate bounds are the single-color capacities. Independent
integer products on the critical graph give
\[
 2n_5+n_6\le\frac{13}{114}N+\frac{87}{19},\qquad
 n_6+n_7\le\frac7{114}N+\frac{45}{19}.
\]
They have respectively $806300$ and $1101272$ edges, all independently
regenerated. For a high reward with critical-run bound $pN/q+r$ and
maximum one-position reward $W$, splitting at $B_N$ noncritical edges gives
\[
 \operatorname{reward}(I)\le\frac pq N+
 \left(W-\frac pq+r\right)B_N+r.
\]
The stability theorem makes $B_N=o(N)$ on the stipulated intervals,
so the inequalities pass to arbitrary near-optimal arrangements, not just
periodic critical paths.

The verification data package gives simultaneous attaining words of lengths $342$ and
$912$, with high counts $(14,11,10)$ and $(37,30,26)$, yielding $a,b$.
Their low histories belong to the same critical component. In units
$1/2736$, any point with sixth coordinate at most $88$ is dominated by
$(112,88,80)$. Otherwise put $\theta=(2736x_6-88)/2\in[0,1]$.
The two joint inequalities show that it is coordinatewise dominated by
\[
 (112-\theta,88+2\theta,80-2\theta)/2736
 =(1-\theta)a+\theta b.
\]
Thus the polytope is exactly the coordinatewise downward closure of this
segment, which proves its stated support function.

To realize the segment, connect the two low histories within their
common critical component and use low-only connecting blocks longer
than $42$, the largest forbidden high-color displacement. The low
projection stays critical and all high-color seams are legal.
The increasing-block construction in Lemma~\ref{M-lem:density-realization}(2)
then gives ordinary densities on both half-lines. Critical color
identities retain low density $83/114$, and part (3) gives the
coordinatewise downward closure. This construction supplies ordinary
limits, without requiring periodicity or bounded interval discrepancy.
\end{proof}

These exact regions identify limits of reweighting the indicated
coordinates. They do not assert that several individually optimal high
colors can be attained simultaneously in a complete coloring, and they
do not raise the current chromatic lower bound of $\D{6}$ beyond $17$.

\section{Colors 1--5 with one additional color}
\label{M-sec:six-labels}

We now optimize the joint density of colors $1,\ldots,5,i$, rather than
requiring the first five colors to remain optimal on their own. Put
$M_5(I)=\sum_{j=1}^5 n_j(I)$ and $N=|I|$. Theorem~\ref{M-thm:six-label-optima} treats $i=6,7,8$.
These are six positive colors; the vacancy symbol $0$ is not counted as a color.

\begin{theorem}\label{M-thm:six-label-optima}
For $i=6,7,8$, the exact maximum joint density of the color set
$\{1,2,3,4,5,i\}$ in a partial packing coloring of $\D{6}$ is,
respectively, $17/21$, $101/126$, and $79/99$. More precisely,
\begin{align}
 M_5(I)+n_6(I)&\le(17N+88)/21,\label{M-eq:six-total}\\
 M_5(I)+n_7(I)&\le(101N+547)/126,\label{M-eq:seven-total}\\
 M_5(I)+n_8(I)&\le(79N+430)/99.\label{M-eq:eight-total}
\end{align}
The corresponding periodic attaining words have lengths $84,252,198$.
The second set omits color $6$, and the third omits both $6$ and $7$.
\end{theorem}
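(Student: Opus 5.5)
The plan is to prove each of the three inequalities by one application of the uniform certificate principle, Lemma~\ref{M-lem:cert}, and then to show sharpness with an explicit periodic word. For a fixed $i\in\{6,7,8\}$, take the selected colour set $C=\{1,2,3,4,5,i\}$ and give every occurrence of a selected colour reward $1$. I would build the future-constraint graph of Section~\ref{M-sec:future-model}, because it is smaller than the history graph and has exactly the same legal words. The forbidden sets $F_c$ come from Lemma~\ref{M-lem:metric}. By the quotient argument in that section, a potential on the forbidden-mask graph yields the same bound that a potential on the history graph would. The largest horizon is $6i\le48$. For the high colours $5$ and $i$, the remark in Section~\ref{M-sec:critical-capacity} lets me record only the age of the most recent occurrence. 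This keeps the state space close to the size of the products already handled for Theorem~\ref{M-thm:global-density-region}.

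For each automaton I would run Karp's algorithm to find the maximum cycle mean. I expect it to be $17/21$, $101/126$ and $79/99$ respectively, so that $(p,q)=(17,21),(101,126),(79,99)$. As explained after Lemma~\ref{M-lem:cert}, I would take the longest-path distances from the empty state under edge lengths $qw(e)-p$. These are integers with $h_0=0$, and I would read off the ranges $H=88$, $547$ and $430$. Substituting into \eqref{M-eq:cert} gives \eqref{M-eq:six-total}, \eqref{M-eq:seven-total} and \eqref{M-eq:eight-total} uniformly over intervals. I would then recheck every edge inequality independently in integer arithmetic, regenerating the transitions from occurrence-age lists.

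For attainment, I would extract from each automaton a zero-slack cycle. Its lengths should be $84$, $252$ and $198$, with $68$, $202$ and $158$ selected positions respectively. I would verify each periodic extension directly in $D(1,6)$, checking every forbidden displacement modulo the period as in the certificate-data subsection. This shows the densities are attained, so the maxima are exact. The remark that the second set omits colour $6$ and the third omits $6$ and $7$ is a definitional point: the colours absent from $C$ are simply vacancies in the words.

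I expect the main obstacle to be computational scale rather than anything conceptual. Certificate E alone already has about $8\times10^5$ states, and adding colour $5$ and colour $i$, especially $i=8$ with horizon $48$, could multiply this considerably. My first attempt would be the forbidden-mask recognizer, with the high colours tracked by capped ages. If that is still too large, I would prune to states that are coreachable in a productive way before running the cycle-mean computation. The certificate itself stays easy to check once it has been found.
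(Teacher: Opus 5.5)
Your proposal is correct and takes essentially the paper's approach. Both attach capped last-occurrence ages for colors $5$ and $i$ (justified by $2(6j-9)>6j$) to a complete low-four model, certify $p/q=17/21,\,101/126,\,79/99$ with integer potentials of ranges $88,547,430$ via the uniform certificate lemma, and check periodic words of lengths $84,252,198$ (with $68,202,158$ selected positions) directly in the infinite graph. The paper's main proof builds the product on the $800532$-state history graph and records your future-constraint version, with pushed-down potentials, in the supplement; this difference is immaterial.
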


\begin{proof}
Attach to the complete $800532$-state low-history graph the ages of colors
$5$ and $i$, capped at $31$ and $6i+1$. For every $j\ge5$, the allowed
positive gaps between consecutive occurrences of color $j$ are exactly
\[
 \{6j-9,6j-8,6j-4,6j-3,6j-2,6j-1\}\ \cup\ [6j+1,\infty)\cap\mathbb Z.
\]
Since $2(6j-9)>6j$, checking the last occurrence suffices: two older
occurrences cannot conflict with the new one. At every output both ages
advance; only the emitted high color resets to $1$. A vacant low edge
may receive at most one high color. Thus the Cartesian product covers
every legal partial coloring, including noncritical low arrangements.
It is harmless to include tuples that do not encode a common actual past.

For reward $w=1$ on each selected positive label, the independent checker
verifies $qw(e)\le p+H(v)-H(u)$ on every product edge. The data are
\[
\begin{array}{c|r|r|c|r}
 i&\text{states}&\text{edges}&p/q&\operatorname{range}(H)\\\hline
 6&918210204&2394752813&17/21&88\\
 7&1067109156&2754921035&101/126&547\\
 8&1216008108&3115089257&79/99&430
\end{array}
\]
Telescoping proves the three finite-interval bounds. The checker rebuilds
the new age transitions from graph-distance balls, using outgoing edges;
the GPU discovery routine instead computes incoming neighbors implicitly.
The complete low graph is the previously reconstructed dependency.

The attaining words below have counts $(n_0,n_1,n_2,n_3,n_4,n_5,n_i)$ equal to
$(16,36,12,8,5,4,3)$, $(50,108,36,24,15,12,7)$, and
$(40,84,28,20,12,9,5)$, respectively. Their infinite periodic extensions
are checked directly against all forbidden signed displacements.
\end{proof}

\begin{theorem}\label{M-thm:sharp-five-eight}
Every interval in a partial packing coloring of $\D{6}$ satisfies
\begin{align}
 4M_5(I)+3n_8(I)&\le(19N+80)/6,\label{M-eq:five-eight-facet}\\
 n_8(I)&\le\frac N{42}+\frac43\left(\frac{65N}{84}-M_5(I)\right)
                    +\frac{40}{9}.\label{M-eq:five-eight-deficit}
\end{align}
The coefficient $4/3$ is the least possible with any fixed boundary
constant. For every simultaneous ordinary limiting low density
$17/22\le m_5\le65/84$, the exact maximum color-$8$ density is
\[
 f_8(m_5)=\frac1{42}+\frac43\left(\frac{65}{84}-m_5\right).
\]
\end{theorem}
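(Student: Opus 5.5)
The plan is to prove \eqref{M-eq:five-eight-facet} by one more certificate of the type in Lemma~\ref{M-lem:cert}, obtain \eqref{M-eq:five-eight-deficit} from it by algebra, and then settle sharpness and the exact frontier with two periodic words.

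\textbf{The certificate.} I would reuse the product graph from the proof of Theorem~\ref{M-thm:six-label-optima} for $i=8$. This is the complete $800532$-state low-history graph, together with the age of color $5$ capped at $31$ and the age of color $8$ capped at $49$. As argued there, checking the last occurrence suffices for each high color, since $2(6j-9)>6j$. So this product covers every legal partial coloring.

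On this graph, give reward $4$ to each occurrence of colors $1,\dots,5$ and reward $3$ to each occurrence of color $8$. Then I would compute the maximum cycle mean by Karp's algorithm and expect it to be $19/6$. With $q=6$ and $p=19$, the longest-path potential from the empty state should be an integer function $H$ satisfying
\[
6w(e)\le 19+H(v)-H(u)
\]
on every edge, with range $80$. This range is what the stated constant $80/6$ requires. Telescoping, as in Lemma~\ref{M-lem:cert}, then gives \eqref{M-eq:five-eight-facet} uniformly over all intervals.

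\textbf{The second inequality.} Dividing \eqref{M-eq:five-eight-facet} by $3$ gives
\[
n_8\le \tfrac{19}{18}N-\tfrac43M_5+\tfrac{40}{9}.
\]
Since $\tfrac1{42}+\tfrac43\cdot\tfrac{65}{84}=\tfrac{133}{126}=\tfrac{19}{18}$, this is exactly \eqref{M-eq:five-eight-deficit}.

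\textbf{Sharpness of the coefficient $4/3$.} I would use the period-$198$ word from Theorem~\ref{M-thm:six-label-optima}. It has $M_5=153$ and $n_8=5$, so $m_5=17/22$ and $x_8=5/198$. Its low deficit is
\[
\tfrac{65}{84}-\tfrac{17}{22}=\tfrac1{924},
\]
and its excess of color $8$ is
\[
\tfrac5{198}-\tfrac1{42}=\tfrac1{693}=\tfrac43\cdot\tfrac1{924}.
\]
Suppose an inequality of the form \eqref{M-eq:five-eight-deficit} held with intercept $1/42$, coefficient $c$ and some fixed boundary constant. Applying it to $k$ repeats of this word, dividing by $N$ and letting $k\to\infty$ would force $c\ge4/3$, exactly as in the proof of Theorem~\ref{M-thm:global-color-five}.

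\textbf{The exact frontier.} For a simultaneous ordinary limiting pair with $17/22\le m_5\le65/84$, the upper bound $x_8\le f_8(m_5)$ follows by dividing \eqref{M-eq:five-eight-deficit} by $N$ and passing to the limit. Attainment needs two endpoint words.
\begin{enumerate}
\item The right endpoint needs a periodic word with $M_5/N=65/84$ and $n_8/N=1/42$, for example of length $84$ with $M_5=65$ and $n_8=2$. I would look for it in the product graph among cycles of mean exactly $19/6$ lying on the low-five optimal face.
\item The left endpoint is the $198$-word above.
\end{enumerate}
Both points lie on the line $4m_5+3x_8=19/6$. Lemma~\ref{M-lem:density-realization}(2) then realizes every convex combination with ordinary limits; the buffers cost only $o(N)$, which is harmless here. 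I also need $m_5\le65/84$ as a global bound, which I would check or cite from the low-five certificate.

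\textbf{Main obstacle.} The difficult part is computational rather than conceptual. The certificate lives on a product with about $1.2\cdot10^9$ states. Confirming that the exact maximum mean is $19/6$ with range $80$, and that the potential is independently checkable edge by edge, is the heaviest step. Finding the period-$84$ endpoint word with color $8$ at density exactly $1/42$ is the other nontrivial search; I would try first among the critical cycles of the $19/6$ certificate.
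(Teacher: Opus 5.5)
Your proposal matches the paper's proof essentially step for step. It uses the same reward-$(4,3)$ potential with $p/q=19/6$ and range $80$ on the color-$8$ product from Theorem~\ref{M-thm:six-label-optima}, and the same division by $3$. It also uses the same period-$198$ sharpness word and Lemma~\ref{M-lem:density-realization}(2) with vacancy buffers for the segment; the paper uses $48$ vacancies. The only piece you leave as a search is the period-$84$ endpoint with $(M_5,n_8)=(65,2)$, and the paper supplies it directly as the printed word $W_*$, so no further computation is needed there.
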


\begin{proof}
On the same color-$8$ product, put reward $4$ on colors $1$--$5$ and $3$
on color $8$. A second integer potential has $p/q=19/6$ and range $80$.
All $3115089257$ edge inequalities are independently checked. Telescoping
and rearranging give the two displayed bounds.

The maximum density of colors $1$--$5$ is $65/84$, also obtained by
maximizing $m+x$ in Theorem~\ref{M-thm:global-density-region}.
At this low density, the new bound gives $x_8\le1/42$; the word $W_*$
below has period $84$ and counts $(M_5,n_8)=(65,2)$, so equality holds.
The $198$-word has
\[
 (m_5,x_8)=\left(\frac{17}{22},\frac5{198}\right),\qquad
 \frac{65}{84}-m_5=\frac1{924},\quad
 x_8-\frac1{42}=\frac1{693}.
\]
Repeating it rules out every coefficient smaller than
$(1/693)/(1/924)=4/3$, whatever the fixed boundary constant.

The endpoint words have lengths $84$ and $198$ and share common
multiple $2772$. Lemma~\ref{M-lem:density-realization}(2), with $48$
vacancies between growing periodic blocks, realizes every convex
combination with ordinary limits. Positions across a buffer are
separated by at least $49>6\cdot8$, so all constraints hold.
\end{proof}

In particular, keeping the low-five density maximal allows joint density
only $65/84+1/42=67/84$. Its unrestricted maximum is larger:
\[
 \frac{79}{99}-\frac{67}{84}=\frac1{2772}>0.
\]
Thus optimizing the low layer first and then filling its vacancies need
not solve the joint problem. Here the exact compensation rate explains
the improvement: sacrificing $1/924$ of low density gains $1/693$ of
color-$8$ density. These partial-coloring results do not themselves give
a full $17$-coloring or rule one out; Technical Supplement~\ref*{S-app:limits} evaluates the resulting
density relaxation.

Supplement Section~\ref*{S-supp:short-witnesses} prints the four
equality words $W_6,W_7,W_8,W_*$ used here, with each row sequence
belonging to one complete period.

\section{The optimal five-color layer}
\label{M-sec:base-five-layer}

The maximum joint density of colors $1$--$5$ is $65/84$. Changing
which low layer is optimized changes both the phase period and the
interactions between higher colors.

\Needspace{13\baselineskip}
\begin{theorem}\label{M-thm:base-five-all-high}
For every $i\ge6$, the exact maximum color-$i$ density compatible with
limiting low-five density $65/84$ in $\D{6}$ is
\[
 \gamma_i^{(5)}=\frac1{6i+\kappa^{(5)}_{i\bmod14}},
 \qquad \frac1{\gamma_{i+14}^{(5)}}=\frac1{\gamma_i^{(5)}}+84,
\]
where the fourteen offsets, in residue order $0,\ldots,13$, are
\[
 \left(0,-6,-6,-\frac{15}{2},-\frac{36}{5},-9,-8,
       -6,-6,-6,-\frac{15}{2},-\frac{36}{5},-9,-6\right).
\]
Every value is attained by a periodic partial coloring. The upper bound
applies along any common sequence of growing intervals on which the
low-five density tends to $65/84$.
\end{theorem}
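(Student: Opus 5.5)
The plan is to repeat the proof of Theorem~\ref{M-thm:all-high}, with the critical graph of the low-four layer replaced by the critical graph of the low-five layer at density $65/84$.

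\emph{Certificate and stability.} First I need a certificate for colors $1$--$5$ with $p/q=65/84$. I would build it on the product of the low-four history graph with a color-$5$ age capped at $31$, the same product used in Theorem~\ref{M-thm:global-density-region}. The potential should make the joint count satisfy $M_5\le\frac{65}{84}N+O(1)$; it can be obtained by maximizing $m+x$ over the polygon~\eqref{M-eq:global-density-polygon}, or directly as a longest-path potential. From its zero-slack subgraph I would extract the critical graph, meaning the union of the optimal cycles. Exactly as in Theorems~\ref{M-thm:critical-structure} and~\ref{M-thm:critical-stability}, I would give an integer function $g$ with $b(e)\le c\,s(e)+g(v)-g(u)$. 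This yields a stability bound of the form $B_N\le c(65N-84M_5)+C$. Consequently, along any interval sequence on which the low-five density tends to $65/84$, the number of noncritical edges is $o(N)$. The splitting argument from the end of the proof of Theorem~\ref{M-thm:all-high} then reduces the upper bound to bounds on critical runs, with only $o(N)$ loss.

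\emph{Phase reduction.} Next I would check that every critical component carries a phase $\phi\in\mathbb Z/84\mathbb Z$ with $\phi(v)=\phi(u)+1$ on each edge. I would also check the mixing property: a walk of length $84$ joins $u$ to $v$ if and only if $\phi(u)=\phi(v)$. Both checks are done by Boolean matrix powering, as before. The period $14=84/\gcd(84,6)$ then explains the residue classes.

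By Lemma~\ref{M-lem:metric} and the argument at the start of Section~\ref{M-sec:critical-capacity}, color $i\ge6$ is legal exactly when each consecutive gap has the form $6i+k$ with $k\in\{-9,-8,-4,-3,-2,-1\}$ or $k\ge1$. Here the admissible high positions are the vacancy phases $S$. One point needs attention: the vacancies of the low-five layer are those left after color $5$ is placed, so the state must include the color-$5$ age. I would then build, for each residue $r=i\bmod 14$, the complete weighted phase graph on $S$. Its edge weight $c_r(p,q)$ is the least allowed $k$ with $6r+k\equiv q-p \pmod{84}$. I would compute its minimum cycle mean and check that it equals the listed $\kappa^{(5)}_r$, certifying the value with an integer potential. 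The telescoping gap inequality $\sum d_j\ge m(6i+\kappa^{(5)}_r)-R_r$ then gives $\gamma^{(5)}_i\le 1/(6i+\kappa^{(5)}_r)$ with a uniform constant, for every $i\ge6$.

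\emph{Attainment.} Once $6i+\kappa-10$ exceeds $84$, say for $i\ge 16$, the mixing property lets me fill every gap of a minimum-mean phase cycle with a low critical walk ending at a vacancy edge, as in the earlier proof. Adding $14$ to $i$ inserts one closed $84$-walk into each gap, which gives the recursion $1/\gamma_{i+14}^{(5)}=1/\gamma_i^{(5)}+84$. The finitely many small indices, roughly $6\le i\le 15$ and whichever ones are not covered by the mixing length, would be handled by explicit periodic words found by searching the product graph and verified directly in the infinite graph. Residue $0$ has offset $0$, so the value $1/(6i)$ should in particular be attained there.

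\emph{Main obstacle.} I expect the first step to be the hardest. I need the exact $65/84$ critical graph together with a verified stability function on a product graph with roughly $2.5\times10^7$ vertices, and I must confirm that all critical components yield the same offset table. If they do not, the capacity is the maximum over components, and the table has to be read that way.
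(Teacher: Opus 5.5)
Your proposal is correct and follows essentially the same route as the paper. Both arguments use the low-five certificate on the color-$5$ age product and the complete critical graph, with a stability bound giving $o(N)$ noncritical edges along near-optimal intervals. Both then use phase-$84$ mixing and minimum-cycle-mean phase graphs for each residue modulo $14$, lift the optimal phase cycles for $i\ge16$, rely on directly checked words for $6\le i\le15$, and insert closed $84$-walks to obtain the recursion.

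The differences are cosmetic. The paper certifies the critical part with a rank function that is constant on each of the twelve critical components and strictly decreases on every other zero-slack edge; this is your inequality with $g=-\mathrm{rank}$ and $c=K$. The components do in fact have different offsets (compare Section~\ref{M-sec:component-competition}), so the table is the minimum offset over all twelve phase graphs, as you anticipated.
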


\begin{proof}
The complete low-five graph has $24816492$ vertices and $60028037$
edges. Its integer potential, with range $301$, satisfies
\[
 s(e)=65+H(v)-H(u)-84w_5(e)\ge0.
\]
An independent reconstruction of every edge proves that the complete
zero-slack cyclic part consists of ten components with $115$ vertices
and $116$ edges and two directed $84$-cycles. The $1328$ internal
edges have constant component rank; all other $7385531$ zero-slack
edges strictly decrease a supplied rank. Independent forward and reverse
searches verify strong connectivity. These facts exclude any missing
zero-slack cycle without relying on the discovery SCC algorithm.

Each component has period $84$. Exact Boolean adjacency powers verify
that a walk of length $84$ connects two vertices precisely when their
phases agree. Hence every compatible length $L\ge84$ is possible:
take any $L-84$ steps and then the certified $84$-step connection.
The vacant-edge ending phases form sets of size $21$ in the larger
components and size $19$ in the simple cycles.

For phases $x,y$ and residue $r$, let $k_r(x,y)$ be the least member
of $\{-9,-8,-4,-3,-2,-1\}\cup\mathbb Z_{\ge1}$ for which
$6r+k_r(x,y)\equiv y-x\pmod{84}$. The least allowed gap in that
phase class for color $i\equiv r\pmod{14}$ is $6i+k_r(x,y)$.
For each residue, integer potentials on all twelve finite phase graphs
and one attaining cycle certify that their minimum cycle mean is
$\kappa^{(5)}_r$. All $14(10\cdot21^2+2\cdot19^2)=71848$ edge
inequalities are checked exactly.

Write $\kappa^{(5)}_r=p_r/q_r$, $B_i=6iq_r+p_r$, and let $R_r$ be the
largest phase-potential range for that residue. For $k$ high occurrences
in a critical run of length $N$, summing its $k-1$ intervening gap
inequalities gives
\[
 (k-1)B_i\le q_r(N-1)+R_r.
\]
Thus $k\le\gamma_i^{(5)}N+2$ uniformly. For $i=6,\ldots,13$,
the recorded ranges are at most $21$ and $B_i\ge28$; for $i\ge14$,
they are at most $74$ and $B_i\ge75$. These imply the asserted
uniform boundary, including $k=0$.

For $i\ge16$, every required gap minus one is at least $86$, so the
phase cycles lift to actual critical walks using the mixing property.
Mark the chosen vacant edge at each gap endpoint with color $i$.
Nonconsecutive occurrences cannot conflict, and the closed base walk
has density $65/84$. This attains $\gamma_i^{(5)}$. Directly checked
words handle $6\le i\le15$. Increasing $i$ by $14$ inserts one
extra $84$-step base loop per gap, proving the all-parameter recurrence.

Finally let $K$ bound the number of component-rank levels. On an
arbitrary path with low-five count $M_5$, the number of positive-slack
edges is at most $65N-84M_5+301$. Rank constancy inside critical
components and strict descent outside imply that the number $B$ of
noncritical edges satisfies
\[
 B\le K(65N-84M_5+301)+K-1.
\]
At optimal limiting low-five density this is $o(N)$. Splitting at these
edges leaves at most $B+1$ critical runs; the uniform $+2$ bound on each
run and at most one high occurrence per bad edge preserve the same
limiting upper capacity. This proves the theorem for arbitrary common
intervals, not just exactly critical periodic arrangements.
\end{proof}

The capacities for $6$--$17$ satisfy
\[
 \sum_{i=6}^{17}\gamma_i^{(5)}
 =\frac{23227}{105840}=\frac{19}{84}-\frac{713}{105840}.
\]
A complete $17$-coloring
therefore cannot attain this low-five optimum. Together with the new
construction, completion of the optimal five-color layer needs between
$18$ and $20$ colors. This is a conditional conclusion; lower
low-five densities remain possible in unrestricted colorings.

\begin{theorem}\label{M-thm:base-five-box}
At low-five density $65/84$, the complete simultaneous density region
for colors $6,7,8$ is the box
\[
 [0,1/28]\times[0,1/36]\times[0,1/42].
\]
Every real point has an actual realization with uniformly bounded
interval discrepancy. Every rational point has a periodic realization.
\end{theorem}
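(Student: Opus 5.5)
The upper bounds come directly from Theorem~\ref{M-thm:base-five-all-high}; the real content is attaining the corner of the box. With residues $6,7,8$ modulo $14$ the offsets are $\kappa^{(5)}=-8,-6,-6$, so
\[
\gamma_6^{(5)}=\frac1{28},\qquad \gamma_7^{(5)}=\frac1{36},\qquad \gamma_8^{(5)}=\frac1{42}.
\]
These are the single-color capacities at limiting low-five density $65/84$, and they hold along any common interval sequence. Every common limit vector $(x_6,x_7,x_8)$ therefore lies in the stated box, and no joint inequality is needed. It remains to show that the whole box is attained, and for this it suffices to realize the corner $(1/28,1/36,1/42)$ simultaneously on top of one critical low-five arrangement.

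For the corner I would look for a single periodic word on a critical low-five walk carrying colors $6,7,8$ at these densities. The natural period is a common multiple of $84$, $28$, $36$ and $42$, namely $252=3\cdot84$. In one period, color $6$ has $9$ occurrences, color $7$ has $7$, and color $8$ has $6$. The low-five layer leaves $19$ vacancies per $84$ positions, so a period has $57$ vacancies. Since $9+7+6=22\le57$, counting alone does not obstruct the construction. The real constraint is phase compatibility. Each high color must sit in vacant phases whose consecutive gaps realize that color's minimum-mean phase cycle, and the three colors must use disjoint vacant positions.

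I would search in the product of one critical component with the three capped ages. This is the critical graph of the low-five optimum described in the proof of Theorem~\ref{M-thm:base-five-all-high}, with ages capped at $6i+1$; its size is modest because the component has only $115$ vertices. I would then exhibit a closed walk of length $252$, or a small multiple, with reward $(9,7,6)$, and verify its periodic extension directly against all forbidden displacements in $D(1,6)$. I expect this step to be the main obstacle. The individual optimal phase cycles may interfere, since two colors could want the same vacancy, and a proof would rest on an explicit checked witness rather than a conceptual argument. If one component fails, I would try the other large components and the two simple $84$-cycles. The vacant-phase sets differ between components, so one of them may accommodate all three colors.

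Given the corner word $W$, the rest follows from Lemma~\ref{M-lem:density-realization}(3), thinning colors $6,7,8$ independently while leaving the low layer untouched. Low density stays exactly $65/84$ because only high occurrences are deleted. Every real point of the box then gets a realization with uniformly bounded discrepancy, since the periodic word has bounded discrepancy and balanced thinning preserves this. Every rational point gets a periodic realization by taking enough repetitions of $W$ to make the target counts integral and selecting those occurrences. This would also explain the contrast with Theorem~\ref{M-thm:three-high-region}, where the joint facets appear only for the four-color optimum.
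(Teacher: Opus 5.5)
Your plan matches the paper's proof. The upper faces of the box are the single-color capacities $1/28,1/36,1/42$ from Theorem~\ref{M-thm:base-five-all-high}. The whole box then follows from one periodic corner word on the optimal low-five layer, together with independent balanced thinning of colors $6,7,8$. The witness you propose to search for is exactly the period-$252$ corner word printed in the supplement, with counts $(n_0,\dots,n_8)=(35,108,36,24,15,12,9,7,6)$ and all $4104$ signed forbidden-displacement comparisons checked, so the step you flagged as the main obstacle is supplied there.
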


\begin{proof}
The coordinate upper bounds follow either from the preceding theorem
or by substituting $M_5/N\to65/84$ in
\eqref{M-eq:six-total}, \eqref{M-eq:seven-total} and
\eqref{M-eq:five-eight-facet}. The $252$-period word in Supplement
Section~\ref*{S-supp:short-witnesses} has counts
\[
 (n_0,n_1,\ldots,n_8)=(35,108,36,24,15,12,9,7,6).
\]
All $4104$ signed forbidden-displacement comparisons pass, so it
attains the simultaneous upper corner. Independently thinning the three
high-color occurrence sequences preserves legality. Balanced occurrence
selection has uniformly bounded rounding error, and the original word
is periodic, proving the assertion for real targets. For rational
targets, clear denominators in a common period and retain the specified
integer numbers of occurrences of each color.
\end{proof}

The complete $252$-period corner word is in Supplement
Section~\ref*{S-supp:short-witnesses}.
Its color-$8$ gaps alternate $45$ and $39$; a constant gap of $42$
would be forbidden. The box differs from the nonrectangular region at
optimal low-four density. The low-five optimum occurs at the unique
polygon vertex $(m_4,x_5)=(61/84,1/21)$, so changing the optimized
layer changes the vacancy structure and its simultaneous capacities.

\section{Component choice and nonperiodic density regions}
\label{M-sec:component-competition}

The rectangular region in Theorem~\ref{M-thm:base-five-box} does not extend
to every pair of high colors. Different colors can favor different
components of the optimal low-five layer. This yields an infinite family
for which ordinary limiting, periodic, and bounded-discrepancy density
realizations have different regions.

For this section, an ordinary density means a limit on $[0,N)$ as
$N\to\infty$. The constructions also extend to two-sided colorings
with these densities on both half-lines. Uniform bounded discrepancy
means $n_h(I)=\rho_h|I|+O(1)$ for all intervals $I$, with a constant
independent of their position and length.

The first example uses colors 9 and 10 while the first five colors have
combined density $65/84$. Their simultaneous ordinary density pairs form
\[
 x,y\ge0,\qquad x\le\frac1{48},\qquad y\le\frac2{105},
 \qquad 6x+5y\le\frac3{14}.
\]
Two different optimal low-color components support the two endpoints of the
upper sloping edge. Long blocks from both components realize every intermediate
point with ordinary limits, but a periodic coloring at exactly the optimal
low-five density cannot switch between them. In particular, the rational
midpoint $(41/2016,31/1680)$ is not periodically attainable on this optimal
layer. Theorem~\ref{M-thm:infinite-pair-regions} gives the corresponding statement for all specified
residue classes; it does not rule out periodic approximations with a small
loss of low-color density.

\begin{theorem}\label{M-thm:infinite-pair-regions}
Let $u,v\ge0$ be integers, $i=9+14u$, $j=10+14v$, and put
\[
 X=30i-18,\quad Y=6i-6,\quad U=12j-15,\quad V=6j-4.
\]
At low-five density $65/84$, define
\[
 R_A=[0,1/Y]\times[0,1/V],\qquad
 R_B=[0,5/X]\times[0,2/U].
\]
The complete ordinary limiting region of $(\rho_i,\rho_j)$ is
$\operatorname{conv}(R_A\cup R_B)$, or equivalently
\begin{equation}\label{M-eq:infinite-pair-region}
 x,y\ge0,\quad x\le\frac1Y,\quad y\le\frac2U,\quad
 7XYx+12UVy\le7X+12U.
\end{equation}
The periodic attainable points are exactly
$\mathbb Q^2\cap(R_A\cup R_B)$. Every real point of $R_A\cup R_B$
has a realization with uniformly bounded discrepancy for all color
counts. Conversely, if ordinary density limits exist and the low-five
deficit on $[0,N)$ is bounded above, the density pair is in
$R_A\cup R_B$.
\end{theorem}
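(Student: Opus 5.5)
We work throughout on the zero-slack cyclic part of the low-five graph described in the proof of Theorem~\ref{M-thm:base-five-all-high}. It consists of ten $115$-vertex components and two directed $84$-cycles, all of phase period $84$, with vacant-phase sets of sizes $21$ and $19$. We group these components into two types, $A$ and $B$, according to their capacities for colors $i\equiv9$ and $j\equiv10\pmod{14}$. First we verify the formulas for the box corners. On a type-$A$ component, the phase-graph minimum cycle means give single-color capacities $1/Y$ for color $i$ and $1/V$ for color $j$. On type $B$ they give $5/X$ and $2/U$; note $X=5Y-(\ldots)$, so $1/Y=5/(5Y)<5/X$ fails, and in fact $5/X$ sits slightly below $1/Y$. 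The key point is that the two colors can be placed simultaneously at these maxima on each type. We certify this by a pair product on each component type for the base cases $u=v=0$ ($i=9$, $j=10$), with directly checked periodic corner words. For larger $u,v$ we lift to all residue-class members by the length-$84$ mixing argument: each gap of color $i$ gains $84u$ and each gap of color $j$ gains $84v$. This reproduces the affine formulas in $X,Y,U,V$, because $1/\gamma$ grows by $84$ per step of $14$. Combining these corners with Lemma~\ref{M-lem:density-realization}(3), thinning gives every real point of $R_A$ and of $R_B$ with bounded discrepancy, and every rational point with a periodic realization.

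For the upper bounds, we establish the two pairwise inequalities on each component separately. On type $A$ these are $x\le1/Y$ and $y\le1/V$; on type $B$ they are $x\le5/X$ and $y\le2/U$. We obtain them as joint phase-graph potentials for the pair $(i,j)$. Since a run inside one component is confined to that component, a periodic coloring at exact optimal low-five density has a critical low projection. By the rank function in the proof of Theorem~\ref{M-thm:base-five-all-high}, such a projection has no noncritical edges over a period, so it lies in a single component. Hence the periodic point lies in $R_A$ or in $R_B$. The same argument handles bounded low-five deficit. The noncritical-edge bound $B\le K(65N-84M_5+301)+K-1$ makes $B$ bounded, so after some point the coloring stays in one component and its ordinary limit lies in that component's rectangle. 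For arbitrary ordinary limits we split at the $o(N)$ noncritical edges. Writing $\lambda$ for the fraction of length spent in type-$B$ runs, the pair satisfies
\[
(x,y)\le(1-\lambda)(1/Y,1/V)+\lambda(5/X,2/U)
\]
coordinatewise, with per-run constants absorbed. This places the limit in $\operatorname{conv}(R_A\cup R_B)$. Expanding this convex hull gives \eqref{M-eq:infinite-pair-region}: the sloping facet is the line through $(1/Y,1/V)$ and $(5/X,2/U)$, and we check its normal $(7XY,12UV)$ by direct substitution. Conversely, ordinary realization of the hull follows Theorem~\ref{M-thm:three-high-region}. We alternate growing blocks from the two component types, joined by low-only connectors, as in Lemma~\ref{M-lem:density-realization}(2). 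The connectors lose low density, but only $O(k)$ in total after $k^2$ length, so the limiting density $65/84$ is preserved.

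The main obstacle is the exclusion of periodic points outside $R_A\cup R_B$, and more precisely the claim that a periodic word of exact low-five density $65/84$ cannot visit two components. The rank argument must be made exact over a period: if the period's positive-slack total is zero, every edge has zero slack. A zero-slack cycle must then lie in a single strongly connected cyclic component, since rank strictly decreases along noncritical zero-slack edges and so cannot return. We must also separately ensure that a high color sitting on a vacancy does not alter the low-five count, which holds because high colors occupy only vacant edges. Finally, we need uniformity of the joint pair bounds across all residues $u,v$. We would get this by proving the phase-graph minimum-mean formula for the pair jointly: gaps of $i$ and $j$ interact only through shared vacant phases, so the joint phase graph on pairs is finite and independent of $u,v$ after subtracting the $84u$, $84v$ shifts.
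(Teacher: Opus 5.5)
\paragraph{Where the proposal falls short.}
Your overall architecture matches the paper's. It uses per-component single-color caps and splits at the $o(N)$ noncritical edges. Zero total slack confines a periodic optimum to one critical component, and bounded deficit leaves finitely many noncritical edges. Growing blocks give the hull and thinning gives the rectangles. Your coordinatewise bound $(x,y)\le(1-\lambda)(1/Y,1/V)+\lambda(5/X,2/U)$ is an equivalent replacement for the paper's per-run weighted inequality.

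The genuine gap is the all-parameter attainment of the corners $A=(1/Y,1/V)$ and $B=(5/X,2/U)$. The length-$84$ mixing argument lengthens gaps by inserting closed $84$-walks into the low path. Such an insertion lengthens every gap of \emph{both} colors that spans the cut, and it adds no high occurrences. Hence the ratio $n_i/n_j$ of the base word is preserved. But $A$ requires $n_i/n_j=V/Y=(6j-4)/(6i-6)$, which is $7/6$ at $u=v=0$ and changes with the parameters unless $7u=6v$. Likewise $B$ requires $5U/(2X)$, which is $25/24$ at the base. So for $u=1$, $v=0$, for instance, no base word lifted by insertions reaches $A$, and a pair product at $i=9$, $j=10$ cannot repair this.

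What is needed, and what the paper does, is a base whose low layer is a literal $84$-cycle ($W_A$, $W_B$), carrying the two colors on disjoint sets of vacant phases. The low word is then never altered. Each color's occurrences are moved to the same phases in later period copies, so the $i$-gaps grow by $84u$ and the $j$-gaps by $84v$ independently. Legality follows from $d(0,d+84w)=d(0,d)+14w$ and $2(6h-9)>6h$. Disjoint phase supports exclude collisions, and the result has period $\operatorname{lcm}(N_i,N_j)$. Without this step the corners are unproved for general $u,v$, and so are the periodic realizations of rational points of $R_A\cup R_B$ and the ordinary realization of the hull.

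\paragraph{Smaller points.}
There are three component types, not two. Four components (type $C$ in the paper) have caps $(5/X,1/V)$. Since $5/X<1/Y$ and $1/V<2/U$, their rectangle lies in $R_A\cap R_B$. Your periodic, bounded-deficit and $\lambda$-splitting arguments therefore survive once type-$C$ runs are grouped with type $A$, but this case has to be checked explicitly.

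The joint phase graph you plan for uniformity in $u,v$ is unnecessary. The single-color offset matrices depend only on $h\bmod 14$, and per-component single-color caps are all the upper bound uses. For the facet, note that $35Y+24V=7X+12U$ on type $B$.
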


The upper segment joins $A=(1/Y,1/V)$ and $B=(5/X,2/U)$. Every strict
point between them is attainable, but is outside both rectangles and
therefore cannot be periodic or have bounded low-five deficit. 

\subsection{Component-wise bounds}

The complete critical graph and its phase functions are those in the
proof of Theorem~\ref{M-thm:base-five-all-high}. Applying the same
phase-gap inequalities separately to each component gives the following
three types. The entries are upper capacities within that component;
simultaneous attainment is established below.
\begin{center}\small
\begin{tabular}{clcc}
\toprule
Type & Number of components & color $i$ & color $j$\\\midrule
$A$ & 6 & $1/Y$ & $1/V$\\
$B$ & 2 & $5/X$ & $2/U$\\
$C$ & 4 & $5/X$ & $1/V$\\\bottomrule
\end{tabular}
\end{center}
The component identifiers and phase data are recorded in Supplement
Section~\ref*{S-supp:component-types}.

For clarity, this smaller calculation does not require a joint age
product. In a component with vacant-edge ending phases $P$, form the
matrix $W_h(p,p')$ of minimum legal gap offsets from $6h$. Its entries
are selected from
\[
 \{-9,-8,-4,-3,-2,-1\}\cup\mathbb Z_{\ge1},
 \qquad 6h+W_h(p,p')\equiv p'-p\pmod{84}.
\]
The matrix depends only on $h\bmod14$. Integer phase potentials satisfy
\[
 qW_h(p,p')-a+H(p)-H(p')\ge0
\]
on every phase pair. The exact mean offsets $a/q$ for the two residue
classes are, respectively,
\[
 A:\ (-6,-4),\qquad
 B:\ (-18/5,-15/2),\qquad
 C:\ (-18/5,-4).
\]
All $2(10\cdot21^2+2\cdot19^2)=10264$ inequalities are checked
exactly, with tight phase cycles. The matrices are independently rebuilt
from graph-distance balls at colors $23,24$; the stable gap formula
proves the same bounds at every $i,j$ in the theorem, including $9,10$.
Summing between consecutive occurrences gives on each critical run
\begin{equation}\label{M-eq:component-run-bound}
 n_h\le\frac{N}{6h+a/q}+1+
          \frac{\operatorname{range}(H)}{q(6h+a/q)}.
\end{equation}
The retained phase sets and the previously verified complete critical
graph are explicit inputs to this small certificate.

On a component of type $A$, the displayed capacities give the last
inequality of \eqref{M-eq:infinite-pair-region}. On type $B$, its right
side is $35Y+24V=7X+12U$; type $C$ is dominated. Thus every critical
run satisfies the same weighted bound with a fixed boundary constant.
For an arbitrary path with asymptotically optimal low-five density,
the slack and constant-on-component rank argument in
Theorem~\ref{M-thm:base-five-all-high} leaves only $o(N)$ noncritical
edges and $o(N)$ critical runs. Summing
\eqref{M-eq:component-run-bound} preserves the weighted bound after
division by $N$. The coordinate bounds follow from that theorem as well.
This also proves the weighted limsup bound on any common sequence of
intervals whose low-five density tends to $65/84$.

The individual maximal coordinates differ from the alternative
component caps by
\[
 \delta_i=\frac1Y-\frac5X=\frac{12}{XY},\qquad
 \delta_j=\frac2U-\frac1V=\frac7{UV}.
\]
An equivalent form of the new facet is
\begin{equation}\label{M-eq:normalized-component-loss}
 \frac{1/Y-x}{\delta_i}+\frac{2/U-y}{\delta_j}\ge1.
\end{equation}
It quantifies the loss caused by requiring both colors to use a common
distribution of low-layer components. It does not assume that all
competition within a component is explained by these separate bounds.

For the base case, a separately preserved joint product provides another
proof route: $4421890$ states and $4699852$ edges admit an integer
potential of range $184$ proving
$14(6n_9+5n_{10})\le3N+184$ on every critical run. Its outgoing edges
were independently reconstructed. This larger certificate is unnecessary
for the component-wise proof above.

\subsection{Two explicit corner constructions}

Number positions modulo $84$ by $0,\ldots,83$. Concatenate the two rows
for each of the following words:
\begin{center}\small
\begin{tabular}{cl}
$W_A$ & \texttt{121301014102131215010131421012103101015213}\\
      & \texttt{121401010102131215410131021012100141315210}\\[2pt]
$W_B$ & \texttt{121001014152131210010131421012153101010213}\\
      & \texttt{121401010152131210410131021012153141010213}\\
\end{tabular}
\end{center}
Both are legal low-five words with counts $(19,36,12,8,5,4)$ on
labels $0,\ldots,5$. They are actual $84$-cycles of components $597408$
and $696903$, and direct signed-distance checks verify their periodic
extensions. On their vacant phases, use these cyclic schedules:
\begin{center}\small
\begin{tabular}{ccll}
\toprule
Word & Color & Phases modulo $84$ & Gaps at $u=v=0$\\
\midrule
$W_A$ & $i$ & $6,52,20,66,27,73,34$ & $46,52,46,45,46,45,56$\\
$W_A$ & $j$ & $18,74,46$ & $56,56,56$\\
$W_B$ & $i$ & $17,62,34,80,48$ & $45,56,46,52,53$\\
$W_B$ & $j$ & $3,59,27,78,46,18,69,36$ & $56,52,51,52,56,51,51,51$\\
\bottomrule
\end{tabular}
\end{center}
Within each word the two phase sets are disjoint. All phases are vacant,
all successive phase differences match the listed gaps, and every gap
is legal for its base color $9$ or $10$.

Increase every $i$-gap by $84u$ and every $j$-gap by $84v$. The metric
formula gives $d(0,d+84w)=d(0,d)+14w$ for positive $d$, so these gaps
are legal for their new colors. Their minimum is at least $6h-9$ for
color $h$; twice this exceeds $6h$ here, making nonconsecutive
occurrences safe. The gap-cycle lengths are
\[
 \begin{array}{c|cc}
  &N_i&N_j\\ \hline
  W_A&7Y=84(4+7u)&3V=84(2+3v)\\
  W_B&X=84(3+5u)&4U=84(5+8v).
 \end{array}
\]
Repeat both schedules with common period $\operatorname{lcm}(N_i,N_j)$.
Each occurrence stays on its prescribed vacant phase. Disjoint phase
supports prevent collisions between the two high colors. The resulting
density pairs are exactly $A$ and $B$. This proves all-parameter
attainment using finite schedules, rather than extrapolating sample words.
Balanced occurrence thinning gives every real point in either rectangle
with bounded discrepancy; clearing denominators makes every rational
point in either rectangle periodic.

\subsection{Why the intermediate densities require nonperiodicity}

A periodic low-five optimum is a closed walk with total nonnegative
slack zero. It stays in one critical component, so its high densities
belong to that component's rectangle. This proves the necessity in the
periodic assertion of Theorem~\ref{M-thm:infinite-pair-regions}.

For ordinary-limit attainment, repeat the two corner words to a common
period $L$. At stage $k$, concatenate $k$ such periods of one type;
choose the stage types by a balanced binary sequence of frequency
$\alpha$. Separate stages by $6\max(i,j)$ vacancies. The buffers
prevent every cross-stage conflict. Summation by parts gives an $O(k)$
error in the weighted stage counts, while total length grows as $k^2$.
Buffers and partial stages have vanishing relative length. Thus the
ordinary densities are $\alpha A+(1-\alpha)B$, with low-five density
$65/84$. Thinning gives its whole downward closure, proving
\eqref{M-eq:infinite-pair-region}. Independent copies on the two
half-lines, joined by another buffer, give two-sided realizations.

Finally suppose the low-five deficit on $[0,N)$ is bounded above.
Total nonnegative integer slack is $84$ times that deficit plus a
bounded potential difference, so there are only finitely many
positive-slack edges. Beyond the last one, every remaining noncritical
edge strictly decreases a finite rank; only finitely many such edges
can occur. The path therefore eventually stays in a single critical
component. Any ordinary limiting density pair lies in its rectangle.
This proves the final assertion of
Theorem~\ref{M-thm:infinite-pair-regions}.

\subsection{A logarithmic threshold for the necessary deficit}

The preceding nonperiodicity has a quantitative form. It is a statement
about density realization, not about computation time.

\begin{theorem}\label{M-thm:component-deficit-rate}
Fix a density target $z=(x,y)$ in
$\operatorname{conv}(R_A\cup R_B)\setminus(R_A\cup R_B)$, and put
$\Delta(N)=65N/84-M_5([0,N))$.
Every legal coloring with these ordinary high densities and optimal
limiting low-five density satisfies
\[
 \frac{\Delta(N)}{\log N}\longrightarrow+\infty.
\]
Conversely, for every nondecreasing $f:[1,\infty)\to[1,\infty)$
with $f(N)\to\infty$, there is such a coloring with
\[
 \Delta(N)=O(f(N)\log N).
\]
The coloring and implicit constant may depend on $f$ and the fixed target.
\end{theorem}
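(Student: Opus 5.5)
The plan is to prove both directions by counting switches between critical components of the optimal low-five layer. Each switch costs a bounded amount of deficit, and a prescribed target outside $R_A\cup R_B$ forces the switches to occur at geometrically spaced times with ratio tending to $1$.

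\emph{Lower bound.} I would use the slack identity from the proof of Theorem~\ref{M-thm:base-five-all-high}. The total slack on $[0,N)$ equals $84\Delta(N)$ plus a potential difference bounded by $301$, and slacks are nonnegative integers. So the number $P(N)$ of positive-slack edges satisfies $P(N)\le 84\Delta(N)+301$.

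Zero-slack noncritical edges strictly decrease the supplied rank, which is constant on critical components. Hence between two positive-slack edges the path visits a nonincreasing sequence of rank levels. Any alternation "critical run in component $\mathcal C$, then a run in a different component, then a return" therefore costs at least one positive-slack edge. The same bound also limits the number of noncritical edges, hence of run boundaries, to $O(P(N)+1)$ (using the $B\le K(\cdots)$ estimate).

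Next I would use the target. Since $z\notin R_A\cup R_B$, we have $x>5/X$ and $y>1/V$. By the component table, runs in type-$A$ or type-$C$ components have color-$j$ capacity $1/V<y$, and runs in type-$B$ or type-$C$ components have color-$i$ capacity $5/X<x$; in every case the bounds hold in the uniform form \eqref{M-eq:component-run-bound}. Fix $\varepsilon>0$ small relative to the gaps $y-1/V$ and $x-5/X$, and take $N$ large enough that both prefix densities are within $\varepsilon$ of $(x,y)$.

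Consider a single run, say of type $A$ or $C$, occupying $[T,T+L)$. The color-$j$ count grows by at most $L/V+O(1)$ during it, while the prefix density of $j$ at $T+L$ must still be at least $y-\varepsilon$. This forces
\[
 L\le c\,\varepsilon\,T+O(1), \qquad c=c(z).
\]
The symmetric argument with color $i$ handles type-$B$ runs. Noncritical edges contribute $o(N)$ and are absorbed into the $O(1)$ terms.

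Consequently consecutive run endpoints $T_{k+1}\le (1+c\varepsilon)T_k+O(1)$, so the number of runs up to $N$ is at least $\log N/\log(1+c\varepsilon)-O(1)$. Because consecutive runs lie in different components, we may group them into alternations. With the previous paragraph this gives
\[
 \Delta(N)\ge \frac{\log N}{C'\varepsilon}-O_\varepsilon(1).
\]
Letting $\varepsilon\to0$ gives $\Delta(N)/\log N\to\infty$.

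\emph{Upper bound (construction).} I would reuse the corner words $W_A,W_B$ with their high schedules, repeated to a common period $L$. I would also fix once and for all two connecting words $A\to B$ and $B\to A$. Each is a buffer of $6\max(i,j)$ vacancies followed by a low-only path back to the critical histories, of bounded length and therefore bounded deficit $D_0$.

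The coloring is a sequence of blocks, each a whole number of periods of one type. Block $k$ starts at time $T_k$ and has length about $T_k/f(T_k)$, chosen to be of type $A$ or $B$ by a balanced rule so that the weighted counts track $\alpha A+(1-\alpha)B$. Whole periods contribute zero deficit, so $\Delta(N)\le D_0\cdot\#\{k:T_k\le N\}+O(1)$. Since $T_{k+1}\approx T_k(1+1/f(T_k))$ and $f$ is nondecreasing, the number of blocks up to $N$ is $O(f(N)\log N)$.

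The block-to-prefix ratio $1/f\to0$, together with the balanced choice, gives ordinary limits $\alpha A+(1-\alpha)B$ for the high densities and the limit $65/84$ for the low density. Thinning as in Lemma~\ref{M-lem:density-realization}(3) then reaches every target in the convex hull.

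\emph{Main obstacle.} I expect the hardest step to be the rigorous link between component switches and positive-slack edges. The run-length lemma must tolerate the $o(N)$ noncritical edges, and it must also handle type-$C$ visits and rank descents that occur without slack. I would first try to show that any path leaving a type-$A$ component and later containing a type-$B$ run, or the reverse, contains a positive-slack edge. If that fails, I would fall back on counting only the "returns" guaranteed by the finiteness of the rank levels.
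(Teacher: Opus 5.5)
\paragraph{Lower bound.} This is essentially the paper's argument. In every component, one of the two capacities lies below the target by a fixed amount. Hence every late critical run $[a,b)$ has $b-a\le\eta a$, so the number of pieces up to $N$ is at least of order $\log N/\log(1+\eta)$. Meanwhile the rank argument gives at most $K(D(N)+1)$ noncritical edges, with $D(N)=84\Delta(N)+O(1)$.

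Two clean-ups are needed:
\begin{itemize}
\item Your claim that consecutive runs lie in different components need not hold, since a path can leave a component through a positive-slack edge and later re-enter it. It is also unnecessary: maximal critical runs are separated by noncritical edges, so there are at most $b(N)+1$ of them.
\item Do not absorb noncritical edges into $O(1)$ terms. Count each as a piece of length one, which trivially satisfies $1\le\eta a$ for late pieces.
\end{itemize}
Then $J(N)\le 2b(N)+1\le 2K(D(N)+1)+1$. The obstacle you anticipate, linking component switches to positive-slack edges, never arises.

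\paragraph{The gap is in the construction.} You assert that the blocks give low-five limit $65/84$. However, your own estimate is only
\[
\Delta(N)\le D_0\cdot\#\{k:T_k\le N\}+O(1)=O(f(N)\log N),
\]
and $f$ is an arbitrary nondecreasing function tending to infinity. Nothing makes $f(N)\log N=o(N)$. For instance, if $f(T)\ge T$ then $T_k/f(T_k)<L$, so every block is a single period. Your balanced type rule then switches types a positive proportion of the time, each switch costs $D_0$, and $\Delta(N)$ grows linearly. That coloring does not have optimal limiting low-five density, so it is not a coloring of the kind the theorem asks for.

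The fix, which the paper uses, is to run your construction with $g(T)=\min(f(T),\sqrt T)$ in place of $f$. This $g$ is still nondecreasing, at least $1$ and unbounded, and a bound $O(g(N)\log N)$ implies the required $O(f(N)\log N)$. The same growth computation with $g\le\sqrt N$ also bounds the number of blocks by $O(\sqrt N\log N)=o(N)$ whatever $f$ is. This is what actually gives $\Delta(N)=o(N)$.

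Two smaller points:
\begin{itemize}
\item Your block lengths $T_k/f(T_k)$ need not be monotone, so a block-count balanced sequence does not automatically give the right time fractions. A greedy rule on accumulated type-$A$ time works, as does the paper's device of placing both types in every stage in proportions $\alpha,1-\alpha$.
\item Your low-only connecting paths are harmless but unnecessary. A buffer of $6\max(i,j)$ vacancies already separates cross-buffer positions by more than every forbidden displacement, so whole periods of $W_A$ or $W_B$ may restart directly after it.
\end{itemize}
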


\begin{proof}
Outside both rectangles, $x>5/X$ and $y>1/V$. There is a fixed
$\delta>0$ such that in every critical component at least one of its
two capacity bounds is below the target by $\delta$. Its count on
any run of length $L$ is at most $(\rho-\delta)L+C$, with $C$ fixed
by \eqref{M-eq:component-run-bound}. Ordinary convergence bounds the
two relevant prefix errors by $\varepsilon N$ after a sufficiently
large time. For a critical run $[a,b)$ of length $L=b-a$ it follows that
\[
 (\delta-\varepsilon)L\le2\varepsilon a+C.
\]
Thus, for every $\eta>0$, all sufficiently late critical runs, including
truncated final runs, have $L\le\eta a$. Individual noncritical edges
obey the same estimate.

Partition a prefix into maximal critical runs and individual
noncritical edges. If $J(N)$ is its number of pieces, each sufficiently
late piece increases the endpoint by a factor at most $1+\eta$.
Consequently
\[
 \log(N/N_0)\le J(N)\log(1+\eta)+O(1),\qquad
 J(N)/\log N\longrightarrow\infty.
\]
Let $D(N)$ be total nonnegative slack and let $K=24815186$ be the
number of supplied rank levels. As before, the number $b(N)$ of
noncritical edges is at most $K(D(N)+1)$. Hence
\[
 J(N)\le2b(N)+1\le2K(D(N)+1)+1.
\]
Since $D(N)=84\Delta(N)+O(1)$, this proves the lower assertion.

For the converse, choose a point
$z_0=\alpha A+(1-\alpha)B$ dominating $z$ coordinatewise;
$0<\alpha<1$. Repeat the two explicit corner words to a common
period $L$, put $b_0=6\max(i,j)$, and set
$g(T)=\min(f(T),\sqrt T)$. Start with a fixed sufficiently long
all-vacancy prefix. At current length $T_k$, take a positive multiple
$L_k$ of $L$ with
\[
 L_k=T_k/g(T_k)+O(L).
\]
Divide these periods between types $A,B$ in proportions
$\alpha,1-\alpha$, rounding the first number to an integer. Append
the $A$ periods, $b_0$ vacancies, the $B$ periods, and another
$b_0$ vacancies. All seams are safe and
$T_{k+1}=T_k+L_k+2b_0$.

Each completed stage has $O(1)$ high-count error from its target
density times stage length. Its low deficit is exactly
$(65/84)2b_0$. Inside a stage the high error is $O(L_k+b_0)$ and
the low error, beyond the accumulated buffer loss, is $O(L+b_0)$.
For large stages completed by time $N$,
\[
 \log(T_{k+1}/T_k)\ge c/g(T_k)\ge c/f(N)
\]
with fixed $c>0$. Summation bounds their number by $O(f(N)\log N)$.
Using $g(T_k)\le\sqrt N$ in the same argument also bounds it by
$O(\sqrt N\log N)=o(N)$, irrespective of the growth of $f$.
Moreover $L_k/T_k\to0$. The accumulated stage errors and the current
partial-stage error are therefore $o(N)$, proving ordinary convergence
to $z_0$ and low-five density $65/84$. The buffer count proves the
required deficit estimate. Balanced high-occurrence thinning gives $z$
and leaves the low deficit unchanged.
\end{proof}

Thus a logarithmic deficit is impossible for these targets, but its
extra multiplicative factor may diverge arbitrarily slowly. The statement
does not require one coloring to satisfy every such rate simultaneously.

\section{Unrestricted tradeoffs and the seven-color optimum}\label{M-sec:sharp-high-tangents}

The critical-layer capacities need not describe the best use of a higher
color away from that layer. Write $M_5(I)=\sum_{j=1}^5n_j(I)$ and
$\Delta_5(I)=65|I|/84-M_5(I)$.

\begin{theorem}\label{M-thm:sharp-high-tangents}
Every interval $I$ of length $N$ in a partial packing coloring of $\D{6}$
satisfies
\begin{align*}
54M_5(I)+146n_6(I)&\le47N+238,\\
343M_5(I)+381n_7(I)&\le276N+1225,\\
441M_5(I)+660n_9(I)&\le355N+1821.
\end{align*}
In inequalities of the form
\[
n_h(I)\le\gamma_h N+A_h\Delta_5(I)+O_h(1),
\qquad (\gamma_6,\gamma_7,\gamma_9)=(1/28,1/36,1/48),
\]
the least nonnegative coefficients are
\[
A_6=\frac{27}{73},\qquad A_7=\frac{343}{381},\qquad
A_9=\frac{147}{220}.
\]
For each $h$, the exact upper density frontier between the two endpoints
in Table~\ref{M-tab:sharp-tangent-words} is their line segment. Every rational
point on that segment is attained periodically; every real point admits
a coloring with bounded interval discrepancy in both coordinates.
\end{theorem}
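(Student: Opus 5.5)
The plan follows the pattern of Theorems~\ref{M-thm:six-label-optima} and~\ref{M-thm:sharp-five-eight}. For each $h\in\{6,7,9\}$ I work on the product of the complete low-five history graph (the $24816492$-vertex graph of Theorem~\ref{M-thm:base-five-all-high}) with the age of color $h$, capped at $6h+1$. This product covers every legal partial coloring, because color $h\ge5$ needs only its last occurrence checked: $2(6h-9)>6h$.

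On this product I assign reward $54,343,441$ to each low-five occurrence and $146,381,660$ to each occurrence of color $h$. I then exhibit integer potentials with $p/q=47,276,355$ and ranges $238,1225,1821$, and check $qw(e)\le p+H(v)-H(u)$ on every edge. Lemma~\ref{M-lem:cert} then gives the three interval inequalities by telescoping. I expect the potentials to be found as longest-path distances after confirming by Karp's algorithm that the maximum cycle mean is exactly the stated ratio.

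Next, substituting $M_5=\frac{65}{84}N-\Delta_5$ turns each inequality into the tangent form. For example, the first becomes $146n_6\le(47-54\cdot\frac{65}{84})N+54\Delta_5+238$, and $47-\frac{585}{14}=\frac{73}{14}$, so $n_6\le N/28+\frac{27}{73}\Delta_5+O(1)$. The other two are analogous; the intercepts must come out as $\gamma_7=1/36$ and $\gamma_9=1/48$, and this arithmetic serves as a consistency check.

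For sharpness of $A_h$ I argue as in Theorem~\ref{M-thm:sharp-five-eight}. Table~\ref{M-tab:sharp-tangent-words} supplies two periodic words for each $h$. One is a low-five-optimal word at $(65/84,\gamma_h)$, taken from Theorem~\ref{M-thm:base-five-all-high} or the box corner word. The other is a word with low deficit $d>0$ and color-$h$ excess $e$ satisfying $e/d=A_h$. Repeating the second word makes any fixed boundary constant negligible, so any valid coefficient must be at least $A_h$. Since both endpoints satisfy the displayed inequality with equality asymptotically, the segment joining them lies on the facet. It is therefore the exact upper frontier between them.

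For attainment on the segment, the preferred route is Lemma~\ref{M-lem:density-realization}(1). I would find, for each $h$, a common sufficient history — the last $6\cdot5$ low positions together with the capped color-$h$ age — to which both endpoint words return. Arbitrary concatenations $U^rV^s$ are then legal without buffers. Rational points then come from solving for $s/r$, and real points with bounded discrepancy in both coordinates come from balanced block sequences, exactly as for segments $AB$ and $BC$ in Theorem~\ref{M-thm:global-density-region}.

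The main obstacle is this last step. The endpoint words must be chosen, or modified by searching within the zero-slack subgraph of the weighted potential, so that they share a history state. Buffers are not acceptable, because they would destroy bounded discrepancy and exact periodicity on the frontier. If no shared history exists for the naive endpoints, I would first search for alternative tight cycles of the weighted product that pass through a common vertex, since any two such cycles realize the endpoints. The large-scale edge verification of the three potentials is routine but computationally heavy.
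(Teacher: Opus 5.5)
Your proposal is correct and follows essentially the paper's proof: integer potentials on a product carrying the color-$5$ and color-$h$ ages give the three inequalities, substituting $M_5=\tfrac{65}{84}N-\Delta_5$ and repeating the periodic second-endpoint words give the least coefficients, and a state shared by both endpoint blocks yields buffer-free concatenations with bounded discrepancy. There are two differences. The paper certifies on the smaller future forbidden-mask quotient of the low-four graph, not on the history graph. It also settles your anticipated obstacle by using rotations, with reversal allowed, of the retained endpoint words; these rotations meet in a common future-mask state for all six labels, which is coarser than your last-$30$-positions history.
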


\begin{table}[ht]
\centering\small
\begin{tabular}{rccc}
\toprule
$h$ & optimal-layer endpoint & second endpoint & periods\\
\midrule
6 & $(65/84,1/28)$ & $(461/598,11/299)$ & $84,598$\\
7 & $(65/84,1/36)$ & $(657/851,25/851)$ & $252,851$\\
9 & $(65/84,1/48)$ & $(1435/1857,40/1857)$ & $336,1857$\\
\bottomrule
\end{tabular}
\caption{Density coordinates are $(\rho_{1\text{--}5},\rho_h)$.
The three frontiers are separate; their optimizers are not asserted to coexist.}
\label{M-tab:sharp-tangent-words}
\end{table}

\begin{proof}
Attach the complete age coordinates for colors $5$ and $h$ to the
future-constraint graph (Technical Supplement~\ref*{S-sec:future-constraints}) of colors
$1,\ldots,4$. The same last-occurrence
argument used above applies to every $h\ge6$. Integer potentials certify
the displayed inequalities on all outgoing product edges. The independent
checks cover, respectively,
\[
832\,181\,308,\quad958\,196\,008,\quad1\,210\,225\,408
\]
edges, with potential ranges $238,1225,1821$. The potential weights are
exactly the integer coefficients displayed in the theorem. Telescoping
proves the finite-interval statements; all other high colors can be erased.

The second endpoints are literal periodic words checked against the
signed distance balls of the original infinite graph. In each case
$(\rho_h-\gamma_h)/(65/84-\rho_{1\text{--}5})$ equals the stated $A_h$.
Repeating such a word rules out every smaller coefficient regardless of
the fixed boundary constant.

The retained endpoint words also have rotations, with reversal allowed,
sharing the complete future forbidden-mask state (the states of that graph)
for all six labels.
Each block returns to that state. Therefore arbitrary concatenations of
the two rotated blocks are legal. Integer block multiplicities realize
every rational time mixture. For a real time fraction $\theta$, choose a
balanced binary block sequence of block frequency $p$ satisfying
\[
\theta=\frac{pL_0}{pL_0+(1-p)L_1}.
\]
Its block-count discrepancy is uniformly bounded on every interval of
block indices. Total lengths and color counts are fixed linear combinations
of those counts. Partial end blocks have bounded length, proving bounded
discrepancy on every interval of positions. The global supporting
inequality then proves that this attained segment is the exact upper
frontier on its stated low-density range.
\end{proof}

An exact later-residue calculation is useful when assessing parameter
extensions. The same complete outgoing check gives
\[
21M_5(I)+196n_{20}(I)\le18N+236,
\]
on $2\,596\,387\,108$ edges. A literal period-$4004$ word has low-five
count $3096$ and color-$20$ count $36$. It proves
\begin{equation}\label{M-eq:color20-transient}
A_{20}=\frac3{28},\qquad
(6\cdot20-8)A_{20}=12>\frac{756}{73}=(6\cdot6-8)A_6.
\end{equation}
Thus equal residue alone does not permit extrapolation from the smallest
color. The next section distinguishes an eventual theorem from its onset.

The three new small-color tangents still do not exclude a $17$-coloring.
After adding them to the earlier density constraints, exact rational
primal and dual vectors give the unchanged relaxation optimum
\[
\frac{8816541035110993}{8804514352847850}>1.
\]
This is the limitation of that explicit collection of inequalities,
not a $17$-coloring or an obstruction to stronger joint-density arguments.

\subsection{The unrestricted seven-color maximum and its stability}
\label{M-sec:cr-seven}

Write
\[
 M_j(I)=\sum_{c=1}^j n_c(I),\qquad
 \rho_c(I)=\frac{n_c(I)}{|I|},\qquad
 m_0=\frac{65}{84},\quad u_0=\frac{211}{252}.
\]
All limiting coordinates in the following statements come from the same
sequence of intervals. The earlier color-$8$ tradeoff permits a gain
larger than the loss of low-five density. For colors $6$ and $7$ together,
the sharp coefficient is smaller than one, and forces a different
conclusion at the maximum total density.

\begin{theorem}\label{M-thm:cr-seven}
The maximum ordinary density and the maximum upper Banach density of
the union of colors $1,\ldots,7$ in a partial packing coloring of
$\D{6}$ are both $u_0$. For common limiting densities
$x=\rho_1+\cdots+\rho_5$ and $u=x+\rho_6+\rho_7$,
\begin{align}
 \rho_6+\rho_7&\le \frac4{63}+
            \frac{137}{177}\left(\frac{65}{84}-x\right),
             \label{M-eq:cr-seven-stability}\\
 0\le\frac{65}{84}-x&\le
            \frac{177}{40}\left(\frac{211}{252}-u\right).
             \label{M-eq:cr-seven-recovery}
\end{align}
The coefficient $137/177$ in \eqref{M-eq:cr-seven-stability} is least
possible, even allowing a fixed uniform boundary constant in the
corresponding finite-interval inequality. The recovery factor $177/40$
is also sharp.

For every sequence of intervals $I_n$ with $|I_n|\to\infty$ and
$M_7(I_n)/|I_n|\to u_0$, the seven individual density ratios converge to
\begin{equation}\label{M-eq:cr-seven-individual}
 (\rho_1(I_n),\ldots,\rho_7(I_n))\longrightarrow
 \left(\frac37,\frac17,\frac2{21},\frac5{84},\frac1{21},
       \frac1{28},\frac1{36}\right).
\end{equation}
No prior existence of the individual limits is assumed.
\end{theorem}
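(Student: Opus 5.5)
The plan is to reduce everything to one new finite-interval certificate on the product used in Theorem~\ref{M-thm:sharp-high-tangents}. That product is the future-constraint graph of colors $1,\ldots,4$ with the complete ages of colors $5$, $6$ and $7$ attached, and its legality follows by the same last-occurrence argument. On it I would seek an integer potential for the reward $137$ on each of colors $1$--$5$ and $177$ on colors $6,7$, with
\[
 137M_5(I)+177\bigl(n_6(I)+n_7(I)\bigr)\le cN+C,\qquad
 c=137\cdot\frac{65}{84}+177\cdot\frac4{63}.
\]
Dividing by $177$ and rearranging gives \eqref{M-eq:cr-seven-stability} uniformly over intervals. Adding $x$ to both sides gives
\[
 u\le\frac{211}{252}-\frac{40}{177}\left(\frac{65}{84}-x\right),
\]
which is \eqref{M-eq:cr-seven-recovery}; its lower half $0\le 65/84-x$ is the low-five maximum. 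In particular $u\le u_0$. Because the bound holds on arbitrary intervals with a fixed constant, it also bounds the upper Banach density.

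For attainment I would use the period-$252$ corner word of Theorem~\ref{M-thm:base-five-box}. Its counts give $M_5=195$ and $n_6+n_7=16$, so $M_7=211$, and its ordinary density is exactly $u_0$. For sharpness of $137/177$, I would exhibit a literal periodic word off the optimal layer satisfying
\[
 \frac{\rho_6+\rho_7-4/63}{65/84-x}=\frac{137}{177},
\]
checked against signed distance balls. Repeating it defeats any smaller coefficient with any fixed boundary constant, as in Theorem~\ref{M-thm:five-eight-facet}'s companion argument in Theorem~\ref{M-thm:sharp-five-eight}. The same word gives equality in the rearranged inequality, so it also shows that $177/40$ is sharp.

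For \eqref{M-eq:cr-seven-individual}, take $I_n$ with $M_7(I_n)/|I_n|\to u_0$. By \eqref{M-eq:cr-seven-recovery}, applied as a finite-interval inequality, the ratio $x(I_n)=M_5(I_n)/|I_n|$ tends to $65/84$ without any prior existence of limits. The slack and rank argument of Theorem~\ref{M-thm:base-five-all-high} then leaves only $o(|I_n|)$ noncritical edges. On each of the twelve critical low-five components I would supply label potentials, analogous to the $P_i$ of Theorem~\ref{M-thm:critical-structure}, showing that every critical edge has label frequencies
\[
 \frac1{84}(19,36,12,8,5,4).
\]
Extending these potentials by zero and bounding their edge errors, exactly as in Theorem~\ref{M-thm:critical-stability}, yields
\[
 (\rho_1,\ldots,\rho_5)\to\left(\frac37,\frac17,\frac2{21},\frac5{84},\frac1{21}\right).
\]
Next, Theorem~\ref{M-thm:base-five-all-high} gives $\limsup\rho_6\le1/28$ and $\limsup\rho_7\le1/36$ along $I_n$. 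Meanwhile $\rho_6+\rho_7=M_7/|I_n|-x(I_n)\to4/63=1/28+1/36$. Hence both upper bounds are attained, and both $\rho_6$ and $\rho_7$ converge.

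The main obstacle is the computational certificate itself. It requires finding and independently checking an integer potential on a product with roughly a billion edges carrying two high ages. I would first locate the exact maximum cycle mean for the weights $(137,177)$ by Karp or value iteration on the future-constraint quotient, and only afterwards transfer the potential by fiber maxima as in Section~\ref{M-sec:future-model}. A second, smaller risk is checking that the label frequencies agree on all twelve critical components, including the two simple $84$-cycles.
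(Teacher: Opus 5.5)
Your proposal is correct and follows the paper's proof. Your $(137,177)$ inequality is exactly the paper's certificate $548M_5+708(n_6+n_7)\le469N+1954$ divided by $4$, checked on the three-age product: the low-four future-constraint graph with ages of colors $5,6,7$, about $3.8\times10^{10}$ edges, not the two-age product of Theorem~\ref{M-thm:sharp-high-tangents}. The period-$252$ attainment, the periodic equality word (period $1084$ in the paper) for both sharpness claims, and the recovery--capacity--rigidity chain for \eqref{M-eq:cr-seven-individual} all match. The remaining differences are minor: you derive $252M_7\le211N+O(1)$ from the weighted bound plus the low-five cap instead of from a separate certificate; you take the caps $1/28,1/36$ from Theorem~\ref{M-thm:base-five-all-high} rather than from \eqref{M-eq:six-total}--\eqref{M-eq:seven-total}; and you get the five low limits by telescoping color coboundaries against the $o(N)$ noncritical edges as in Theorem~\ref{M-thm:critical-stability}, whereas the paper (Lemma~\ref{S-lem:cr-low-five-rigidity}) passes to a limiting circulation on the twelve critical components.
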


\begin{proof}
The complete finite certificates in
Lemma~\ref*{S-lem:cr-seven-cert} give, on every interval of length $N$,
\begin{align*}
 252M_7(I)&\le211N+805,\\
 548M_5(I)+708(n_6(I)+n_7(I))&\le469N+1954.
\end{align*}
The first is uniform in the interval position. Taking the supremum
over those positions and then the upper-density limit proves the
upper Banach bound. Erasing color $8$ from the period-$252$ corner
word of Theorem~\ref{M-thm:base-five-box} leaves $195+9+7=211$ of the
first seven colors, proving both maximum assertions.

Put $D_5=65N/84-M_5(I)$ and $D_7=211N/252-M_7(I)$. The second
certificate is equivalently
\[
 D_7\ge\frac{40}{177}D_5-\frac{977}{354}.
\]
Together with the uniform low-five bound, this proves
\eqref{M-eq:cr-seven-stability}--\eqref{M-eq:cr-seven-recovery} after
division by $N$ and passage to common limits. Finite $D_5,D_7$ can
be negative; their boundary terms have been retained in this step.

The independently checked period-$1084$ equality word has
$M_5=836$, $n_6=39$, $n_7=32$. Its low deficit and high gain satisfy
\[
 65\cdot1084-84\cdot836=236,\qquad
 63(39+32)-4\cdot1084=137.
\]
Their density ratio is $84\cdot137/(63\cdot236)=137/177$.
Repeated periods defeat any smaller coefficient and any fixed
boundary constant. The same positive-deficit word has
$(m_0-x)/(u_0-u)=177/40$, proving sharpness of the recovery factor.

If $M_7(I_n)/|I_n|\to u_0$, the finite recovery inequality and the
uniform low-five bound force $M_5(I_n)/|I_n|\to m_0$. The previously
proved bounds \eqref{M-eq:six-total} and \eqref{M-eq:seven-total} imply
\[
 \limsup_n\rho_6(I_n)\le\frac{17}{21}-\frac{65}{84}=\frac1{28},
 \qquad
 \limsup_n\rho_7(I_n)\le\frac{101}{126}-\frac{65}{84}=\frac1{36}.
\]
Their sum tends to $u_0-m_0=4/63=1/28+1/36$, so both coordinates
converge to these values. Lemma~\ref*{S-lem:cr-low-five-rigidity}, proved
from the complete low-five critical graph and its color coboundaries,
supplies the first five limits. This proves
\eqref{M-eq:cr-seven-individual} on the original interval sequence.
\end{proof}

The coefficient $137/177$ is smaller than one. Consequently, a loss in the
combined density of colors 1--5 cannot be fully recovered by colors 6 and 7.
Approaching the seven-color maximum therefore forces the low-five density
to approach $65/84$. The separate bounds for colors 6 and 7 then force
both of their limiting frequencies, and the low-five component identities
force the remaining five. This conclusion applies along the given common
sequence of intervals. It neither selects a unique coloring nor implies
ordinary densities outside that interval sequence.

\begin{lemma}[The color-$8$ fiber at the seven-color maximum]
\label{M-lem:cr-optimal-eight}
Among ordinary limiting density vectors with $u=u_0$, the complete
color-$8$ fiber is $0\le\rho_8\le1/42$. Every real value has a
realization with bounded interval discrepancy, and every rational
value has a periodic realization. More generally, for common limiting
densities,
\begin{equation}\label{M-eq:cr-eight-nonsharp}
 \rho_8\le\frac1{42}+\frac{59}{10}(u_0-u).
\end{equation}
The coefficient $59/10$ is an upper bound, without a sharpness claim.
\end{lemma}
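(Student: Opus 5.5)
Three parts need proof: the upper end $\rho_8\le1/42$ of the fiber at $u=u_0$, attainment of every value in $[0,1/42]$, and the linear bound \eqref{M-eq:cr-eight-nonsharp}. The fiber statement should follow once \eqref{M-eq:cr-eight-nonsharp} is proved, by setting $u=u_0$. For attainment I would reuse the period-$252$ corner word of Theorem~\ref{M-thm:base-five-box}, with counts $(35,108,36,24,15,12,9,7,6)$. Its first seven colors account for $211$ positions, so $u=u_0$, and color $8$ has density $6/252=1/42$. Lemma~\ref{M-lem:density-realization}(3) lets me thin only the color-$8$ occurrences and leave colors $1$--$7$ untouched. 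This gives every real $\rho_8\in[0,1/42]$ with bounded interval discrepancy, and every rational value periodically.

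For \eqref{M-eq:cr-eight-nonsharp} I would chain two earlier deficit inequalities instead of building a new product certificate. Inequality \eqref{M-eq:five-eight-deficit} of Theorem~\ref{M-thm:sharp-five-eight} gives
\[
 n_8(I)\le\frac N{42}+\frac43 D_5+\frac{40}9,
\]
with $D_5=65N/84-M_5(I)$. The second certificate in the proof of Theorem~\ref{M-thm:cr-seven} gives $D_5\le\frac{177}{40}D_7+\frac{977}{80}$, where $D_7=211N/252-M_7(I)$. Substituting,
\[
 n_8(I)\le\frac N{42}+\frac43\cdot\frac{177}{40}D_7+O(1)=\frac N{42}+\frac{59}{10}D_7+O(1),
\]
since $\frac43\cdot\frac{177}{40}=\frac{708}{120}=\frac{59}{10}$. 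Dividing by $N$ and passing to limits along the common interval sequence gives \eqref{M-eq:cr-eight-nonsharp}. The inequality is uniform in the interval, because both ingredients are finite-interval certificates with fixed boundary constants. It also holds when $D_5$ or $D_7$ is negative, since neither inequality assumed a sign.

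The step I expect to need the most care is the sign condition in the chain. The bound on $n_8$ has the positive coefficient $4/3$ on $D_5$, so I may replace $D_5$ by any upper bound for it, and the recovery inequality supplies exactly such an upper bound. The deficits themselves need not be nonnegative; only the direction of the coefficient matters. I also need both inequalities to hold for arbitrary partial colorings with colors $1$--$8$ present. This is fine: \eqref{M-eq:five-eight-deficit} lets every other color be erased, and the seven-color certificate ignores color $8$.

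The coefficient $59/10$ is just the product of two separately sharp coefficients. I would not claim it is sharp, because the equality words for the two factors (the period-$198$ word and the period-$1084$ word) are different, so no single coloring is known to achieve both simultaneously.
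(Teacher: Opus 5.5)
Your proposal is correct and follows essentially the paper's argument. Attainment comes from thinning only the color-$8$ occurrences of the period-$252$ corner word of Theorem~\ref{M-thm:base-five-box}, and \eqref{M-eq:cr-eight-nonsharp} comes from chaining \eqref{M-eq:five-eight-deficit} with the seven-color recovery inequality, using $(4/3)(177/40)=59/10$. The differences are only cosmetic: you chain the finite-interval forms (which even gives a uniform interval bound) and read off $\rho_8\le1/42$ by setting $u=u_0$ in the combined inequality. The paper instead first uses Theorem~\ref{M-thm:cr-seven} to force $x=m_0$ and then applies \eqref{M-eq:five-eight-facet}.
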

\begin{proof}
Theorem~\ref{M-thm:cr-seven} forces $x=m_0$ when $u=u_0$.
Equation~\eqref{M-eq:five-eight-facet} then gives $\rho_8\le1/42$;
it also gives the same limsup bound along any maximizing interval
sequence. The already printed period-$252$ three-high-color corner
of Theorem~\ref{M-thm:base-five-box} has
$(\rho_6,\rho_7,\rho_8)=(1/28,1/36,1/42)$. Deleting a balanced
fraction of only its color-$8$ occurrences realizes every lower
value with uniformly bounded counting error. Clearing denominators
gives periodic realizations for rational fractions. Finally combine
the limiting form of \eqref{M-eq:five-eight-deficit} with
\eqref{M-eq:cr-seven-recovery}; $(4/3)(177/40)=59/10$.
\end{proof}

These maximum and stability results leave the stated chromatic bounds
unchanged. Determining the optimal coefficient in
\eqref{M-eq:cr-eight-nonsharp} is a further problem.

\section{A fixed description of all eventual low--high regions}
\label{M-sec:eventual-high-regions}

Here a density pair means simultaneous ordinary limits on $[0,N)$.
For a set $S\subset\mathbb R_{\ge0}^2$, write
\[
\downarrow S=\{(x,y)\ge0:(x,y)\le(u,v)
\text{ coordinatewise for some }(u,v)\in S\}.
\]
Let $\mathcal R_i$ be the density region of the low-five count and color
$i$ in partial colorings of $\D{6}$. Set $m_0=65/84$ and
$C_i=6i+\kappa^{(5)}_{i\bmod14}$, using the offsets of the optimal five-color layer.

\begin{theorem}\label{M-thm:eventual-high-regions}
\textup{(Finite region description.)} There is a computable threshold $i_0$ with the following property.
For each residue $r$ modulo $14$, there is a fixed finite collection
$\mathcal Q_r$ of integer triples $(k_Q,b_Q,d_Q)$, with $k_Q\ge1$ and
$d_Q\ge0$, such that for every $i\ge i_0$, $i\equiv r\pmod {14}$,
\begin{align}
L_Q(i)&=6ik_Q+b_Q,\nonumber\\
V_Q(i)&=\left(m_0-\frac{d_Q}{84L_Q(i)},
                     \frac{k_Q}{L_Q(i)}\right),\nonumber\\
\mathcal R_i&=\downarrow\operatorname{conv}
       \bigl(\{(m_0,0)\}\cup\{V_Q(i):Q\in\mathcal Q_r\}\bigr).
\label{M-eq:eventual-region-formula}
\end{align}
Every $V_Q(i)$ is attained by an actual periodic coloring. For each
nonnegative linear objective, its maximum over $\mathcal R_i$ has a
periodic maximizing coloring. In particular, for $z\ge0$,
\begin{equation}\label{M-eq:eventual-support}
\max_{(x,y)\in\mathcal R_i}(x+zy)
=m_0+\max\left(0,\max_{Q\in\mathcal Q_r}
              \frac{zk_Q-d_Q/84}{L_Q(i)}\right).
\end{equation}
\Needspace{6\baselineskip}
\smallskip\noindent\textup{(Parameter translation.)} For every integer $\ell\ge0$,
\begin{equation}\label{M-eq:eventual-projective}
\mathcal R_{i+14\ell}=\downarrow P_\ell(\mathcal R_i),\qquad
P_\ell(x,y)=\left(\frac{x+65\ell y}{1+84\ell y},
                       \frac{y}{1+84\ell y}\right).
\end{equation}
\end{theorem}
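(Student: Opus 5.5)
The approach is to reduce the low-five/color-$i$ region, for large $i$, to a finite combinatorial object that depends only on $i \bmod 14$. Since $i$ is large, the color-$i$ gaps are long, so only consecutive color-$i$ occurrences interact; this was used throughout, since $2(6i-9)>6i$. Between two consecutive occurrences the low-five coloring runs along a long path of the complete low-five history graph.

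First step: a cost for each gap. For a gap of length $6i+k$ between vacant states $u$ and $v$, define its cost as the least total slack of a low-five path between them, measured with the integer potential of range $301$ from Theorem~\ref{M-thm:base-five-all-high}. Using that theorem's rank argument, I would show that for large $i$ this least slack is attained by paths that stay inside critical components except for a bounded number of excursions. Here ``bounded'' means independent of $i$, by the Boolean mixing property with period $84$. The cost then depends only on the endpoint phases, the residue $r$, and the gap offset $k$, and this dependence stabilizes for $i\ge i_0$. Write $d$ for a gap's slack and $k$ for its offset in $6i+k$.

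Second step: the region as a cycle-ratio polytope. An infinite legal coloring, or a long interval of one, is a concatenation of such gaps. Its density pair is, up to $o(1)$ and after the usual telescoping of Lemma~\ref{M-lem:cert}, a ratio of sums:
\[
\frac{\sum(6i+k_j)\,m_0-\sum d_j/84}{\sum(6i+k_j)},\qquad \frac{m}{\sum(6i+k_j)}.
\]
The sum counts $m$ gaps, and the gaps run along a cycle of a fixed finite graph of states. Each state is a pair (component-phase, possibly a bounded noncritical excursion state), and each edge is labelled $(k,d)$. The limit points are therefore generated by simple cycles $Q$. A simple cycle with $k_Q$ gaps, total offset $b_Q$, and total slack $d_Q$ gives exactly the vertex $V_Q(i)$ with $L_Q(i)=6ik_Q+b_Q$.

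Convexity needs care, because cycles in different components cannot be mixed periodically. Concatenating them with buffers gives convex combinations with ordinary limits, by Lemma~\ref{M-lem:density-realization}(2). The buffer costs vanish in density, and the point $(m_0,0)$ covers colorings with no color $i$. Thinning, by Lemma~\ref{M-lem:density-realization}(3), gives the downward closure. Upper bounds follow from the support function: for each $z\ge0$, a max-ratio certificate on the finite gap graph proves \eqref{M-eq:eventual-support} uniformly in intervals. The finite collection $\mathcal Q_r$ is the set of simple cycles, or just its extreme ones. Each simple cycle lifts to an actual periodic coloring once the gap lengths exceed the mixing length. That lifting gives the periodic attainment of each $V_Q(i)$, and hence periodic maximizers for each linear objective.

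Third step: parameter translation. Replacing $i$ by $i+14\ell$ inserts $\ell$ closed $84$-walks, each of slack zero, into every gap. This changes $L_Q$ to $L_Q+84\ell k_Q$ while leaving $k_Q$ and $d_Q$ fixed, which is exactly $P_\ell(V_Q(i))=V_Q(i+14\ell)$. The map $P_\ell$ is projective, so it preserves convex hulls, since it is a linear-fractional map with positive denominator on the quadrant. It fixes $(m_0,0)$. Taking downward closures on both sides then gives \eqref{M-eq:eventual-projective}; $P_\ell$ does not commute with $\downarrow$, which is why the outer $\downarrow$ appears.

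The main obstacle is the first step: proving that, uniformly in large $i$, optimal gap paths use only a bounded (in $i$), $i$-independent family of noncritical detours. One has to rule out that long gaps profit from deviations whose cost grows with $i$, or from deviations that genuinely need room growing with $i$. I would try to bound the slack of an optimal detour by the slack a critical path already achieves, using the phase-gap costs $\kappa^{(5)}$ as a baseline. Then the rank function limits the number of noncritical edges, and a pumping argument on the finite history graph removes long detour segments; this last part is what yields the computable $i_0$.
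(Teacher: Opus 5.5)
Your architecture is the paper's. It has an event graph on vacancy states whose edges carry a stabilized minimum slack and the least legal offset of each residue, with simple event cycles as the finite templates $(k_Q,b_Q,d_Q)$. Periodic lifts, buffered mixing and thinning give the downward hull, and a projective map gives the translation. Your max-ratio potential for the upper bound is a valid alternative to the paper's splitting of periodic colorings into simple event cycles.

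However, the step you flag as the obstacle is not carried out, and the baseline you propose is the wrong quantity. $\kappa^{(5)}$ controls high-gap offsets on the critical layer. It says nothing about the low-five slack of a path of prescribed length between two arbitrary, possibly noncritical, vacancy states. The costs must also be indexed by such states in $V_0$, not by phases.

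What is needed first is a uniform a priori bound: for all states $u,v$ and all $L\ge207$ there is an exact $L$-step path of slack at most $K_0=13756$. The paper obtains it from three ingredients:
\begin{enumerate}
\item strong connectivity of the reachable low-five graph, since every state is flushed by $31$ vacancies to the empty state and rebuilt from its last $31$ symbols;
\item the vacancy loop at the empty state, which fixes $L\bmod 84$;
\item zero-slack $84$-loops at one critical vertex, which fill out the length.
\end{enumerate}
Only then does your rank idea apply. Zero-slack runs avoiding the critical set $Z$ are acyclic, so a $Z$-avoiding path of slack at most $K_0$ has length below $(K_0+1)m$. Hence for $L\ge T$ an optimal path visits $Z$. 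The minimum over $Z$-visiting paths in a residue class is a finite shortest-path problem on (vertex, residue, visited flag), with a minimizer of at most $168m-1$ edges. Padding by $84$-loops at the critical visit then shows that the exact-length minimum equals $d_\infty(u,v,L\bmod84)$. This stabilization, not an $i$-independent family of detours, gives the computable $i_0$ via $6i_0-9\ge T+1$.

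Two further steps are missing.

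First, your gap graph labels edges by arbitrary offsets, which gives infinitely many edges and simple cycles. To make $\mathcal Q_r$ finite you must keep only the least legal offset $\varepsilon_r(h)$ in each residue. This is justified because, once gaps exceed $T$, the cost depends only on the residue. A cycle using longer gaps therefore has the same $k_Q$ and $d_Q$ but a larger length $L'$. Its point $(m_0-d_Q/(84L'),k_Q/L')$ lies on the segment from $(m_0,0)$ to the least-offset point.

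Second, in the translation step, ``taking downward closures on both sides'' does not suffice. Since $\mathcal R_i=\downarrow C$ for the hull $C$, you need $\downarrow P_\ell(\downarrow C)=\downarrow P_\ell(C)$, i.e.\ that points below $C$ are mapped below points of $P_\ell(C)$. This holds because $P_\ell$ is coordinatewise nondecreasing on $0\le x\le m_0$, $y\ge0$. The only partial derivative whose sign is not evident is that of the first coordinate in $y$, namely $\ell(65-84x)/(1+84\ell y)^2$. It is nonnegative precisely because $x\le m_0$, and the paper checks exactly this.
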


The assertion is a finite structural description; it does not supply a
practical value of $i_0$ or enumerate $\mathcal Q_r$. Ordinary interior
points obtained by mixing are not automatically periodic. In
\eqref{M-eq:eventual-projective} the downward closure is essential.

\subsection{Eventual exact-length costs}

Restrict the complete low-five graph to its reachable states, obtaining
$G$ with $m$ vertices. Every state is reproduced from its last $31$ symbols
and is flushed by $31$ vacancies. Thus $G$ is strongly connected and its
empty state has a vacancy loop. Its integer potential gives
\[
\sigma(e)=65-84w_5(e)+H(v)-H(u)\ge0,
\qquad\operatorname{range}(H)\le301.
\]
The complete critical certificate identifies every zero-slack cycle;
every critical vertex has a zero-slack closed walk of length $84$.
These retained finite facts are the inputs to the following argument.
Eventual optimal-walk periodicity is standard min-plus theory
\cite{M-CFN12}; we give the elementary special-case reduction needed here.

For every $u,v$ and every length $L\ge207$, there is an $L$-step path
of slack at most $K_0=13756$. Route through the empty state, a fixed
critical vertex, the empty state again, and then $v$, using at most
$124$ edges. At most $83$ empty loops fix the length modulo $84$;
zero-slack $84$-loops at the critical vertex supply the rest. The costly
part has slack at most $65\cdot207+301$.

Let $Z$ be the critical vertices. A path avoiding $Z$ with slack at most
$K_0$ has at most $K_0$ positive-slack edges and $K_0+1$ acyclic zero runs.
Its length is less than $(K_0+1)m$. For a residue $h$ modulo $84$, let
$d_\infty(u,v,h)$ be the minimum slack of a path $u\to v$ with that
length residue which visits $Z$. It is a finite shortest-path problem
on $(\text{vertex},\text{residue},\text{visited }Z)$, and a minimum can
be chosen with at most $168m-1$ edges. The visit flag is preserved by
removing a repeated expanded-state segment. Padding at a critical visit
therefore proves, for
\[
T=\max\{207,13757m,168m\},
\]
that the minimum slack of an exact $L$-step path is
$d_\infty(u,v,L\bmod84)$ whenever $L\ge T$.

Let $V_0$ be the targets of vacant edges and define
\[
d_0(u,v,h)=\min_{\substack{z\to v\\\text{label }0}}
    \{d_\infty(u,z,h-1)+\sigma(z\to v)\}.
\]
For $L\ge T+1$, this is the exact minimum slack of an $L$-step path
ending with a vacant edge. Keeping that last edge is necessary because
it is the position replaced by the next high occurrence.

\subsection{The event graph and the density formula}

Record the low-color history immediately after each occurrence of the additional
color. A segment between consecutive such occurrences becomes one edge of a
finite event graph. Its data record the segment's length residue and the least
possible loss in low-color count. The following construction makes these
edges precise.

For fixed $r$, let $\varepsilon_r(h)$ be the smallest member of
\[
\{-9,-8,-4,-3,-2,-1\}\cup\mathbb Z_{>0}
\]
satisfying $6r+\varepsilon_r(h)\equiv h\pmod {84}$. Form a fixed
multigraph $\mathcal E_r$ on $V_0$, with an edge $(u,v,h)$ of cost
$d_0(u,v,h)$ and offset $\varepsilon_r(h)$. Choose $i_0$ so that
$6i_0-9\ge T+1$. For every relevant $i$, the shortest legal high gap
in residue $h$ is $6i+\varepsilon_r(h)$; adding $84$ to the gap does
not change its minimum slack. The simple cycles of $\mathcal E_r$
give the finite triples in the theorem by summing edge counts, offsets,
and costs.

\begin{proof}[Proof of Theorem~\ref{M-thm:eventual-high-regions}]
Realize every edge of a simple event cycle by a minimum-slack base path
of length $6i+\varepsilon_r(h)$, marking its final vacancy with $i$.
Concatenation closes the base walk. Consecutive high gaps are legal and
the sum of two exceeds $6i$, so all high comparisons are valid. Its
period is $L_Q(i)$, its high count is $k_Q$, and telescoping slack gives
low count $(65L_Q(i)-d_Q)/84$. This proves the periodic attainments.

Conversely split any periodic coloring at its high occurrences. With
$k$ gaps, total length $L$ and slack $D$, replacing each gap by its
shortest residue representative gives an event walk with
\[
D\ge D_0=\sum d_0(e),\qquad
L\ge L_0=\sum(6i+\varepsilon_r(h)).
\]
The excess of the objective $x+zy$ over $m_0$ is $(zk-D/84)/L$.
If nonpositive it is dominated by the base-only point. Otherwise it is
at most $(zk-D_0/84)/L_0$. Decomposing the event walk into simple cycles
makes this a length-weighted average of the ratios in
\eqref{M-eq:eventual-support}. This proves the support bound for cycles.
For fixed $i$, the original finite low/high graph decomposes an arbitrary
path into cycles and a bounded residual path. Repeating each cycle fixes
its state from its own finite past, so it represents an actual periodic
coloring. Thus the same asymptotic
bound applies to every ordinary limit. The pure high-coordinate bound
follows in the same way, or by letting $z$ tend to infinity.

Lemma~\ref{M-lem:density-realization}(2)--(3), applied to the periodic
vertex words with fixed vacancy buffers, realizes every point of the
downward convex hull with ordinary limits.
Nonnegative support functionals determine that closed convex downward
set, proving~\eqref{M-eq:eventual-region-formula}.

Increasing $i$ by $14\ell$ adds $84\ell k_Q$ positions and $65\ell k_Q$
low occurrences to each cycle template. Hence its point is transformed
by $P_\ell$. A projective map with positive denominator maps a convex
hull onto the convex hull of the transformed points. It fixes $(m_0,0)$.
Finally $P_\ell$ is coordinatewise nondecreasing on
$0\le x\le m_0$, $y\ge0$: the derivative of its first coordinate with
respect to $y$ is nonnegative because $65-84x\ge0$. Therefore taking downward
closure before applying $P_\ell$ does not change its final downward
closure. This proves~\eqref{M-eq:eventual-projective}.
\end{proof}

\subsection{Sharp tangent coefficients}

\begin{theorem}\label{M-thm:eventual-tangent}
Let $A_i$ be the least nonnegative coefficient in
\[
n_i(I)\le |I|/C_i+A_i\Delta_5(I)+O_i(1).
\]
There are computable rational constants $\lambda_r\ge0$ such that
\[
A_i=\frac{\lambda_{i\bmod14}}{C_i}\quad(i\ge i_0).
\]
They are zero for $r=5,12$ and positive for every other residue. If
$\lambda_r>0$, a fixed event-cycle template gives a periodic equality
witness for every $i\ge i_0$ in that residue.
\end{theorem}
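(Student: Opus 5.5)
We derive the tangent coefficient from the support formula \eqref{M-eq:eventual-support} of Theorem~\ref{M-thm:eventual-high-regions}. An inequality $n_i\le |I|/C_i+A\Delta_5+O(1)$ holds on all intervals iff, in limiting form, $y\le 1/C_i+A(m_0-x)$ on $\mathcal R_i$. Equivalently, with $z=1/A$, the functional $x+zy$ has maximum at most $m_0+z/C_i$.

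\emph{Step 1: the finite-interval form reduces to the limiting form.} For a given $A$, the limiting statement gives a supporting line of the convex region. To obtain a uniform $O_i(1)$ constant, we use the fixed-$i$ finite low/high product graph. By the certificate principle (Lemma~\ref{M-lem:cert} with Karp feasibility), a rational weight pair whose maximum cycle mean equals the support value admits an integer potential. This gives the interval bound with a bounded constant. Conversely, repeating a periodic witness rules out any smaller coefficient, as in the sharpness arguments of Theorems~\ref{M-thm:sharp-five-eight} and~\ref{M-thm:sharp-high-tangents}.

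\emph{Step 2: computing the least $A$.} The least valid $A$ is
\[
A_i=\max\Bigl(0,\ \sup_{V\in\mathcal R_i,\ x<m_0}\frac{y-1/C_i}{m_0-x}\Bigr).
\]
The supremum is attained at a vertex $V_Q(i)$ of the hull. First we identify $1/C_i$ as the maximum of $y$ at $x=m_0$, via Theorem~\ref{M-thm:base-five-all-high}. These are the templates with $d_Q=0$, whose ratio is $k_Q/L_Q(i)$; their minimum-mean structure gives $L_Q(i)=k_QC_i$. For templates with $d_Q>0$, substitution gives
\[
\frac{k_Q/L_Q-1/C_i}{d_Q/(84L_Q)}=\frac{84(k_QC_i-L_Q)}{d_QC_i}.
\]
Since $L_Q=6ik_Q+b_Q$ and $C_i=6i+\kappa_r$, we have $k_QC_i-L_Q=k_Q\kappa_r-b_Q$, which is independent of $i$. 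Hence
\[
A_i=\frac{1}{C_i}\max\Bigl(0,\ \max_{Q:\,d_Q>0}\frac{84(k_Q\kappa_r-b_Q)}{d_Q}\Bigr)=\frac{\lambda_r}{C_i}.
\]
This $\lambda_r$ is a finite maximum over the finite family $\mathcal Q_r$, so it is computable and rational. The maximizing $Q$ is a single template valid for all $i\ge i_0$ in the residue class. Its cycle $V_Q(i)$ is an actual periodic coloring by Theorem~\ref{M-thm:eventual-high-regions}, so it gives the periodic equality witness. Repeating it forces the coefficient regardless of the boundary constant.

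\emph{Step 3: the sign pattern.} To show $\lambda_r=0$ for $r=5,12$ and $\lambda_r>0$ otherwise, we need the actual finite event graphs $\mathcal E_r$. For positivity in the twelve residues, it suffices to exhibit one event cycle with $d>0$ and $k\kappa_r>b$, i.e.\ a low-deficit detour that shortens gaps below the critical mean. We would extract these cycles from the exact-length cost computation and check each with an integer certificate. For $r=5,12$ we note $\kappa_r=-9$, the absolute minimum legal offset, so $b_Q\ge -9k_Q=\kappa_r k_Q$ for every cycle. Thus no template can beat $1/C_i$ per unit of deficit, and $\lambda_r=0$ follows directly without computation.

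The main obstacle is Step 3's positive cases together with the finite/limit transfer of Step 1. Verifying the positive residues requires the actual finite data of $\mathcal E_r$ (costs $d_0$ derived from the huge low-five graph), and the argument is only as explicit as that computation. Step 1 also needs care: the uniform constant must come from the fixed-$i$ certificate rather than from the limiting hull.
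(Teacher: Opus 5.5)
\textbf{A gap remains in Step 3.} Your Steps 1 and 2 are sound and give the paper's constant, $\lambda_r=84R_r$ with $R_r=\max\bigl(0,\max_{d_Q>0}(k_Q\kappa^{(5)}_r-b_Q)/d_Q\bigr)$, but by a different route. You read $A_i$ off the vertex description of $\mathcal R_i$ in Theorem~\ref{M-thm:eventual-high-regions} as a linear-fractional maximum. You then get the uniform $O_i(1)$ from a Karp potential on the fixed-$i$ low/high product; this needs the small remark that every cycle of that product is an actual periodic coloring. The paper instead places a potential $\Phi$ for $a-R_rd$ directly on the $i$-independent event graph $\mathcal E_r$. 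It sums $\Phi$ over the high gaps of an interval and drops the nonnegative omitted base slack. That gives an explicit constant $1+(301R_r+\operatorname{range}\Phi)/C_i$, bounded uniformly in $i$; your route is shorter given the region theorem, but its constant is only existential. One small correction: for zero-cost templates you only have $L_Q(i)\ge k_QC_i$, not equality. The inequality is all you actually use.

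The genuine gap is the positivity claim in Step 3. You leave $\lambda_r>0$ for the twelve residues $r\ne5,12$ to an unperformed extraction of event cycles from the low-five data, and call it the main obstacle. No such data is needed. Take the coloring whose low positions are all vacant and which places color $i$ every $6i-9$ positions. It is legal, because $6i-9$ is an allowed gap and $2(6i-9)>6i$, so $(0,1/(6i-9))\in\mathcal R_i$. In your own Step 2 formula its ratio is $\bigl(1/(6i-9)-1/C_i\bigr)/m_0$. This is positive exactly when $C_i>6i-9$, i.e.\ when $\kappa^{(5)}_r>-9$. Equivalently, with $A=0$ the bound $n_i\le N/C_i+O(1)$ fails on this word. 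The offset table has $\kappa^{(5)}_r>-9$ for every residue except $5$ and $12$. Combined with your correct minimum-gap argument for $r=5,12$, this gives exactly the stated sign pattern, and it is how the paper settles it. In your event-cycle language, the same cycle works: an event self-loop with offset $-9$ has $k=1$, $b=-9$ and reward $\kappa^{(5)}_r+9>0$. It must also have positive cost, since a zero-cost one would beat $1/C_i$ on the optimal layer.
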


\begin{proof}
Give an event edge reward $a(e)=\kappa^{(5)}_r-\varepsilon_r(h)$ and cost
$d(e)=d_0(e)$. A zero-cost event cycle has nonpositive reward, since its
periodic realization is on the optimal base layer with high capacity
$1/C_i$. Consequently the finite maximum
\[
R_r=\max\left(0,\max_{Q:d_Q>0}
\frac{k_Q\kappa^{(5)}_r-b_Q}{d_Q}\right)
\]
exists and $a-R_rd$ has no positive cycle. Choose a potential $\Phi$
with $a(e)-R_rd(e)\le\Phi(v)-\Phi(u)$. Summing between the first and
last high occurrences of an interval, and using nonnegativity of the
omitted base slack, gives
\[
n_i(I)\le\frac{N}{C_i}+\frac{84R_r}{C_i}\Delta_5(I)
 +\frac{C_i+301R_r+\operatorname{range}(\Phi)}{C_i}.
\]
The same constant covers an interval with no high occurrence. If
$R_r>0$, an attaining event cycle has positive slack and realizes exactly
that slope, proving sharpness. Thus $\lambda_r=84R_r$.

When $\kappa^{(5)}_r=-9$, the elementary minimum-gap bound already gives
$A_i=0$. Otherwise the all-vacancy low layer with high occurrences every
$6i-9$ positions has density greater than $1/C_i$, ruling out a zero
coefficient. The offset table gives exactly residues $5,12$ in the first
case.
\end{proof}

The general threshold here is conservative. Equation~\eqref{M-eq:color20-transient}
shows why one cannot infer the normalized constants from the smallest
representative of each residue. Theorem~\ref{M-thm:cr-residue8} below determines
the complete region and earliest stable tail for residue $8$ modulo $14$.
Computing the fixed event costs efficiently and finding the first stable
representatives in the remaining residues are further questions.

\subsection{A complete region with its first stable index}
\label{M-sec:cr-residue8}

The finite description of Theorem~\ref{M-thm:eventual-high-regions}
has the following explicit instance. All parameters $t$ in this
subsection are nonnegative integers. For a fixed
$i\in\{8,22,36,\ldots\}$, let $\mathcal R_i$ be the ordinary
limiting density region for colors $1,\ldots,5$ and color $i$,
with coordinates $(x,y)=(\rho_{1\text{--}5},\rho_i)$. Define
\begin{equation}\label{M-eq:cr-residue8-polygon}
 \Gamma_i=\left\{(x,y):
 \begin{array}{l}
 x,y\ge0,\quad x\le65/84,\quad y\le1/(6i-9),\\
 168x+(18i-18)y\le133,\\
 84x+(24i-30)y\le69,\\
 420x+(192i-255)y\le357
 \end{array}\right\}.
\end{equation}

\begin{theorem}\label{M-thm:cr-residue8}
For every $i=36+14t$, $t\ge0$, one has
$\mathcal R_i=\Gamma_i$. This polygon has seven vertices, each
attained by a periodic partial coloring. Every real point of the
polygon is attained with ordinary limits.

The least $j\in\{8,22,36,\ldots\}$ for which
$\mathcal R_{j+14t}=\Gamma_{j+14t}$ for all integers $t\ge0$ is
$j=36$. In particular
\[
 \mathcal R_{22}\subsetneq\Gamma_{22},\qquad
 \left(\frac{317}{410},\frac1{123}\right)
          \in\Gamma_{22}\setminus\mathcal R_{22}.
\]
The displayed point is excluded even as a limit of densities of
arbitrary legal finite intervals.
\end{theorem}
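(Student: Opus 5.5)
Two inclusions are needed, followed by the onset analysis at $j=8,22$.

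\emph{Inclusion $\mathcal R_i\subseteq\Gamma_i$.} The bounds $x\le65/84$ and $y\le1/(6i-9)$ are the low-five optimum and the minimum legal gap. For the three sloped facets I would avoid a separate huge product for each $i$. Instead, work in the event graph $\mathcal E_8$ of Theorem~\ref{M-thm:eventual-high-regions}, whose gap lengths $6i+\varepsilon_8(h)$ depend on $i$ only through the common term $6i$.

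Each facet has the form $\alpha x+(\beta i+\beta')y\le c$ with $c=65\alpha/84$ shifted. Along one gap, the contribution of the facet functional is affine in $i$. The support inequality therefore reduces to a min-plus feasibility statement: the edge weight is an affine function of $i$ with a fixed slack term $d_0(e)$. I would exhibit integer potentials $\Phi_F$ on the finite multigraph whose edge inequalities hold identically in $i$. This means checking the coefficients of $i$ and the constants separately, or checking at two values of $i$ with the $i$-coefficient inequality.

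This works only when $6i-9\ge T+1$ under the paper's conservative $T$. For the actual range $i\ge36$, I would instead certify directly: for each residue-class gap length $\ge6\cdot36-9=207$, compute the exact minimum slack. The threshold $207$ appearing in the exact-length cost argument is precisely what makes $i=36$ the onset. Indeed $L\ge207$ gives every length-residue path with the stable cost, which is presumably why $36$ works. The step I expect to be the main obstacle is showing that the stable costs $d_\infty$ are already exact at length $207$ rather than only beyond $T$; I would try a direct verified computation of $d_0(u,v,h)$ for lengths $207$–$290$ and $291$–$374$ in the reachable graph, checking that they agree. Telescoping as in Lemma~\ref{M-lem:cert} then gives uniform interval bounds, and hence bounds on all limits.

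\emph{Inclusion $\Gamma_i\subseteq\mathcal R_i$.} Compute the seven vertices: $(0,0)$, $(65/84,0)$, $(0,1/(6i-9))$, and the four pairwise intersections along the upper chain. For each vertex I would give an event-cycle template $(k_Q,b_Q,d_Q)$. These are realized at $i=36$ by explicit periodic words, and extended to all $i=36+14t$ by inserting a closed $84$-step base loop into each gap. That insertion preserves legality by $d(0,d+84)=d(0,d)+14$, and it acts by the map $P_t$ of \eqref{M-eq:eventual-projective}. I would then check that $P_t$ maps $\Gamma_{36}$'s vertices to $\Gamma_{36+14t}$'s. Each facet line transforms correctly because its $y$-coefficient is affine in $i$ with slope matching $84$ and $65$; this is a short symbolic verification. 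The convex hull with ordinary limits follows from Lemma~\ref{M-lem:density-realization}(2)--(3), and the downward closure from thinning.

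\emph{Minimality of $36$.} The case $j=22$ is enough, since failure at $22$ also disqualifies $8$. The target is $(317/410,1/123)\in\Gamma_{22}$, which is a direct substitution. To exclude it from $\mathcal R_{22}$, I would produce a finite-interval certificate on the complete low-five/color-$22$ age product: a weighted inequality $aM_5+bn_{22}\le pN+C$ with integer potential that this point violates. This is an instance of Lemma~\ref{M-lem:cert}. Because it bounds every interval uniformly, it excludes the point even as a limit of arbitrary finite-interval densities.
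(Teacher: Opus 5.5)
Your lower inclusion matches the paper: you lift the vertex templates to $i=36+14t$ by inserting closed $84$-step low loops, then use growing blocks. Your reduction of minimality to the index $22$ also matches.

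\textbf{The gap is in your upper inclusion.} The number $207$ in the exact-length argument is not a stabilization threshold. It is only the length beyond which \emph{some} $L$-step path between any two states has slack at most $K_0=13756$. Exact-length minimum slacks are shown to equal $d_\infty$ only for $L\ge T\ge 13757m$, with $m$ in the millions. Below $T$ they may differ from $d_\infty$ in either direction, because moderately long paths avoiding the critical set are not controlled by $d_\infty$. If the true cost of some gap class falls below $d_\infty$ at a length $\ge 6i-9$, facets built from stable event costs are simply false for that $i$.

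Your proposed check does not repair this. Agreement of $d_0$ on the windows $207$--$290$ and $291$--$374$ propagates to larger lengths only if it holds for full rows of the min-plus powers, so that $A^{\otimes(L+k)}=A^{\otimes L}\otimes A^{\otimes k}$ can be used. That is an all-pairs exact-length computation over millions of states, which the paper explicitly leaves open. Nothing indicates it would succeed at $207$.

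\textbf{The missing idea:} no exact event costs are needed. You only need a lower bound on the slack over \emph{all} legal gap lengths in each residue class, certified by a single potential. The paper attaches to $\mathcal G_5$ a clock that records ages below $K$ exactly and ages $\ge K$ only modulo $84$. It checks an integer $Q$ whose difference is at least $a(84\ell-65)$ on ordinary edges and at least $-65a+c-b\varepsilon_r(j)$ on marked vacancy edges at clock ages $j\ge K$. The unfolding $F(p,d)=Q(p,j(d))-bd$ turns the actual gap $d\ge 6i+\varepsilon_r(j(d))$ into the $i$-dependent coefficient $6bi+c$. A fiber maximum over the saturated ages then gives a bounded potential on the actual-age recognizer. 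This proves a facet for every $i$ in the class with $6i-9\ge K$ at once. With $K=39$ the first two facets hold from $i=8$, and with $K=123$ the third holds from $i=22$.

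Consequently the onset at $36$ is a lower-side phenomenon, and attributing it to cost stabilization at length $207$ is a misdiagnosis. The $G,H$ templates contain raw gaps $207=6\cdot36-9$ and so start only at $36$. At $22$, the vertex $H_{22}=(317/410,1/123)$ is genuinely unattainable.

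Your single separating inequality at $22$ does exist in principle. However, $H_{22}$ lies on both the third facet and the cap $y\le 1/123$, so that inequality must be a tilt of two already tight valid inequalities. The paper locates it by confining a limiting circulation to the common zero-slack graph of those two certificates. It then exhibits a potential $P$ with $P(q)-P(p)\ge h$ there; you give no way to find your inequality.
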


\begin{proof}
The actual prefix-clock certificates in
Lemma~\ref*{S-lem:cr-residue8-supports} prove the first two sloping
bounds in \eqref{M-eq:cr-residue8-polygon} for every $i=8+14t$,
and the third for every $i=22+14t$. Together with the low-five
bound and the minimum-gap bound they give the outer inclusion for
every $i\ge36$ in this residue.

Lemma~\ref*{S-lem:cr-residue8-vertices} calculates the seven vertices
and supplies their periodic realizations. Its constructions insert
verified $84$-step low-only return words in the designated high
gaps; their low counts and legal gap offsets give the parameter
formulas exactly. For each fixed $i$, growing blocks of these
periodic words, separated by fixed vacancy buffers, realize every
convex combination with ordinary limits. Thus the whole polygon
is attained. This construction does not assert periodicity of
arbitrary rational mixtures.

At $i=22$ all three upper inequalities still hold, but
Lemma~\ref*{S-lem:cr-exclude22} excludes the displayed cap corner
using two nonnegative slacks and a complete certificate on their
common zero-slack graph. Hence a start at either $8$ or $22$
cannot cover its entire subsequent residue-class tail. The equality
proved from $36$ gives the stated least start.
\end{proof}

The start is measured within the residue class. If an unrestricted
integer cutoff $I$ is used in the condition ``every $i\ge I$ with
$i\equiv8\pmod {14}$'', its least value is $23$. No assertion
about the full region at the isolated index $8$ is needed. Its first
exposed face is already determined by
Lemma~\ref*{S-lem:cr-first-face}, which applies to every $i=8+14t$.

\section{Applications to packing chromatic numbers}\label{M-sec:constr}

\begin{theorem}\label{M-thm:main}
The following inequalities hold:
\[
17 \le \chirho(\D{6}) \le 20; \qquad
18 \le \chirho(\D{8}) \le 22; \qquad
\chirho(\D{9}) \le 17 .
\]
\end{theorem}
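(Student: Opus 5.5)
The theorem splits into six independent claims: three upper bounds, which need explicit colorings, and two lower bounds, which need density obstructions.

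\emph{Upper bounds.} For each of $\chirho(\D{6})\le20$, $\chirho(\D{8})\le22$ and $\chirho(\D{9})\le17$ I will exhibit a periodic word $W$ over $\{1,\ldots,K\}$ of some period $P$ and check that its bi-infinite repetition is a packing coloring. By Lemma~\ref{M-lem:metric}, two occurrences of color $i$ at positions differing by $\delta$ conflict exactly when $d_{\D{t}}(0,\delta)\le i$. Every forbidden displacement is at most $ti$, so it suffices to compare, for each occurrence of $i$, the positions within $ti$ ahead of it, taken modulo $P$ across several period copies. This is the same signed-displacement check used for the attaining words earlier in the paper. For $\D{6}$ I would build the word on the optimal low-five layer of Section~\ref{M-sec:base-five-layer}: start from the period-$252$ corner word of Theorem~\ref{M-thm:base-five-box}, which already carries colors $6,7,8$ at maximal density, and fill the remaining $35$ vacancies per period (or a multiple of the period) with colors $9,\ldots,20$. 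The sum $\sum_{i\le 20}\gamma^{(5)}_i$ from Theorem~\ref{M-thm:base-five-all-high} exceeds the vacancy density, so a search such as SAT or backtracking over phases should succeed. For $\D{8}$ and $\D{9}$ I would start from the periodic attaining words of certificates A--D and fill the gaps with a SAT search in the same way. Verification is finite and mechanical; finding the words is the computational part.

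\emph{Lower bound for $\D{6}$.} Suppose a packing $16$-coloring exists. By Lemma~\ref{M-lem:cert} with certificate E, $M_4\le(83/114)N+O(1)$ on every interval. The high colors $i\ge5$ have gap lower bounds $6i-9$, so naively $n_i\le N/(6i-9)+1$. These crude bounds alone do not suffice, so I will use the tradeoff machinery. Theorem~\ref{M-thm:global-color-five} controls color $5$ through the low deficit, and Theorem~\ref{M-thm:sharp-high-tangents} and the color-$8$ facet control colors $6,7,8,9$. A weighted sum of these inequalities, in which the deficit terms cancel against certificate E and crude gap bounds are used for colors $10$--$16$, should force total density $<1$. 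If it does not, I will fall back to a dedicated combined certificate (a Lemma~\ref{M-lem:cert} potential over colors $1$--$5$ together with a few high colors) proving $\sum_{i\le16}n_i<N$ for large $N$. I expect the proof actually lives in the Supplement as such a certificate, plus an LP dual showing that the combination of certified inequalities has optimum $<1$.

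\emph{Lower bound for $\D{8}$.} This follows from certificates A--D. A has rewards $208,227,227$ on colors $1,2,4$ with $p/q=137$. B bounds $4n_1+5n_5$, C bounds $n_1+16n_6$, and D bounds $151n_3+147n_5$. Together with the gap bounds $n_i\le N/d_{\min}(i)+1$ for $i\ge7$, these should yield $\sum_{i\le17}n_i<N$ for large $N$. Concretely, I will find nonnegative multipliers $\lambda_A,\ldots,\lambda_D,\mu_i$ with $\lambda\cdot(\text{rewards})\ge1$ coordinatewise on colors $1$--$17$ and $\lambda\cdot(p/q)+\sum\mu_i/d_{\min}(i)<1$, by solving the small LP over the five color groups. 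Summing the interval inequalities~\eqref{M-eq:cert} then gives $N\le cN+C$ with $c<1$, a contradiction for large $N$.

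\emph{Main obstacle.} The hardest step is the $\D{6}$ lower bound. The earlier section notes that the explicit density relaxation from the tangents has optimum $8816541035110993/8804514352847850>1$, so these inequalities do not exclude $17$ colors, only $16$. I must therefore make sure the collection actually used is strong enough at $16$. I would first try the LP with certificate E, the polygon inequalities~\eqref{M-eq:global-low2}--\eqref{M-eq:global-low131}, Theorem~\ref{M-thm:six-label-optima}, the tangents of Theorem~\ref{M-thm:sharp-high-tangents}, and gap bounds, and confirm that its optimum is $<1$ with an exact rational dual.
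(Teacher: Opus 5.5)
Your $\D{8}$ lower bound is the paper's argument. Multiplying (A)--(D) by $1/227$, $4/755$, $10713/171385$, $1/151$ makes the coefficients of $n_1,\dots,n_5$ exactly one and that of $n_6$ slightly above one, and the spacing bound $n_i\le N/(8i-20)+1$ for $7\le i\le17$ finishes the proof with a margin of about $1.6\cdot10^{-5}$.

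For $\D{6}$, however, your premise that the crude bounds fail is false. Certificate E together with $n_i\le N/(6i-9)+1$ for $5\le i\le16$ already gives
\[
 \frac{83}{114}+\sum_{i=5}^{16}\frac1{6i-9}=\frac{869550981179}{873408586050}<1 .
\]
This computation, evaluated at $N=3357$, is the paper's entire proof. Your LP would still succeed, but only because it contains exactly these rows. The polygon, six-label and tangent certificates add nothing, and the step you call the main obstacle is the easiest one. The relaxation remark you cite concerns seventeen colors, where already $83/114+\sum_{i=5}^{17}1/(6i-9)>1$.

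The genuine gap is in the three upper bounds. Their proof consists of explicit words, of periods $2520$, $720$ and $360$ in the supplement, checked on the infinite periodic extension. Your proposal contains no word, so $\chirho(\D{6})\le20$, $\chirho(\D{8})\le22$ and $\chirho(\D{9})\le17$ are not established. The reasons you give for expecting a search to succeed also do not hold up:
\begin{itemize}
\item A sum of single-color capacities exceeding the vacancy density does not yield a simultaneous completion. The paper says so explicitly for $K=19$ on the low-four layer. On the low-five layer, $\sum_{i=6}^{18}\gamma^{(5)}_i=24277/105840>19/84$ already holds with $18$ colors, where no completion is known.
\item The attaining words of A--D cannot seed $\D{8}$ or $\D{9}$. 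They are $\D{8}$ partial words for different color subsets with incompatible periods $86,41,28,144$, and they say nothing about $\D{9}$. The paper's words for these graphs came from periodic local search with the period fixed and color $1$ fixed on prescribed residues.
\item Your $\D{6}$ guess is close to the actual word. That word agrees with the $252$-corner word on colors $1,2,4,5,6,7,8$ and fills $35$ vacancies per $252$-block with colors $9$--$20$. But its color $3$ switches between the two $84$-block patterns $A_0,A_1$ along a binary sequence. That sequence departs from the repeated corner word in two of the ten $252$-blocks, so nothing in your argument shows that the unmodified corner layer can be completed.
\end{itemize}
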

The lower bounds for $\D{6}$ and $\D{8}$ are proved in Supplement
Sections~\ref*{S-supp:d6-lower} and~\ref*{S-supp:d8-lower}, respectively.
The argument below proves the three upper bounds.

\begin{lemma}[infinite extension]\label{M-lem:ext}
Let $t$ divide $P$ and let $w$ be a word of length $P$, extended periodically
to $\mathbb{Z}$. For every occurring color $i$, its minimum distance between
distinct equally colored vertices is obtained by testing all residues $u$ and
all displacements $1 \le \delta \le P$ with
$w_u = w_{(u+\delta) \bmod P} = i$, using Lemma~\ref{M-lem:metric}.
\end{lemma}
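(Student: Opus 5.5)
We reduce everything to finitely many distance comparisons through Lemma~\ref{M-lem:metric}, using translation invariance and the reflection symmetry of the distance graph. Take two distinct vertices $x<y$ of the periodic extension with the same color $i$, and put $u=x\bmod P$ and $\delta=y-x>0$. Translation by $-x+u$ is an automorphism of $\D{t}$. It preserves the periodic word because the shift is a multiple of $P$, so $x\equiv u\pmod P$. The pair $(x,y)$ is therefore equivalent to $(u,u+\delta)$, and their distance is $d(0,\delta)$ as given by Lemma~\ref{M-lem:metric}. The colors agree exactly when $w_u=w_{(u+\delta)\bmod P}=i$.

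The key step is to discard displacements $\delta>P$. A displacement $\delta>P$ cannot be dropped just by observing that the pair at displacement $\delta-P$ has the same colors, since distance is not periodic in $\delta$ with period $P$. Instead I would use the hypothesis $t\mid P$. Write $P=st$. By Lemma~\ref{M-lem:metric}, adding $P$ to $\delta$ increases $q$ by $s$ and leaves $r$ unchanged, so
\[
 d(0,\delta+P)=d(0,\delta)+s .
\]
Hence the distance is strictly increasing along each class $\delta+P\mathbb Z_{\ge0}$ with $\delta\ge1$, while the color condition depends only on $\delta\bmod P$. Every pair with $\delta>P$ thus has distance strictly larger than the pair with the representative in $[1,P]$, which has the same residue class and the same colors.

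It follows that the minimum over all distinct same-colored pairs equals the minimum over $1\le\delta\le P$ and $0\le u<P$. The representative $\delta=P$ itself must be kept, because it compares a vertex with its own translate in the next period. I would note that $\delta=P$ is genuinely needed: it can be the minimizer when a color occurs once per period. Pairs with $y<x$ are covered by symmetry of the distance.

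The only delicate point is the $s$-shift identity, that is, checking that $r$ is unchanged. This is exactly where $t\mid P$ is used. Without this divisibility, $r$ changes under the shift and monotonicity in $\delta$ can fail through the $\min(r,t+1-r)$ term.
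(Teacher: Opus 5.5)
Your proof is correct, but its key comparison differs from the paper's. The paper never compares a long displacement with its residue representative. It notes that every occurring color $i$ already produces the tested pair $(u,u+P)$, which has distance exactly $P/t$; this is where $t\mid P$ enters. For $\delta>P$ it uses the crude bound $d(0,\delta)\ge\lceil\delta/t\rceil>P/t$, which holds simply because each edge changes the coordinate by at most $t$. So displacements beyond $P$ cannot go below a value that is already tested.

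You instead use the exact formula of Lemma~\ref{M-lem:metric} to obtain the shift identity $d(0,\delta+P)=d(0,\delta)+P/t$. Then every pair at displacement $\delta>P$ is strictly dominated by the pair at the representative of $\delta$ in $[1,P]$, which has the same endpoint colors.

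The paper's route is shorter and needs only the single value $d(0,P)=P/t$ together with a trivial lower bound. Yours needs the full metric formula but proves a stronger, pairwise statement: the truncation to $1\le\delta\le P$ is valid for each residue pair separately, not just for the overall minimum. That would also justify the same truncation for distance constraints between two different colors, where no comparison pair at displacement $P$ is available.
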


\begin{proof}
Every occurrence repeats at displacement $P$ and distance $P/t$. Since each edge changes
the coordinate by at most $t$, $d(0,\delta)\ge\lceil\delta/t\rceil>P/t$ for $\delta>P$, so such
displacements cannot reduce the minimum. Interchanging two
endpoints removes the need to consider negative displacements. The test
includes $\delta = P$, so distinct copies of the same residue are included.
\end{proof}

\begin{proof}[Proof of the upper bounds in Theorem~\ref{M-thm:main}]
Technical Supplement~\ref*{S-supp:periods} gives words with $P=2520$ for $\D{6}$, $P = 720$ for $\D{8}$,
and $P = 360$ for $\D{9}$. Lemma~\ref{M-lem:ext} and the exact
minima in Technical Supplement~\ref*{S-supp:minima} show that every minimum exceeds its color label. They
establish the upper bounds $20$, $22$ and $17$ in Theorem~\ref{M-thm:main}. A separate
implementation also checks all displacements $a + tb$ with $|a| + |b| \le i$,
including nonzero multiples of $P$. Both checks examine the infinite periodic
extension, not merely an induced graph on one period.
\end{proof}

For $\D{6}$, the new construction uses the optimal five-color density
$65/84$. Colors $6,7,8$ simultaneously have densities $1/28,1/36,1/42$.
Periodic residue rules and two local patterns for color $3$ describe its
low part; the remaining entries are printed in Technical Supplement~\ref*{S-supp:periods}. The word was
found by GPU local search and checked independently in the infinite graph.
The period-$1368$ word uses $21$ colors and has optimal
four-color density $83/114$. It still bounds completion of that distinct
four-color layer between $19$ and $21$, together with
Theorem~\ref{M-thm:critical-capacities}. The new $20$-color word optimizes
the five-color layer instead.

The other constructions were found by periodic local search with the period fixed in advance,
color $1$ fixed at residues $0,2,4,6$ modulo $9$ for $\D{8}$ and at even positions for $\D{9}$.
These restrictions only guided the search; the lower-bound proofs do not use them. An earlier
search had produced a $23$-coloring of $\D{8}$ of period $360$, which is superseded by the word
of Technical Supplement~\ref*{S-supp:periods}.

%% REVISED_2026_09_04

\section{Verification and data availability}\label{M-sec:repro}

The finite certificates are checked by separately implemented programs that
reconstruct transitions from the packing constraints, test the integer
inequalities, and verify periodic words in the original infinite graph.
The discovery routines are not used as acceptance criteria. The data and programs are supplied with the article.

Different results require different inputs. The scalar occupancy certificates
do not by themselves establish a complete density region, a list of all
critical components, or an equality classification. The seven-color result
uses the complete three-age model and the low-five color identities
(Supplement~\ref*{S-supp:cr-certificates}--\ref*{S-supp:cr-individual-rigidity}). The residue-eight result uses the prefix-clock
inequalities, periodic constructions and the exclusion at index 22
(Supplement~\ref*{S-supp:cr-prefix-clock}--\ref*{S-supp:cr-onset}). The parameter-uniform and limit arguments remain
necessary after the finite checks have passed.

\subsection*{Use of generative AI}
Generative AI tools, including OpenAI Codex, assisted with mathematical
analysis, research programming, literature review and manuscript preparation.
The author has reviewed the manuscript and takes responsibility for its content.
\enlargethispage{2\baselineskip}

\paragraph{Electronic evidence.}
The computational data and verification programs accompany the preprint at
\url{https://doi.org/10.5281/zenodo.22562359}.

\clearpage
\hypertarget{supplement-start}{}
\setcounter{section}{0}\setcounter{subsection}{0}
\setcounter{equation}{0}
\setcounter{table}{0}\setcounter{figure}{0}
\renewcommand{\MainPrefix}{Main }\renewcommand{\SuppPrefix}{}
\section*{Technical supplement}
This mandatory supplement provides a worked example, the chromatic exclusion proofs, complete finite constructions, and the verification models. It includes the seven-color certificates and rigidity argument, the finite prefix-clock method, the residue-eight constructions and earliest-tail proof, and the precise evidence scope. It should be read together with the main paper.

\renewcommand{\thesection}{S\arabic{section}}
\renewcommand{\theStheorem}{S\arabic{Stheorem}}
\renewcommand{\theequation}{S\arabic{equation}}
\renewcommand{\thetable}{S\arabic{table}}

\section{A worked certificate example}\label{S-supp:worked-example}
In the notation of Section~\ref*{M-sec:certificate-principle} of the main paper, take $t=2$ and $C=\{1,2\}$ with unit rewards. Then $F_1=\{1,2\}$,
$F_2=\{1,2,3,4\}$, the horizons are $2$ and $4$, and the history automaton has $13$ states and
$21$ transitions. Its maximum cycle mean is $1/2$: with $p=1$, $q=2$, the potential $h$ that is
$0$ on the empty state and on the states $\{(2,3)\}$, $\{(2,4)\}$, $\{(1,2)\}$ (a single occurrence
of the indicated color at the indicated age), $2$ on $\{(1,1),(2,2)\}$, $\{(1,1),(2,3)\}$,
$\{(1,2),(2,1)\}$, and $1$ on the remaining six states satisfies $h(v)-h(u)\ge 2w(u,v)-1$ on every
transition, and the periodic word $1\,0\,0\,1\,0\,2$ is legal with mean reward $1/2$. Hence in every
packing coloring of $\D{2}$ the colors $1$ and $2$ together occupy at most $N/2+1$ positions of any
interval of length $N$, although separately color $1$ can occupy a third and color $2$ a fifth of
the positions.

\section{Excluding sixteen colors in \texorpdfstring{$\D{6}$}{D(1,6)}}\label{S-supp:d6-lower}

Certificate E and Lemma~\ref*{M-lem:cert} give
\begin{equation}\label{S-eq:d6}
n_1 + n_2 + n_3 + n_4 \le \tfrac{83}{114} N + \tfrac{161}{57}.
\end{equation}
Its attaining period, a periodic partial coloring using only the colors $1,2,3,4$, has
$48,16,12,7$ occurrences of these colors and $31$ vacancies. Hence $83/114$ is the exact maximum
joint upper occupancy of the four colors $1,2,3,4$ among partial packing colorings of $\D{6}$.
This certificate alone does not assert completion. The separately retained
period-$1368$ complete $21$-coloring does attain this four-color density,
as recorded in Section~\ref*{M-sec:constr}. \cite[Lemma~8]{S-EHL12} already uses joint low-color occupancy and
high-color spacing for this graph; \eqref{S-eq:d6} strengthens that particular
estimate.

For $i \ge 5$ every positive displacement $\delta < 6i-9$ has distance at most
$i$. Indeed $6i-9 = 6(i-2)+3$, so in Lemma~\ref*{M-lem:metric} either $q \le i-3$,
when $\min(r,7-r) \le 3$, or $q = i-2$ and $r \le 2$. Thus consecutive
occurrences of color $i$ have integer separation at least $6i-9$, and
\begin{equation}\label{S-eq:gap6}
n_i \le \frac{N}{6i-9} + 1 .
\end{equation}

If sixteen colors covered an interval, \eqref{S-eq:d6} and \eqref{S-eq:gap6} would
imply $N \le \rho_6 N + C_6$ with
\[
\rho_6 = \frac{83}{114} + \sum_{i=5}^{16} \frac{1}{6i-9}
       = \frac{869550981179}{873408586050} < 1, \qquad
C_6 = \frac{161}{57} + 12 = \frac{845}{57}.
\]
At $N = 3357$ the difference $N - \rho_6 N - C_6$ equals
$231148633/97045398450 > 0$, a contradiction. This proves
$\chirho(\D{6}) \ge 17$. This $N$ is a sufficient interval length, not a
claimed optimal one. \qed

The improvement is located precisely in the low-color occupancy input, not in
the surrounding argument. Both \cite{S-SV15} and this section instantiate the
same scheme as \cite[proof of Lemma~8]{S-EHL12}, which applies the density-sum
inequality of \cite[Lemma~7]{S-EHL12}, due to \cite{S-FKL09}, to a computed joint
occupancy bound on an initial segment of colors plus the elementary spacing
bound \eqref{S-eq:gap6} on each remaining color. With the estimate
$d(1,\dots,8) \le 89/101$ used in
\cite{S-SV15}, the same scheme applied to sixteen colors gives
\[
\frac{89}{101} + \sum_{i=9}^{16} \frac{1}{6i-9}
 = \frac{310940202439}{308441493675} > 1 ,
\]
so that input does not exclude a sixteen-coloring; it excludes only fifteen,
which is exactly the bound obtained there. Substituting the certified value
$83/114$ for the four colors $1,2,3,4$ supplies the missing margin. The same
computation also delimits the method:
$83/114 + \sum_{i=5}^{17} 1/(6i-9) > 1$, so certificate E and the spacing bound
alone cannot exclude a seventeen-coloring.

\section{Excluding seventeen colors in \texorpdfstring{$\D{8}$}{D(1,8)}}\label{S-supp:d8-lower}

Certificates A--D and Lemma~\ref*{M-lem:cert} yield, on every common interval of
length $N$,
\begin{align}
208 n_1 + 227 n_2 + 227 n_4 &\le 137 N + 560, \tag{A}\\
4 n_1 + 5 n_5 &\le 2N + 8, \tag{B}\\
n_1 + 16 n_6 &\le N + 15, \tag{C}\\
151 n_3 + 147 n_5 &\le \tfrac{3445 N + 50077}{144}. \tag{D}
\end{align}
For completeness, (B) and (C) have direct proofs in Technical Supplement~\ref{S-supp:elementary}. Multiply
(A)--(D), in order, by $1/227$, $4/755$, $10713/171385$ and $1/151$. The
coefficient of each $n_i$ for $1 \le i \le 5$ is exactly one. The coefficient
of $n_6$ is $171408/171385 > 1$, whose excess can be discarded since
$n_6 \ge 0$. Hence
\begin{equation}\label{S-eq:d8}
\sum_{i=1}^{6} n_i \le \frac{20608891}{24679440} N
   + \frac{141906691}{24679440}.
\end{equation}

For $i \ge 7$ every displacement $\delta < 8i-20 = 8(i-3)+4$ is forbidden to
color $i$. If $\delta = 8q+r$ and $q \le i-4$, the metric is at most
$q + 4 \le i$; the only other possibility is $q = i-3$ with $r \le 3$, again
giving distance at most $i$. Therefore
\begin{equation}\label{S-eq:gap8}
n_i \le \frac{N}{8i-20} + 1 .
\end{equation}

In a seventeen-coloring, \eqref{S-eq:d8} and \eqref{S-eq:gap8} would give
$N \le \rho_8 N + C_8$, where
\[
\rho_8 = \frac{20608891}{24679440} + \sum_{i=7}^{17} \frac{1}{8i-20}
       = \frac{79832901112650281}{79834202944095600} < 1,
\]
\[
1 - \rho_8 = \frac{1301831445319}{79834202944095600}, \qquad
C_8 = \frac{413380531}{24679440}.
\]
At $N = 1027186$, exact rational arithmetic gives $\rho_8 N + C_8 < N$. This
proves $\chirho(\D{8}) \ge 18$. As in Technical Supplement~\ref{S-supp:d6-lower}, this is an unrestricted
infinite-graph bound, not a conclusion from a failed search over periodic
colorings. \qed

\section{Elementary proofs of (B) and (C)}\label{S-supp:elementary}

Every nine consecutive vertices of $\D{8}$ contain a cycle of length nine, so
at most four can have color $1$. Applying this to disjoint nine-vertex blocks
and using the path bound on a remainder gives, for $m = 9q + r$,
$0 \le r \le 8$,
\begin{equation}\label{S-eq:c1}
n_1 \text{ on an interval of length } m \;\le\; 4q + \lceil r/2 \rceil .
\end{equation}

To prove (C), consecutive color-$6$ occurrences have integer gap $d \ge 28$ by
Lemma~\ref*{M-lem:metric}. For two consecutive such positions $x < y$, the first
position of $[x,y)$ is not color $1$. Write $d-1 = 9q + r$; then $q \ge 3$.
Equation~\eqref{S-eq:c1} shows that $[x,y)$ contains at least
$d - 4q - \lceil r/2 \rceil = 5q + \lfloor r/2 \rfloor + 1 \ge 16$
positions not of color $1$. If the full interval has $k \ge 1$ color-$6$
positions, its $k-1$ interior gaps give $16(k-1)$ such positions, and its last
color-$6$ position gives one more. Thus $n_1 + 16k \le N + 15$. The case
$k = 0$ is immediate.

For (B), Lemma~\ref*{M-lem:metric} shows that the only allowed gaps between
color-$5$ positions below $28$ are $20, 21, 27$. Two successive gaps cannot
both be $20$: their outer endpoints would differ by $40$, which has graph
distance $5$. For a gap $[x,y)$ write $d = y-x$ and $d-1 = 9q+r$. Its first
site is not color $1$, so \eqref{S-eq:c1} gives
\[
4 n_1([x,y)) + 5 \le 2d - s(d), \qquad
s(d) = \begin{cases} 2q-3 & r \text{ even},\\ 2q-5 & r \text{ odd}.\end{cases}
\]
The slack is $-1$ when $d = 20$, is $1$ when $d = 21$ or $27$, and is at least
$1$ when $d \ge 28$. Nonadjacency of the gaps $20$ implies that the total slack
on any finite list of consecutive gaps is at least $-1$. For $k \ge 1$
color-$5$ positions $z_1 < \cdots < z_k$, summing over the $k-1$ interior gaps
yields $4 n_1([z_1,z_k)) + 5(k-1) \le 2(z_k - z_1) + 1$.
Let $L_0$ be the prefix length before $z_1$ and $L_1$ the suffix length
starting at $z_k$. The latter first position is color $5$. The path bound gives
\[
4 n_1(\text{outside}) \le 4\lceil L_0/2 \rceil
   + 4\lceil (L_1-1)/2 \rceil \le 2(L_0 + L_1) + 2 .
\]
Adding these inequalities and the contribution $5$ of the final color-$5$
position gives $4n_1 + 5k \le 2N + 8$. If $k = 0$, the ordinary path bound is
already stronger. This proves (B) and (C), including the boundary cases
$k = 0$ and $k = 1$. \qed

\section{Finite-state models and integer certificates}
\label{S-sec:future-constraints}

The scalar bounds admit a smaller exact model. This also supplies a
verification path independent of the earlier history encoder.

The future-constraint recognizer is defined in Main
Section~\ref*{M-sec:future-model}; the construction below reconstructs
it directly from the forbidden displacements.

For colors $1$--$4$ of $\D{6}$, this construction has $253874$
productive states and $620172$ edges. A separate direct breadth-first
reconstruction from the empty future constraints, using signed
graph-distance balls, reproduces every state and labelled edge without
reading the old $800532$-state graph or its quotient map.

The potential transfer is proved in Main
Section~\ref*{M-sec:future-model}. The following certificates use that
future-constraint representation.

The three six-label products on this model have respectively
\[
\begin{array}{c|r|r}
i&\text{states}&\text{edges}\\\hline
6&291193478&832181308\\
7&338414042&958196008\\
8&385634606&1084210708
\end{array}
\]
All four pushed-down potentials, including the weighted color-$8$
certificate, have been checked on every edge by an independent CPU
program. Their density values, boundary constants and attaining words
are unchanged. The low-five bound $65/84$ has also been replayed on
the $7870094$-state, $21002450$-edge one-age product. These checks
read neither the old history graph nor the discovery program, and do
not require a GPU. Original larger certificates are retained as
independent historical verification paths.

\subsection{Entropy and periods on the optimal low-four graph}\label{S-supp:critical-counts}
Here the critical graph and its two components are those of Main
Theorem~\ref*{M-thm:critical-structure}. The counts concern that critical
subsystem; optimal-density sequences with zero-density defects need not
lie wholly in it.

The \emph{labelled topological entropy} is the exponential growth rate
per position of the number of distinct admissible label blocks. This
counts label sequences, rather than choices of an initial graph vertex.

\begin{Stheorem}
The critical graph for colors $1$--$4$ has entropy $\log(20)/114$.
Every optimal periodic partial coloring lies in one of its two components.
The number of translation classes of least period $114k$ is
\begin{equation}\label{S-eq:optimal-period-count}
 a_k=\frac2k\sum_{d\mid k}\mu(d)20^{k/d},\qquad k\ge1,
\end{equation}
where $\mu$ is the M\"obius function. No other least periods occur.
\end{Stheorem}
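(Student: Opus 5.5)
The plan is to work entirely on the two strongly connected critical components of Main Theorem~\ref*{M-thm:critical-structure}. We use the phase function $\phi$ and the length-$114$ mixing property checked in the proof of Main Theorem~\ref*{M-thm:all-high}.

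\emph{Step 1: periodic optima are critical.} A periodic optimal partial coloring is a closed walk in the history graph of total slack zero. Every edge on it therefore has zero slack, so it is a zero-slack cycle and consists of critical edges. By strong connectivity it lies in a single component. Conversely, every closed walk in a component is an optimal periodic coloring. So optimal periodic colorings correspond exactly to closed walks in the union of the two components.

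\emph{Step 2: presentation of each component.} Each component has period $114$. Fix a phase-$0$ base vertex $v$ in a component. I expect, and would verify from the supplied data by Boolean adjacency powering, that $v$ can be chosen so that every closed walk visits $v$. Failing that, I would pass to the return-word presentation: list the simple first-return loops at $v$ avoiding $v$ internally. Given the stated entropy, the claim to check is that there are exactly $20$ such loops, all of length $114$. The paper already exhibits two length-$114$ loops per component with distinct labels, so the computation extends that. I would also verify that the labels of the $20$ loops are distinct $114$-blocks. Together with phase alignment (labels determine the phase, as in the aperiodicity argument of Main Theorem~\ref*{M-thm:critical-structure}), this makes the label system a full $20$-shift on $114$-blocks, conjugate to a full shift. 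Its entropy is $\log 20/114$ per position.

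\emph{Step 3: counting periods.} The label sequences of the two components must be disjoint. Label sequences determine histories, hence the component, so this holds. Within one component, a periodic label sequence has least period a multiple of $114$, since phase is a function of the labels. Translation classes of least period $114k$ then correspond to primitive necklaces of length $k$ over $20$ symbols. There are $\frac1k\sum_{d\mid k}\mu(d)20^{k/d}$ of them, and doubling for the two components gives \eqref{S-eq:optimal-period-count}. No least period outside $114\mathbb Z$ occurs, because a period of the labels induces a phase-preserving shift.

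\emph{Main obstacle.} The hard step is Step 2: proving that the $308$- and $304$-vertex components really are presented by exactly $20$ distinct, freely concatenable length-$114$ loops through a single cut vertex. I would try first to find a vertex whose removal makes the component acyclic, and then enumerate paths. The edge counts $318=308+10$ and $314=304+10$ suggest cyclomatic structure giving exactly this branching.
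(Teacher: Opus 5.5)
\textbf{Step 2 fails on the actual components, and Steps 2--3 depend on it.} Each component has exactly $24$ simple cycles: $20$ of length $114$ and four of length $228$. Among the $114$-cycles there are exactly four vertex-disjoint pairs. For any vertex $v$ and any such pair $C_1,C_2$, at least one of the two cycles avoids $v$.

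\begin{itemize}
\item No vertex lies on every cycle, so deleting a single vertex never makes a component acyclic.
\item The return-word presentation at $v$ is not finite. By strong connectivity, a first return to $v$ can detour to a cycle that avoids $v$ and wind around it any number of times. So there are infinitely many first-return loops, of unbounded length, not $20$ loops of length $114$.
\item Restricting to simple first-return loops does not help, because those no longer generate all closed walks.
\item The four $228$-cycles are not concatenations of $114$-loops through a common vertex.
\end{itemize}

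Consequently your identification of the label system with a full $20$-shift on $114$-blocks has no basis, and neither does the necklace bijection in Step 3. A small analogue is the graph with adjacency matrix $\bigl(\begin{smallmatrix}1&1\\1&1\end{smallmatrix}\bigr)$. It has two disjoint loops and one $2$-cycle, yet $\det(I-zA)=1-2z-z^2+z^2=1-2z$, the zeta function of the full $2$-shift.

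\textbf{What is needed is a count that does not require a single-vertex presentation.} The paper enumerates all simple cycles and their vertex-disjoint families: four disjoint pairs, and no three disjoint cycles. The cycle-cover expansion of the determinant then gives
\[
 \det(I-zA_j)=1-20z^{114}-4z^{228}+4z^{228}=1-20z^{114}.
\]

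\begin{itemize}
\item This yields spectral radius $20^{1/114}$. Labelled entropy equals path entropy because a label word determines its history path up to the initial vertex, so each word has boundedly many realizations.
\item It also yields $\operatorname{tr}(A_j^N)=114\cdot 20^k$ for $N=114k$, and $0$ when $114\nmid N$.
\item A history is determined by the preceding $24$ labels, so closed walks count indexed periodic words with no extra multiplicity.
\item M\"obius inversion of these traces, followed by division by the least period, gives $a_k$ and excludes all other least periods.
\end{itemize}

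Your Step 1, and your phase argument that every period is a multiple of $114$, are correct and consistent with this.
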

\begin{proof}
Each component identified in Main Theorem~\ref*{M-thm:critical-structure}
has $24$ simple directed cycles:
$20$ of length $114$ and four of length $228$. These lists were enumerated
on a suppressed graph and checked independently on the original graph.
There are exactly four pairs of vertex-disjoint $114$-cycles and no three
disjoint cycles. The latter also follows from the component orders being
less than $342$. The cycle-cover expansion of the determinant therefore
gives, for either adjacency matrix $A_j$,
\[
 \det(I-zA_j)=1-20z^{114}-4z^{228}+4z^{228}=1-20z^{114}.
\]
Hence the spectral radius is $20^{1/114}$. A fixed initial history and
label determine at most one next history, so any label word has at most
$612$ path realizations. Path growth and labelled-word growth thus have
the same exponential rate, proving the entropy formula.

A periodic coloring of density $83/114$ gives a closed history walk whose
total slack is zero. It must lie in one critical component. Conversely a
closed critical walk gives an optimal periodic word. The history is uniquely
determined by the preceding $24$ labels, so closed walks count indexed
periodic words without extra multiplicity from hidden states. The determinant
formula gives
\[
 \operatorname{tr}(A_j^N)=
 \begin{cases}114\cdot20^k,&N=114k,\\0,&114\nmid N.
 \end{cases}
\]
M\"obius inversion, followed by division by the least period, proves
\eqref{S-eq:optimal-period-count}. For $k=1,2,3,4$ the counts are
$40,380,5320,79800$. The counts identify translations only, without
quotienting by reflection.
\end{proof}

\subsection{The three component types}\label{S-supp:component-types}
For the high colors $i=9+14u$, $j=10+14v$, the types and quantities
$X,Y,U,V$ are those of Main Theorem~\ref*{M-thm:infinite-pair-regions}.
This table identifies each of the twelve complete low-five critical
components in the stored history graph.
\begin{center}\small
\begin{tabular}{clcc}
\toprule
Type & Component identifiers & color $i$ & color $j$\\
\midrule
$A$ & $597408,668119,775798,834520,948268,1462729$
    & $1/Y$ & $1/V$\\
$B$ & $696903,943395$ & $5/X$ & $2/U$\\
$C$ & $726571,1002926,1207134,1412245$ & $5/X$ & $1/V$\\
\bottomrule
\end{tabular}
\end{center}

\subsection{Certificates for the seven-color maximum}
\label{S-supp:cr-certificates}

We give the finite certificate hypotheses and the parameter-uniform
deductions used in Theorems~\ref*{M-thm:cr-seven} and
\ref*{M-thm:cr-residue8}. The starting low-four model is the complete
productive future-constraint graph described above, with $253874$
states and $620172$ labelled edges. Its states and all outgoing edges
are independently reconstructed from the original signed graph-distance
balls. Each state has exactly one vacancy edge.

To retain colors $5,6,7$, append their actual source ages capped at
$31,37,43$. Age one is the position immediately after an occurrence;
an output of that color sets its next age to one, and every other
output increments and saturates the age. An absent preceding occurrence
is represented by the saturated age. The allowed reset ages are
\[
 \begin{array}{c|l}
 \text{color}&\text{allowed capped source ages}\\\hline
 5&21,22,26,27,28,29,31\\
 6&27,28,32,33,34,35,37\\
 7&33,34,38,39,40,41,43.
 \end{array}
\]
The smallest allowed gaps are $21,27,33$ and the largest forbidden
displacements are $30,36,42$. Two allowed consecutive gaps exceed
every forbidden displacement, so all earlier occurrences are harmless
once the last occurrence passes the test. Low-four outputs and a
vacancy advance all ages. Output $5$, $6$ or $7$ uses the low-four
vacancy transition and resets only its own age. These are distinct
outputs; a color-$5$ or color-$7$ reset cannot also be used for color
$6$. The product consequently covers every legal partial coloring,
including intervals obtained by erasing all other colors. Checking
additional tuples without a common reachable past only strengthens
an upper certificate.

\begin{Slemma}\label{S-lem:cr-seven-cert}
For every finite interval $I$, $N=|I|$,
\begin{align}
 252M_7(I)&\le211N+805,\label{S-eq:cr-seven-finite}\\
 548M_5(I)+708(n_6(I)+n_7(I))&\le469N+1954.
       \label{S-eq:cr-seven-weighted-finite}
\end{align}
There is a legal period-$1084$ word with $(M_5,n_6,n_7)=(836,39,32)$
attaining the limiting weighted inequality.
\end{Slemma}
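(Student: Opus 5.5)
The plan is to prove both inequalities with Lemma~\ref*{M-lem:cert}, run on the three-age product just described. Its states are pairs consisting of a low-four future-constraint state (253874 states, 620172 edges) and the capped source ages of colors $5,6,7$, capped at $31,37,43$. On each edge, the outputs $5,6,7$ are allowed only from a vacancy edge of the low-four state, and only at the listed reset ages. For \eqref{S-eq:cr-seven-finite} I take reward $1$ on every label $1,\ldots,7$, with $p/q=211/252$. For \eqref{S-eq:cr-seven-weighted-finite} I take rewards $548$ on labels $1$--$5$ and $708$ on labels $6,7$, with $p/q=469/1$.

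The first step is to compute the maximum cycle mean of each reward. I would do this with Karp's algorithm or parametric longest paths, in exact integers. Next I form the integer potential $H$ as the longest-path distance from the empty state under the lengths $q\,w(e)-p$. This is finite precisely when no positive cycle exists, and its range should come out as $805$ and $1954$. An independent checker then verifies $q\,w(e)\le p+H(v)-H(u)$ on every product edge, with transitions rebuilt from signed graph-distance balls rather than read from the discovery program. Telescoping along the path traced by the erased word on $I$ then gives both inequalities, uniformly in the position of $I$. The argument preceding the lemma shows that the product covers every legal partial coloring, and extra unreachable tuples only strengthen the bound.

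For sharpness I exhibit the period-$1084$ word and check its periodic extension directly against all signed forbidden displacements, as in Lemma~\ref{M-lem:ext}. I then check that its counts $(M_5,n_6,n_7)=(836,39,32)$ give $548\cdot836+708\cdot71=469\cdot1084$. Indeed $458128+50268=508396=469\cdot1084$, so equality holds per period and the limiting weighted inequality is attained. Attainment of \eqref{S-eq:cr-seven-finite} is not needed here; the period-$252$ corner word supplies it elsewhere.

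I expect the main obstacle to be the size of the product. It has roughly $253874\cdot31\cdot37\cdot43$ states, although most tuples are unreachable or prunable, and billions of edges. To make it tractable I would quotient the ages by the facts that only the reset windows matter and that saturation is absorbing, using the fiber-maximum transfer of Section~\ref{M-sec:future-model} so that the potential stays valid. I would also find the optimal ratio by first solving on a smaller relaxation and then confirming on the full product. The exact checking pass itself is routine.
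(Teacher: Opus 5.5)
Your proposal is correct and follows the paper's proof: integer potentials on the full low-four/three-age product, verified on every outgoing edge including all color-$5,6,7$ reset families, telescoped along the interval's own path to give the constants $805$ and $1954$, together with a direct infinite-graph check of the period-$1084$ word and the identity $548\cdot836+708\cdot71=469\cdot1084$. Your remarks on how to discover the potential (longest paths from the empty state, pruning or quotienting ages) are optional; the paper's argument rests only on the independent all-edge check of the supplied potentials.
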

\begin{proof}
The full product has $253874\cdot31\cdot37\cdot43=12521319554$
states. Put $\ell=1$ on outputs $1,\ldots,5$ and zero otherwise,
and let $h_6,h_7$ indicate outputs $6,7$. The two integer potentials
$F_U,F_W$ obey, on every original product edge $p\to q$,
\begin{align*}
 F_U(q)-F_U(p)&\ge252(\ell+h_6+h_7)-211,\\
 F_W(q)-F_W(p)&\ge548\ell+708h_6+708h_7-469.
\end{align*}
These are the exact checked inequalities. The complete outgoing
inventory for each potential is
\begin{center}\small
\begin{tabular}{@{}lr@{}}\toprule
Actual output family&edges\\\midrule
Low-four output or vacancy&30587503212\\
Color-$5$ reset&2827394738\\
Color-$6$ reset&2368898294\\
Color-$7$ reset&2038354346\\\midrule
Total&37822150590\\\bottomrule
\end{tabular}
\end{center}
For example the first count is $620172\cdot31\cdot37\cdot43$;
the reset counts use $253874$ vacancy edges, seven eligible ages
for the reset color, and every age of the other two colors. Every
saturation boundary is included. The measured potential ranges are
$805$ and $1954$, and the minimum edge slack in every family is zero.
The independent checker reconstructs outgoing legality and widens
integer blocks before forming differences. The two potentials are
stored in $37$ age-$6$ layers, allowing source, target and reset
layers to be checked without a full wide-integer tensor. Discovery
code and convergence traces are not used to justify these inequalities.

Summing along the interval's own state path bounds the endpoint
differences by the respective ranges, proving
\eqref{S-eq:cr-seven-finite}--\eqref{S-eq:cr-seven-weighted-finite}
uniformly in interval position. A finite deficit need not be
nonnegative for this argument.

The data package retains the literal $1084$-word. The independent
original-metric check tests all $15620$ forbidden signed-displacement
comparisons modulo its period. Its counts are as stated, and
\[
 548\cdot836+708(39+32)=469\cdot1084.
\]
This proves the asserted periodic attainment. The period-$252$
corner already printed with Theorem~\ref*{M-thm:base-five-box} supplies
the unweighted maximum and the color-$8$ fiber; its word is reused
without a second transcription.
\end{proof}

\subsection{Individual colors on the optimal low-five layer}
\label{S-supp:cr-individual-rigidity}

\begin{Slemma}\label{S-lem:cr-low-five-rigidity}
If $M_5(I_n)/|I_n|\to65/84$ along a common interval sequence with
$|I_n|\to\infty$, then
\[
 (\rho_1(I_n),\ldots,\rho_5(I_n))
       \longrightarrow(36,12,8,5,4)/84.
\]
\end{Slemma}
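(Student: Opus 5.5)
The plan is to copy the argument of Theorem~\ref*{M-thm:critical-stability}, now on the complete low-five graph from the proof of Theorem~\ref*{M-thm:base-five-all-high}. That graph has $24816492$ vertices, an integer potential $H$ of range $301$, and slack
\[
 s(e)=65+H(v)-H(u)-84w_5(e)\ge0 .
\]
Its zero-slack cyclic part consists of ten components with $115$ vertices and two $84$-cycles. There is also a rank that is constant on internal critical edges and strictly decreases on every other zero-slack edge.

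\textbf{Step 1: defects are sparse.} For the interval $I_n$, erase colors above $5$ and follow the history path of its word. As in Theorem~\ref*{M-thm:base-five-all-high}, the number $B_n$ of noncritical edges satisfies
\[
 B_n\le K\bigl(65|I_n|-84M_5(I_n)+301\bigr)+K-1 ,
\]
where $K$ bounds the number of rank levels. The hypothesis $M_5(I_n)/|I_n|\to65/84$ therefore gives $B_n/|I_n|\to0$.

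\textbf{Step 2: color coboundaries on the critical components.} For each component and each label $c\in\{0,1,\dots,5\}$, I would exhibit an integer function $P_c$ on the critical vertices with
\[
 P_c(v)-P_c(u)=84\,\mathbf 1_{\ell(e)=c}-k_c
\]
on every critical edge, where $(k_0,\dots,k_5)=(19,36,12,8,5,4)$. This identity says that every critical cycle has exactly these label frequencies.

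The step I expect to be the main obstacle is establishing this identity on all twelve components. I would first try the following route.
\begin{itemize}
\item Compute the cycle space of each component. This is small: $116-115+1=2$ independent cycles for the larger components, and one cycle for the simple $84$-cycles.
\item Check that each independent cycle has length $84$ with counts $(19,36,12,8,5,4)$.
\item Obtain $P_c$ by integrating along a spanning tree; the cycle check is exactly what makes this well defined.
\end{itemize}
One independent cycle is supplied by the words $W_A$ and $W_B$, which already have these counts. It remains to confirm that no component has a cycle with different proportions. That is a finite check on $1328$ internal edges plus the two simple cycles, and it should be recorded in the data package.

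\textbf{Step 3: telescoping with defects.} Extend each $P_c$ by zero outside the critical vertices and let $R_c$ be its range. On a noncritical edge the identity fails by at most $C_c=\max(k_c,84-k_c)+R_c$. Summing over the path gives
\[
 \bigl|84\,n_c(I_n)-k_c|I_n|\bigr|\le R_c+C_cB_n .
\]
Dividing by $84|I_n|$ and using Step 1 yields $\rho_c(I_n)\to k_c/84$ for $c=1,\dots,5$, which is $(36,12,8,5,4)/84$.

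This argument uses only the given common sequence $I_n$, and it assumes no prior existence of individual limits. Erasing high colors does not change $n_1,\dots,n_5$, so the conclusion holds for complete colorings as well.
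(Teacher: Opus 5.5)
Your proof is correct, but it takes a different route from the paper's.

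\textbf{Your route.} You transplant the quantitative scheme of Theorem~\ref{M-thm:critical-stability}. The slack potential and the rank certificate bound the number of noncritical edges by $K(65|I_n|-84M_5(I_n)+301)+K-1$. The color coboundaries, extended by zero, then give $\bigl|84n_c(I_n)-k_c|I_n|\bigr|\le R_c+C_cB_n$.

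\textbf{The paper's route.} The paper argues by compactness instead. It takes a convergent subsequence of the empirical edge-frequency vectors in the finite low-five graph. The limit is a nonnegative circulation of mass one with zero mean slack. It is therefore supported on zero-slack edges, and, by cycle decomposition and the completeness of the critical list, on the twelve critical components. Summing the coboundaries $H_c$ against this circulation component by component gives color mass $a_c\mu/84$. Every subsequential limit gives the same vector, so the whole sequence converges.

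\textbf{What each buys.} Your version gives an explicit, position-uniform linear stability estimate in the low-five deficit, the analogue of \eqref{M-eq:critical-stability}. The paper explicitly does not claim such a rate. Your constant is enormous, though ($K=24815186$ multiplied by $C_c$), and you need the global range of the zero-extended coboundaries. The paper uses the rank certificate only qualitatively (no zero-slack cycle is omitted) and the coboundaries only on the $1328$ internal edges.

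\textbf{On Step~2.} The paper simply supplies the $H_c$ and checks all $6640$ equalities. Your cycle-space route is an equivalent way to obtain them, and it is simpler than you suggest. Each component has phase period $84$, and a larger component has $115$ vertices and $116$ edges. It must therefore be a theta graph whose only two directed cycles both have length $84$. Directed cycles generate the integer cycle lattice of a strongly connected digraph, so two directed-cycle checks per component suffice. Note that $W_A$ and $W_B$ cover only two of the twelve components, and the $1328$ internal edges already include the two simple cycles.
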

\begin{proof}
Use the complete low-five graph and its potential from the proof of
Theorem~\ref*{M-thm:base-five-all-high}. It has $24816492$ vertices and
$60028037$ edges, and
\[
 \sigma(e)=65-84\ell(e)+H(q)-H(p)\ge0,
 \qquad\operatorname{range}(H)=301.
\]
Its entire zero-slack cyclic part consists of twelve components,
with $1318$ vertices and $1328$ internal edges. The independently
checked rank is constant on their internal edges and strictly
decreases on every other zero-slack edge. Thus no omitted zero-slack
cycle exists. This completeness is a retained full-graph premise;
it does not follow from checking just the small component list.

For each component and each $c=1,\ldots,5$, the additional certificate
gives an integer function $H_c$ with
\begin{equation}\label{S-eq:cr-color-coboundary}
 H_c(q)-H_c(p)=84\,\mathbf1_{\{\text{output}=c\}}-a_c,
 \qquad(a_1,a_2,a_3,a_4,a_5)=(36,12,8,5,4).
\end{equation}
All $5\cdot1328=6640$ integer equalities are checked on the complete
internal edge lists. They determine the color proportions on every
cycle in every retained component, rather than on selected examples.

Take any convergent subsequence of the empirical edge-frequency
vectors of the intervals. The finite graph makes such a subsequence
available. Endpoint imbalance divided by $|I_n|$ vanishes, so its
limit is a nonnegative circulation of total mass one. Telescoping
$\sigma$ shows that its mean slack is zero. Nonnegativity confines
the circulation to zero-slack edges, and cycle decomposition confines
it to the twelve complete critical components. Cross-component
acyclic edges carry no circulation.

On a component of total edge mass $\mu$, summing
\eqref{S-eq:cr-color-coboundary} gives color-$c$ mass $a_c\mu/84$;
the potential difference cancels by flow conservation. The component
masses sum to one, giving the displayed five values. Every convergent
frequency subsequence gives those same color coordinates, hence the
original coordinate sequence converges. No concentration in one
component, unique word or periodicity is required, and this argument
asserts no quantitative per-color convergence rate.
\end{proof}

\subsection{A prefix-clock certificate with a finite actual-age boundary}
\label{S-supp:cr-prefix-clock}

The finite memory records small ages since the last high-color occurrence
exactly, and larger ages through a residue modulo $84$. We call this the prefix clock. The proof must still recover
a bounded potential on actual ages; the cyclic representative alone is not
assumed to be a legal occurrence gap.

Let $\mathcal G_5$ be the complete low-four/age-$5$ product. It has
$7870094$ states and $21002450$ edges; each state has one genuine
vacancy edge. It covers all low-five words. For a residue $r$ modulo
$14$, define
\[
 \mathcal E=\{-9,-8,-4,-3,-2,-1\}\cup\mathbb Z_{\ge1},\qquad
 \varepsilon_r(j)=\min\{e\in\mathcal E:6r+e\equiv j\pmod {84}\}.
\]
For $i\ge6$, the legal positive gaps for color $i$ are exactly
$6i+\mathcal E$. Their minimum is $6i-9$, and twice that minimum
exceeds the largest forbidden displacement $6i$.

\begin{Slemma}\label{S-lem:cr-prefix-clock}
Let $a,b,K$ be positive integers and $c$ an integer. Attach
to $\mathcal G_5$ clock ages $1,\ldots,K+83$, advancing by one
except that $K+83$ returns to $K$. Suppose an integer potential $Q$
satisfies both of the following on their full finite domains:
\begin{itemize}
\item every ordinary low edge advances the clock and has potential
      difference at least $a(84\ell-65)$;
\item at every clock age $j\ge K$, a genuine vacancy edge may be
      marked; it resets the clock to one and has potential difference
      at least $-65a+c-b\varepsilon_r(j)$.
\end{itemize}
Then every finite interval, for every fixed $i\ge6$ with
$i\equiv r\pmod {14}$ and $6i-9\ge K$, satisfies
\begin{equation}\label{S-eq:cr-prefix-bound}
 84aM_5+(6bi+c)n_i\le(65a+b)N+\operatorname{range}(Q)+6bi.
\end{equation}
\end{Slemma}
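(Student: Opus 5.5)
The plan is to follow the interval's own path in $\mathcal G_5$, lift it to a path in the clock product, and telescope $Q$ along that lift, exactly as in Lemma~\ref*{M-lem:cert}. Fix $i\equiv r\pmod{14}$ with $6i-9\ge K$, and an interval $I$ of length $N$ in a partial coloring. Erase every color other than $1,\ldots,5$ and $i$. The low-five projection then traces a legal path in $\mathcal G_5$ from the empty state. Each occurrence of $i$ sits on a position whose low-five label is vacant, so it uses the unique genuine vacancy edge at that state.

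The first step is to construct the clock coordinate. Before the first occurrence of $i$ in $I$, start the clock at the saturating age $K+83$ or any age $\ge K$. The actual age is unknown there, so I will simply let that initial prefix be absorbed into the boundary term; see below. After an occurrence, the clock is reset to $1$ and advances by one per position. It wraps from $K+83$ back to $K$, so the clock value $j$ after an actual gap $g$ satisfies $j\equiv g\pmod{84}$ whenever $g\ge K$, and $j=g$ when $g<K+84$.

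The key check is that every marked edge is allowed. If consecutive occurrences have gap $g$, then $g\in 6i+\mathcal E$, so $g\ge 6i-9\ge K$ and the clock age $j$ at the marking is $\ge K$. Also $j\equiv g\pmod{84}$. Writing $g=6i+e$ with $e\in\mathcal E$, we get $6r+e\equiv 6i+e\equiv j\pmod{84}$, because $6i\equiv 6r\pmod{84}$. By minimality of $\varepsilon_r(j)$, this gives $\varepsilon_r(j)\le e=g-6i$. Hence the marked-edge inequality yields a potential increase of at least $-65a+c-b(g-6i)$.

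Next, sum over the path. Ordinary low edges contribute $a(84\ell-65)$ each. Every position contributes $-65a$ in total, and the low-five outputs contribute $84aM_5$. For $n=n_i$ occurrences with interior gaps $g_1,\ldots,g_{n-1}$, the marked edges contribute $nc-b\sum_s\varepsilon_s$. Bound the first marked edge's $\varepsilon$ crudely: its clock age was some $j\ge K$, and I use $\varepsilon_r(j)\le$ the chosen starting representative. The cleanest route is to note that any $\varepsilon_r(j)\le 6i+\varepsilon$-type bound suffices, and I expect it to be absorbed by the $6bi$ term. The interior gaps give $\sum\varepsilon\le\sum g_s-6i(n-1)\le N-6i(n-1)$. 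Telescoping then gives
\[
84aM_5-65aN+nc-b\bigl(N-6i(n-1)\bigr)\le \operatorname{range}(Q),
\]
that is,
\[
84aM_5+(6bi+c)n\le(65a+b)N+\operatorname{range}(Q)+6bi.
\]
This is exactly \eqref{S-eq:cr-prefix-bound}.

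The main obstacle is the boundary bookkeeping. The first occurrence's clock age is not an actual gap, so the inequality for that marked edge must be controlled. I would choose the initial clock age $j_0\ge K$ so that the first marking happens at a legal age. One option is to set the initial age so the first marking lands at age $j$ with $\varepsilon_r(j)\le 0$, which exists since $-1\in\mathcal E$. More simply, one can count only $n-1$ marked edges with their gap bound and treat the first occurrence as an ordinary vacancy step. That costs $c+b\cdot(\text{at most }0)$ plus the difference between the weights $65a$ and the marked form. I would then verify that this leftover is dominated by the $+6bi$ slack already present, and handle $n=0$ separately, where the bound follows from the ordinary-edge inequality alone.
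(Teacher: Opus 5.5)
Your argument is correct once you commit to the first boundary option you list, and it takes a different route from the paper. You telescope $Q$ itself along a lift of the interval's path to the clock product. Because the hypotheses hold on the full finite domain, the initial phase $j_0\in[K,K+83]$ is yours to choose. Pick it so that the first mark falls at a phase with $\varepsilon_r(j)=-1$; then the first marked edge costs nothing. The interior marks obey $\varepsilon_r(j)\le g_s-6i$ through the actual gaps, and the term $6bi$ is exactly the difference between $n$ marks and $n-1$ gaps.

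The paper instead passes to actual ages. It sets $F(p,d)=Q(p,j(d))-bd$, with $j(d)$ the clock phase of age $d$, so every legal mark raises $F$ by at least $(6bi+c)-(65a+b)$. It then saturates ages at $M=6i+1$ using fiber maxima. In that route the $6bi$ reappears as the extra range $b(M-1)$ of the capped potential. An absent earlier occurrence is handled by the saturated state, where every represented age is a legal gap.

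Your route is shorter and gives a slightly smaller boundary constant. The paper's route produces an edgewise bounded potential on the genuine actual-age recognizer. That is precisely the object $\Phi$ reused in the limiting-circulation argument of Lemma~\ref{S-lem:cr-exclude22}, and a path-dependent choice of $j_0$ does not supply it.

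Your other boundary remarks should be discarded. The $6bi$ term is fully used by the count of $n$ marks against $n-1$ gaps, so it cannot absorb an arbitrary first value $\varepsilon_r(j)$, which can be as large as $84$. Treating the first occurrence as an unmarked vacancy, with $j_0$ chosen so the second mark sees the right phase, leaves an extra $c-b$ on the right-hand side. That is harmless only when $c\le b$, which holds in the applications ($c<0$) but not under the lemma's hypotheses.
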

\begin{proof}
For actual positive ages $d$ let
\[
 j(d)=\begin{cases}d,&d<K,\\K+((d-K)\bmod84),&d\ge K,
 \end{cases}
 \qquad F(p,d)=Q(p,j(d))-bd.
\]
An ordinary edge changes $F$ by at least $a(84\ell-65)-b$.
At a legal high occurrence, $d\ge6i-9\ge K$ and
$d\ge6i+\varepsilon_r(j(d))$. Thus
\[
 F(q,1)-F(p,d)\ge-65a+c-b\varepsilon_r(j(d))+b(d-1)
       \ge(6bi+c)-(65a+b).
\]
The last inequality uses the actual legal gap. A short cyclic
clock representative itself need not be a legal gap for that $i$.

The unfolded $F$ is unbounded below, so it cannot yet give a uniform
endpoint bound. Put $M=6i+1$ and keep actual ages $1,\ldots,M-1$
with one saturated age for all $d\ge M$. Its potential is
\[
 F_{\mathrm{cap}}(p)=\max_{d\ge M}F(p,d).
\]
This maximum is finite and occurs among $d=M,\ldots,M+83$, because
$M\ge K$ and $F(p,d+84)=F(p,d)-84b$. For an ordinary saturated
edge choose an age attaining the source maximum; its next age
belongs to the target fiber, whose maximum is at least its value.
The edge inequality therefore survives. The same argument covers
the exact age $M-1$ entering saturation. For a marked saturated
edge all represented ages are legal, and the target age is one,
so the preceding marked inequality applies to a maximizing source
age as well. A fiber minimum would not justify this step.

Every resulting value is at most $\max Q-b$ and at least
$\min Q-bM$: the saturated maximum is at least $F(p,M)$, and
the exact ages have the same lower bound. Its range is therefore
at most $\operatorname{range}(Q)+b(M-1)$. This finite potential
covers the full usual actual-age recognizer, including an absent
preceding high occurrence at the saturated state. Summing the
ordinary and marked inequalities proves \eqref{S-eq:cr-prefix-bound}.
Other colors can be erased before projection; the constant is
independent of the interval's position.
\end{proof}

\begin{Slemma}\label{S-lem:cr-residue8-supports}
For every finite interval of length $N$ and every $i=8+14t$,
$t\ge0$,
\begin{align}
 168M_5+(18i-18)n_i&\le133N+632+18i,\label{S-eq:cr-prefix-A}\\
 84M_5+(24i-30)n_i&\le69N+319+24i.\label{S-eq:cr-prefix-B}
\end{align}
For every $i=22+14t$, $t\ge0$, one also has
\begin{equation}\label{S-eq:cr-prefix-C}
 420M_5+(192i-255)n_i\le357N+1616+192i.
\end{equation}
\end{Slemma}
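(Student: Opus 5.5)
The plan is to obtain each inequality as an instance of Lemma~\ref{S-lem:cr-prefix-clock} with $r=8$. The parameters are read off by matching \eqref{S-eq:cr-prefix-bound} with the target. Writing the target as $84aM_5+(6bi+c)n_i\le(65a+b)N+\cdots$, the matches are as follows.

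For \eqref{S-eq:cr-prefix-A} we need $84a=168$, $6b=18$ and $c=-18$. This gives $a=2$, $b=3$, and $65a+b=133$, which checks.

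For \eqref{S-eq:cr-prefix-B} we take $a=1$, $b=4$, $c=-30$, and then $65+4=69$.

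For \eqref{S-eq:cr-prefix-C} we take $a=5$, $b=32$, $c=-255$, and then $325+32=357$.

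In each case the additive term $6bi$ in \eqref{S-eq:cr-prefix-bound} equals the stated $18i$, $24i$ and $192i$. So the constants $632$, $319$, $1616$ must be exactly $\operatorname{range}(Q)$ for the three supplied potentials.

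The steps, in order, are these. First, fix a clock threshold $K$ for each certificate. For (A) and (B) we need $6i-9\ge K$ at the smallest index $i=8$, so $K\le39$. For (C) the smallest index is $i=22$, so $K\le123$; this is the reason (C) is only claimed from $22$ on. Second, exhibit an integer potential $Q$ on the product $\mathcal G_5\times\{1,\ldots,K+83\}$. This product has at most about $7870094\cdot(K+83)$ states. Third, verify the two edge families exactly in integer arithmetic:
\begin{itemize}
\item every ordinary low edge has potential difference at least $a(84\ell-65)$;
\item every marked genuine vacancy edge at clock age $j\ge K$ has potential difference at least $-65a+c-b\varepsilon_8(j)$, using the offset table $\varepsilon_8$ computed from $48+\varepsilon\equiv j\pmod{84}$.
\end{itemize}
Fourth, record $\operatorname{range}(Q)$. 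The lemma then gives each inequality for every $i$ in the stated residue class with $6i-9\ge K$, uniformly in $t$ and in the position of the interval. Nothing else is parameter dependent, because the clock only sees $j\bmod84$ above $K$ and the legal gap set is $6i+\mathcal E$ for every $i$.

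Finding $Q$ itself is a difference-constraint feasibility problem. I would compute it as a longest-path potential by Bellman--Ford/Karp from the state (empty history, clock age $K+83$). Feasibility is equivalent to the absence of a positive cycle under the edge weights. For this I would first check that the maximum cycle mean is $0$ at the chosen $(a,b,c)$. The slopes are the sharp facets supported by the periodic vertex words of Lemma~\ref*{S-lem:cr-residue8-vertices}, so those words should give zero-weight cycles. This also explains why the constants are tight.

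The main obstacle is the finite computation. We must certify that no positive cycle exists in a graph with several hundred million states for (C), and obtain a potential whose range is small enough to give the printed constants. The risk is that the cycle mean is positive at small $K$, because short cyclic clock representatives allow marks that are illegal at the true ages. If so, I would enlarge $K$ within the allowed bound, $39$ or $123$, since that only removes spurious marked edges. The independent reconstruction of the outgoing edges from signed distance balls would then serve as the acceptance check.
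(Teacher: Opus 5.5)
Your proposal is essentially the paper's proof: it instantiates Lemma~\ref{S-lem:cr-prefix-clock} at residue $r=8$ with exactly $(K,a,b,c)=(39,2,3,-18)$, $(39,1,4,-30)$ and $(123,5,32,-255)$, and the constants $632,319,1616$ are the ranges of three independently checked integer potentials on the $V_5(K+83)$-state clock products, with $6i-9\ge K$ holding from $i=8$ for the first two bounds and from $i=22$ for the third. The paper supplies only the verified certificate data, with all minimum slacks zero, and you do not have that data. Also, since $\mathcal G_5$ is the full Cartesian product and contains tuples with no common past, longest paths from the single empty state would leave $Q=-\infty$ on such tuples. You should instead use a super-source, or apply the lemma only on the reachable part.
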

\begin{proof}
The exact finite certificates for Lemma~\ref{S-lem:cr-prefix-clock}
have the following parameters:
\begin{center}\small
\begin{tabular}{@{}crrrrcrr@{}}\toprule
Bound&$K$&$a$&$b$&$c$&range&states&edges\\\midrule
A&39&2&3&$-18$&632&960151468&3223386796\\
B&39&1&4&$-30$&319&960151468&3223386796\\
C&123&5&32&$-255$&1616&1621239364&4987592596\\\bottomrule
\end{tabular}
\end{center}
The independent outgoing checker tests all ordinary edges at every
clock age and all vacancy marks at all $84$ cyclic ages, including
the $K+83\to K$ boundary. With $V_5=7870094$ and $E_5=21002450$,
the counts are $V_5(K+83)$ states and
$E_5(K+83)+84V_5$ edges. No critical-visit flag or restriction to
tight edges is used. All minimum slacks are zero. The condition
$6i-9\ge K$ holds from $8$ for A,B and from $22$ for C in this
residue. Substitution into \eqref{S-eq:cr-prefix-bound} gives exactly
the three displayed finite inequalities.
\end{proof}

\clearpage
\subsection{Periodic vertices and their all-parameter lifts}
\label{S-supp:cr-vertex-constructions}

Start with a cyclic low-color word containing marked vacant positions. We call
this marked word a scaffold. At specified cuts between successive marks, we
insert verified low-color words that return to the same history state. Filling
the marks with the additional color then produces the parameter-dependent
periodic coloring. The cuts, return-state equalities and resulting gaps are
all part of the construction certificate.

\begin{Slemma}\label{S-lem:cr-residue8-vertices}
For $i\in\{8,22,36,\ldots\}$ put $m_0=65/84$, $g=6i-9$,
$L_F=30i-42=5g+3$ and $L_G=96i-140=16g+4$. The polygon
$\Gamma_i$ of \textup{(\ref*{M-eq:cr-residue8-polygon})} has the seven vertices
\begin{equation}\label{S-eq:cr-seven-vertices}
 \begin{gathered}
 O=(0,0),\quad A_0=(m_0,0),\quad
 E_i=\left(m_0,\frac1{g+3}\right),\\
 F_i=\left(m_0-\frac3{14L_F},\frac5{L_F}\right),\quad
 G_i=\left(m_0-\frac{20}{21L_G},\frac{16}{L_G}\right),\\
 H_i=\left(m_0-\frac{11}{140g},\frac1g\right),\quad
 Z_i=\left(0,\frac1g\right).
 \end{gathered}
\end{equation}
The vertices $E_i,F_i$ have periodic realizations for all $i=8+14t$;
$G_i,H_i$ have periodic realizations for all $i=36+14t$.
Consequently every vertex is periodically attained from $36$, and
every real point of $\Gamma_i$ is attained with ordinary limits
for those indices.
\end{Slemma}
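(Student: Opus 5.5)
The proof has two parts: a polygon computation for the seven vertices, and a construction of periodic colorings for each vertex from a fixed scaffold plus a parameter lift.

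\emph{Vertex computation.} Read \eqref{M-eq:cr-residue8-polygon} as an intersection of half-planes. Go around the boundary counterclockwise and intersect consecutive active constraints.
\begin{itemize}
\item $O=(0,0)$, and $A_0=(m_0,0)$ is where $x=m_0$ meets $y=0$.
\item $E_i$ is where $x=m_0$ meets the $84x+(24i-30)y\le69$ facet. Substituting $84m_0=65$ gives $(24i-30)y=4$, i.e. $y=1/(6i-7.5)$. This must be reconciled with $1/(g+3)=1/(6i-6)$. So I would first check which sloping facet is actually active at $x=m_0$: for $168x+(18i-18)y\le133$ one gets $(18i-18)y=3$, i.e. $y=1/(6i-6)$, which matches $E_i$.
\item $F_i$ is the intersection of the $168$-facet with the $84$-facet. $G_i$ is the intersection of the $84$-facet with the $420$-facet. $H_i$ is the intersection of the $420$-facet with $y=1/g$.
\item $Z_i=(0,1/g)$.
\end{itemize}
Each intersection is a $2\times2$ linear solve in $i$. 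I will verify symbolically that it yields the displayed coordinates, for instance $5/L_F$ and $16/L_G$ with $L_F=5g+3$ and $L_G=16g+4$. I will also check that every intermediate vertex satisfies the remaining inequalities strictly, and that the slopes are monotone so the polygon is convex with exactly these seven vertices. The strictness and slope checks need $i\ge36$ for $G_i,H_i$ to separate, which explains the threshold. This part is routine algebra.

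\emph{Trivial vertices.} These need no construction:
\begin{itemize}
\item $O$ is the empty coloring.
\item $A_0$ is $W_A$ from Theorem~\ref{M-thm:infinite-pair-regions}, which has low-five density $65/84$.
\item $Z_i$ places color $i$ alone at period $g=6i-9$. This is legal because $g$ is an allowed gap and $2g>6i$.
\end{itemize}

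\emph{Nontrivial vertices: scaffold and lift.} For $E_i,F_i,G_i,H_i$ I will use the scaffold method just described. Take a cyclic low-five word with marked vacancies, and at the base index ($8$ for $E,F$; $36$ for $G,H$) exhibit a verified periodic word realizing the vertex. From the coordinates:
\begin{itemize}
\item $E_i$ has $k=1$ high occurrence per period $6i-6$, with no low deficit.
\item $F_i$ has $k=5$ occurrences in period $L_F$, with low count $65L_F/84-\tfrac{3}{14}\cdot\tfrac1{84}\cdot 84$, i.e. deficit $d=18$ in the units of Theorem~\ref{M-thm:eventual-high-regions}.
\item $G_i$ has $k=16$ with deficit $80$.
\item $H_i$ has $k=1$ in period $g$ with deficit $11\cdot 84/140$. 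This is not an integer per period, so I would take $20$ copies, or more likely a cycle whose gaps are all exactly $g$ with a fixed low deficit.
\end{itemize}
To pass from index $i$ to $i+14$, I insert into each of the $k$ gaps a verified $84$-step low-only return word at a designated cut where the history state recurs. This is exactly the parameter translation in the proof of Theorem~\ref{M-thm:eventual-high-regions}. Each insertion adds $84$ positions and $65$ low occurrences, and by Lemma~\ref{M-lem:metric} turns each legal gap $6i+e$ into the legal gap $6(i+14)+e$. Legality of the lifted word needs three facts: gaps remain in $6i+\mathcal E$; nonconsecutive high occurrences are safe since $2(6i-9)>6i$; and low legality follows from the return-state equality at every cut. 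The counts then transform by $P_1$, which maps the vertex formula at $i$ to that at $i+14$ since $L\mapsto L+84k$ and $d$ is unchanged.

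\emph{Ordinary-limit attainment.} Apply Lemma~\ref{M-lem:density-realization}(2) to the seven periodic vertex words with vacancy buffers longer than $6i$. This realizes all convex combinations with ordinary limits; part (3) (thinning) then fills in the downward closure, which is all of $\Gamma_i$.

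\emph{Main obstacle.} The hard step is finding base scaffolds for $G$ and $H$ whose gap sequences actually achieve the tight offsets. In particular, $H_i$ needs every gap equal to the minimum $6i-9$ while losing only a fixed amount of low density. I would search on the event graph $\mathcal E_r$ for cycles attaining these $(k,b,d)$ triples, then certify the resulting words directly in the infinite graph. The failure at index $22$, where the point $(317/410,1/123)$ is excluded, warns that the $84$-step insertions may need $6i-9\ge 123$, matching $K=123$ in certificate C. That is why these two vertices should start at $36$.
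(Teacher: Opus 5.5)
Your route is the paper's. The vertices come from consecutive facet intersections; the paper writes the constraints as lower bounds on $\delta=m_0-x$ and compares crossing heights, which is the same computation. $O$, $A_0$ and $Z_i$ are handled identically. $E_i,F_i,G_i,H_i$ come from scaffolds with designated cuts, at which $84$-step return words containing $65$ low occurrences are inserted, so that $L_t=L_0+84kt$, $M_t=M_0+65kt$ and the deficit $65L_t-84M_t$ stays fixed. The hull is then filled by growing blocks with vacancy buffers. However, two of your details are wrong, and the step you call the main obstacle is left open.

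\emph{The vertex $E_i$.} It cannot be realized with one occurrence of color $i$ per period $6i-6$. That gap has offset $-6\notin\mathcal E$, and by Lemma~\ref{M-lem:metric} its distance is $i-1\le i$, so a constant gap is forbidden. This is the same reason a constant gap $42$ is forbidden for color $8$. You need $k=2$ with two legal gaps averaging $6i-6$. The paper takes $W_*$ with its two $8$'s, whose gaps are $40,44$ (offsets $-8,-4$). It uses the rotation of the erased $W_*$ just after either mark as the return loop, giving period $84+168t=2(g+3)$.

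\emph{The threshold $36$.} The seven-vertex geometry does not need $i\ge36$. The crossing heights satisfy $1/(g+3)<5/(5g+3)<16/(16g+4)<1/g$ for every $g>0$, and $11/(140g)<m_0$ already at $g=39$. So $\Gamma_i$ has these seven vertices at $i=8$ and $i=22$, as the lemma asserts; indeed $H_{22}=(317/410,1/123)\in\Gamma_{22}$ is exactly the point excluded later. The start $36$ comes from the constructions. The paper's $G$ and $H$ scaffolds contain raw gaps $123$ and $207$ with offset $-9$, and the padding counts $q_j=(6i_0+\epsilon_j-r_j)/84$ are nonnegative only when $6i_0-9\ge207$. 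For $H$ the restriction is also necessary, because $H_{22}$ is unattainable. Your suggested criterion $6i-9\ge123$ already holds at $i=22$, so it cannot explain the start at $36$.

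\emph{The vertices $G_i$ and $H_i$.} Your proposal only plans a search here, so these vertices remain unrealized. The paper closes this step with explicit certified finite data. For $G$: $k=16$, twelve gaps of offset $-9$ and four of offset $-8$, total padding $27$, deficit $80$. For $H$: $k=10$, every gap exactly $g$, total padding $14$, deficit $66$. Each designated cut is checked to return to the full low-four mask and capped color-$5$ age, and the scaffolds are longer than the $31$-position memory, so their cyclic states are genuine.
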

\begin{proof}
First consider the half-plane geometry, without assuming that the
third inequality is valid for the actual color index $8$. With
$\delta=m_0-x$, the cap on $x$ and the three sloping inequalities are
\[
 \delta\ge\max\left\{
 0,\ \frac{(g+3)y-1}{56},\
 \frac{(2g+3)y-2}{42},\
 \frac{(32g+33)y-32}{420}\right\},\qquad 0\le y\le\frac1g.
\]
The successive slopes strictly increase: the differences between
the second and first nonconstant slopes and between the third
and second are $(5g+3)/168$ and $(16g+4)/560$. Their consecutive
crossing heights, including the zero line, are
\[
 \frac1{g+3}<\frac5{5g+3}<\frac{16}{16g+4}<\frac1g.
\]
Thus each line forms the upper envelope between its consecutive
crossings. The last deficit is $11/(140g)<m_0$ for $g\ge39$,
so no further intercept with $x=0$ occurs before the high cap.
Substitution gives precisely \eqref{S-eq:cr-seven-vertices}, in
boundary order, and proves that their convex hull is $\Gamma_i$.

We next describe the finite construction certificates and why they
imply all parameters. A scaffold is a cyclic word over
$\{0,1,\ldots,5,*\}$, where a mark $*$ uses a low-five vacancy edge.
Its low projection is a closed walk of the complete low-five model.
For each consecutive pair of marks let $r_j$ be its raw gap. The
certificate supplies an offset $\epsilon_j\in\mathcal E$ and a
designated cut after the left mark and before the edge emitting the
right mark. At that cut a word $V_j$ over $\{0,\ldots,5\}$ has
length $84$, contains $65$ low-five occurrences, is legal, and
returns to the same complete low-four mask and capped age-$5$ state.
The independent construction check verifies these equalities on
every supplied loop and verifies the scaffold's original transitions.
All scaffold lengths exceed the model's $31$-position memory, so
their closed states agree with their own periodic low past.

For a base index $i_0$, the integers
\[
 q_j=\frac{6i_0+\epsilon_j-r_j}{84}
\]
are checked nonnegative. At $i=i_0+14t$, insert $q_j+t$ copies
of $V_j$ at that cut and replace the marks by color $i$.
Closed low-state returns preserve all low packing constraints,
including boundaries between pieces. The high gaps become exactly
$6i+\epsilon_j$. They are legal original gaps, and the sum of two
exceeds the largest forbidden displacement $6i$. Thus older high
occurrences cannot conflict. This proves the infinite periodic
legality of the constructed word for every integer $t\ge0$.
If the scaffold has length $R$, low count $M$ and $k$ marks,
the resulting counts are
\begin{equation}\label{S-eq:cr-loop-counts}
 L_t=R+84\sum_jq_j+84kt,\qquad
 M_t=M+65\sum_jq_j+65kt,\qquad n_i=k.
\end{equation}
In particular the integer deficit $65L_t-84M_t$ is unchanged.

The complete numerical parameters of the four families are:
\begin{center}\small
\begin{tabular}{@{}crrrrrrr@{}}\toprule
Vertex&$i_0$&$R$&raw low count&$k$&$\sum q_j$&$L_0$&$M_0$\\\midrule
$E$&8&84&65&2&0&84&65\\
$F$&8&198&153&5&0&198&153\\
$G$&36&1048&810&16&27&3316&2565\\
$H$&36&894&691&10&14&2070&1601\\\bottomrule
\end{tabular}
\end{center}
Here is the full raw gap and padding inventory, with multiplicities:
\[
 \begin{array}{c|l}
 & (r_j,\epsilon_j,q_j;\ \text{multiplicity})\\\hline
 E&(40,-8,0;1),\ (44,-4,0;1)\\
 F&(39,-9,0;2),\ (40,-8,0;3)\\
 G&(39,-9,2;8),\ (40,-8,2;4),\
          (123,-9,1;3),\ (207,-9,0;1)\\
 H&(39,-9,2;6),\ (123,-9,1;2),\ (207,-9,0;2).
 \end{array}
\]
The ordered words, cuts and loop words accompany the certificate;
these gap multisets alone are not substituted for their low-word
legality checks. The $F,G,H$ base words also pass respectively
$2436$, $40416$ and $25272$ direct signed-distance comparisons.

For $E$ the scaffold is the already printed $84$-word $W_*$
in Section~\ref*{M-sec:six-labels}, with its two $8$'s marked. Its
gaps are $40,44$. Erasing those marks gives a periodic low word
with $65$ occupied positions. Immediately after either mark,
take the cyclic rotation of that erased $84$-word as the loop.
It returns to the same finite past and age state. This gives the
$E$ row directly, including all $i=8+14t$.

For the other rows, the finite certificate tests every mark cut
and the associated $84$-step return. There are five, sixteen and
ten selected loops, respectively. Formula~\eqref{S-eq:cr-loop-counts}
now gives
\[
 \begin{array}{c|ccc|c}
 &L_t&M_t&n_i&65L_t-84M_t\\\hline
 E&84+168t&65+130t&2&0\\
 F&198+420t&153+325t&5&18\\
 G&3316+1344t&2565+1040t&16&80\\
 H&2070+840t&1601+650t&10&66.
 \end{array}
\]
The parameter in each row is $i=i_0+14t$ with its stated $i_0$.
Thus $L_E=2(g+3)$, $L_F=5g+3$, $L_G=16g+4$, and $L_H=10g$;
using $M_t/L_t=m_0-(65L_t-84M_t)/(84L_t)$ gives exactly the four
points in \eqref{S-eq:cr-seven-vertices}. The all-vacancy word,
an optimal low-five word, and the high-only word with gap $g$
give $O,A_0,Z_i$. The last word is legal because $g$ is an
allowed gap and $2g>6i$.

Fix $i\ge36$ in the residue and a convex combination of these
seven vectors, with coefficients $\theta_j\ge0$ summing to one.
If the period of vertex word $j$ is $P_j$, stage $n$ uses
$\lfloor\theta_j n/P_j\rfloor$ copies of it, grouping each word's
copies together. Separate nonempty groups and stages by a fixed
number of vacancies greater than $6i$. This removes every
cross-group equal-color conflict. A stage has useful length
$n+O_i(1)$ and its counts equal $n$ times the target vector
plus $O_i(1)$. There are at most seven buffers per stage.
After $n$ stages the length is $n(n+1)/2+O_i(n)$, with total
rounding and buffer error $O_i(n)$. A partial final stage has
length $O_i(n)$ and is negligible. Hence both ordinary prefix
densities converge at every position. Reflect on the negative
half-line and use a fixed buffer at the origin for a two-sided
coloring. This proves attainment of the entire real convex hull.
\end{proof}

\subsection{Validity of the first boundary segment from color eight}
\label{S-supp:cr-first-face}

\begin{Slemma}\label{S-lem:cr-first-face}
For every $i=8+14t$, $t\ge0$, put $C_i=6i-6$ and $L_i=30i-42$.
The entire exposed face of the ordinary region on
\[
 56x+C_i y=\frac{133}{3}
\]
is the segment $E_iF_i$ from \eqref{S-eq:cr-seven-vertices}. Its
endpoints are periodically attained and every real point is
ordinarily attained. The least coefficient in
\[
 n_i(I)\le\frac{|I|}{C_i}
       +A_i\left(\frac{65|I|}{84}-M_5(I)\right)+O_i(1)
\]
is $A_i=56/C_i$.
\end{Slemma}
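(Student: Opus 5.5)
The line $56x+C_iy=133/3$ is inequality \eqref{S-eq:cr-prefix-A} divided by $3$: $168x+(18i-18)y\le133$ becomes $56x+(6i-6)y\le133/3$. So the plan is in three parts. First show the half-plane is valid for every $i=8+14t$. Then identify the face it cuts out. Finally read off the tangent coefficient.

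\emph{Validity.} Lemma~\ref{S-lem:cr-residue8-supports} already proves \eqref{S-eq:cr-prefix-A} for all $i=8+14t$ with a uniform boundary constant. Dividing by $3N$ and passing to common limits shows every ordinary limit point satisfies $56x+C_iy\le133/3$. I would not need the third sloping inequality, so nothing is claimed at index $8$ beyond this face.

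\emph{Face identification.} The low-five cap gives $x\le m_0$, so any point of the region on the line has $y\ge(133/3-56m_0)/C_i$. Since $56m_0=130/3$, this is $y\ge 1/C_i$. The point $E_i=(m_0,1/(g+3))$ has $g+3=6i-6=C_i$, so it lies on the line. For $F_i$ one checks directly that $56\cdot 3/(14L_F)=12/L_F$ equals $C_i\cdot 5/L_F-1/3\cdot\ldots$; concretely, $5C_i-12=30i-42=L_F$, which places $F_i$ on the line.

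Both endpoints are periodically attained for all $i=8+14t$ by the $E$ and $F$ rows of Lemma~\ref{S-lem:cr-residue8-vertices}. Those rows start at $i_0=8$, and that lemma states their realizations for every $i=8+14t$. Real points of the segment then follow from Lemma~\ref{M-lem:density-realization}(2) with vacancy buffers, giving ordinary limits.

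The main obstacle is the upper end of the face. I must show that no region point on the line has $y>5/L_F$, without the third inequality (invalid at $i=8$). The second inequality \eqref{S-eq:cr-prefix-B}, valid for all $i=8+14t$, suffices. From the half-plane computation in Lemma~\ref{S-lem:cr-residue8-vertices}, the lines A and B cross exactly at height $5/(5g+3)=5/L_F$. Because B has the larger slope in $\delta$, the two constraints together force $y\le5/L_F$ on line A. Hence the exposed face is exactly $E_iF_i$.

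\emph{Tangent coefficient.} In deficit form, with $\Delta_5=65N/84-M_5$, \eqref{S-eq:cr-prefix-A} reads
\[
 C_in_i\le N+168\Delta_5/3+O_i(1),
\]
that is, $n_i\le N/C_i+(56/C_i)\Delta_5+O_i(1)$. So $A_i\le56/C_i$.

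For sharpness, repeat the periodic $F_i$ word. Its deficit per period is $18/84$, and its excess high count is $5-L_F/C_i=12/C_i$. This gives slope
\[
 \frac{12/C_i}{18/84}=\frac{56}{C_i}.
\]
Because this holds over arbitrarily many periods, it beats any fixed boundary constant, so no smaller coefficient works.
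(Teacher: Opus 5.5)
Your proposal is correct and follows essentially the same route as the paper's proof. The steps match: validity of the line from the first prefix-clock bound, $y\ge1/C_i$ from the cap $x\le m_0$, and $y\le5/L_i$ from the second prefix-clock bound (your crossing-height argument is the paper's substitution of the line into that bound). Attainment uses the $E$ and $F$ families valid from $i=8$, and minimality of $56/C_i$ comes from repeating the $F_i$ word, with deficit $18/84$ and high excess $12/C_i$ per period; the garbled intermediate expression in your on-the-line check for $F_i$ is harmless, since the identity $5C_i-12=L_F$ you then state settles it.
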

\begin{proof}
The first two bounds of Lemma~\ref{S-lem:cr-residue8-supports}
hold at every stated index. On the first line, the cap
$x\le m_0$ implies $C_i y\ge1$. Substitute
$84x=133/2-(3C_i/2)y$ into the second bound to obtain
$(15i-21)y\le5/2$, or $y\le5/L_i$. Thus every actual limit
point on the line lies between $E_i$ and $F_i$. Their periodic
families are valid from $8$, and the preceding growing-block
construction with two vertex words realizes the segment.

The first finite bound, divided by $3C_i$, gives the stated
inequality with $A_i=56/C_i$ and a fixed boundary constant.
At $F_i$ the positive low deficit is $3/(14L_i)$ and the
high gain above $1/C_i$ is $12/(C_iL_i)$, since $5C_i-L_i=12$.
Their ratio is $56/C_i$. Repeating that periodic word rules
out every smaller coefficient whatever its fixed boundary
constant. This determines this face and coefficient, without
asserting the rest of the region at $i=8$.
\end{proof}

\subsection{Excluding the predicted corner at color twenty-two}
\label{S-supp:cr-onset}

\begin{Slemma}\label{S-lem:cr-exclude22}
The point $(317/410,1/123)$ is not a limit of simultaneous
low-five/color-$22$ densities of legal finite intervals.
\end{Slemma}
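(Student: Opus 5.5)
The displayed point is the predicted corner $H_{22}$. With $g=6\cdot22-9=123$ one has $y=1/g$, and
\[
 \frac{65}{84}-\frac{317}{410}=\frac{11}{17220}=\frac{11}{140g}.
\]
So the point lies on two boundary lines: the minimum-gap cap $y\le1/(6i-9)$, and the third sloping bound \eqref{S-eq:cr-prefix-C} of Lemma~\ref{S-lem:cr-residue8-supports}, which is valid at $i=22$. I would argue by contradiction. Suppose intervals $I_n$ with $|I_n|\to\infty$ have $(M_5(I_n)/|I_n|,\,n_{22}(I_n)/|I_n|)\to(317/410,1/123)$. The aim is to show that both inequalities are then asymptotically tight, and that the set of transitions tight for both supports no cycle.

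First I would turn each bound into a nonnegative per-edge slack on a common finite model. Take the prefix-clock product of Lemma~\ref{S-lem:cr-prefix-clock} with the parameters of certificate C: $K=123$, $a=5$, $b=32$, $c=-255$. Since $K=123=g$, every actual color-$22$ gap of exactly $123$ is recorded at an exact clock age, not a cyclic representative. The potential $Q$ gives each ordinary and marked edge a slack $\sigma_C\ge0$. The unfolding argument of that lemma shows that, on the actual-age path of an interval, the total $\sigma_C$ equals $(357N+6bi)-\bigl(420M_5+(192i-255)n_i\bigr)$ up to a bounded boundary term.

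The cap gives a second slack: $\sigma_g=d-123\ge0$ on each high gap $d$, and the sum of these equals $N-123\,n_{22}+O(1)$. At the limit point both right-hand defects are $o(N)$, because the point satisfies both inequalities with equality. Hence the total of each slack along the interval paths is $o(N)$.

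Next I would pass to empirical edge frequencies, as in Lemma~\ref{S-lem:cr-low-five-rigidity}. Along a subsequence these frequencies on the finite product converge to a nonnegative circulation of mass one, with $\sigma_C$-mean zero and $\sigma_g$-mean zero. Such a circulation is supported on the common zero-slack subgraph: $\sigma_C=0$, and marked edges occur only at clock age exactly $123$. Because the high density $1/123$ is positive, the circulation must also carry marked edges.

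The finite certificate would then show that this common zero-slack subgraph contains no cycle through a marked edge, and preferably no cycle at all. I would exhibit an integer rank that is nonincreasing on common-tight edges and strictly decreasing on every marked common-tight edge, or on every edge if the subgraph is acyclic. Flow conservation then forces zero marked mass, contradicting density $1/123$; in the acyclic case no circulation of mass one exists at all.

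The main obstacle is this last certificate. It requires regenerating the roughly $1.6\times10^9$-state product of certificate C, extracting the doubly tight edges, and checking the rank on all of them exactly; the combinatorial meaning (all gaps $123$ forcing the low layer to lose more than $11/(140g)$) is hidden in that computation. I would first try to restrict to the much smaller subgraph reachable by age-exact tight marks and verify acyclicity there. As a fallback, I would add a small cycle-mean potential showing that any tight cycle with marks has strictly positive $\sigma_C$, which is equivalent for the contradiction.
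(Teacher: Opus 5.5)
Your plan is essentially the paper's proof. It uses the same two nonnegative slacks: the prefix potential of certificate C with $K=123$, $(a,b,c)=(5,32,-255)$, and the gap cap. It passes to the same limiting edge-frequency circulation on their common zero-slack edges, and it closes with the same kind of certificate; in the paper this is an integer potential $P$ with $0\le P\le13$ and $P(q)-P(p)\ge h$ on the $Q$-tight edges of the actual gap-$123$ model, giving $0\ge1/123$. The one point to state precisely is that the gap slack must be an edge slack equal to $1$ on every ordinary step at age $\ge123$ (the paper uses $V(d)=123-\min(d,123)$), so that flow conservation removes all ages above $123$. Merely confining marks to clock age $123$ is not enough: on the cyclic clock such a mark also encodes actual gaps $207,291,\ldots$, and raw-$Q$-tight marked cycles of that kind exist (the $H_{36}$ words, all of whose gaps are $207$, attain bound C), so no such certificate could exist on that larger graph.
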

\begin{proof}
Let $Q$ be the checked $K=123$, $(a,b,c)=(5,32,-255)$ prefix
potential. On the full actual color-$22$ age model, use exact
ages $1,\ldots,132$ and saturated age $133$. The bounded
potential constructed in Lemma~\ref{S-lem:cr-prefix-clock} is
\[
 \Phi(p,d)=Q(p,j(d))-32d\quad(d<133),\qquad
 \Phi(p,133)=\max_{133\le D\le216}\{Q(p,j(D))-32D\}.
\]
For each edge define two nonnegative slacks,
\begin{align*}
 \sigma_s&=\Delta\Phi-(420\ell+3969h-357),\\
 \sigma_g&=\Delta V-(123h-1),\qquad
              V(d)=123-\min(d,123),
\end{align*}
where $h$ indicates a color-$22$ mark. The first nonnegativity
is the prefix certificate. The second follows directly from
age updates: it equals one on an ordinary edge with source
age at least $123$, and zero on every other edge. Every legal
high mark has source age at least $123$.

The additional finite certificate has low-five states and
actual ages $1,\ldots,123$. It includes every ordinary low
edge at an age below $123$, and at age $123$ only a high
mark on a genuine low-five vacancy, resetting the high age
to one. The color-$5$ age advances at this mark; it does not
reset or emit $5$. Retain precisely the edges on which the
raw $Q$ difference equals
\[
 5(84\ell-65)\quad\hbox{on an ordinary edge},\qquad
 -325-255+9\cdot32=-292\quad\hbox{on a high mark}.
\]
The clock agrees with the actual age on $1,\ldots,123$.
The independent check recomputes old tightness on the entire
candidate graph and verifies an integer potential $P$, with
$0\le P\le13$, satisfying
\begin{equation}\label{S-eq:cr-corner-P}
 P(q)-P(p)\ge h
\end{equation}
on every retained edge. Its complete inventory is
\begin{center}\small
\begin{tabular}{@{}lrr@{}}\toprule
Family&candidate edges&$Q$-tight edges\\\midrule
Ordinary low-four output or vacancy&2345490504&117676726\\
Color-$5$ reset&216808396&188213873\\
High mark at actual age $123$&7870094&855363\\\midrule
Total&2570168994&306745962\\\bottomrule
\end{tabular}
\end{center}
All old candidate slacks are nonnegative, and all retained
new slacks in \eqref{S-eq:cr-corner-P} are nonnegative. This is
an all-edge certificate on the specified actual-age graph,
not a test of one proposed periodic word.

Suppose intervals of lengths tending to infinity had the
claimed limiting point. Extract a convergent subsequence of
their edge-frequency vectors in the full finite actual-age
model. It limits to a circulation $f$ of total mass one,
with high-mark mass $1/123$. Boundedness of both potentials
and the equality coordinates give
\[
 \sum_e f(e)\sigma_s(e)
       =357-420\frac{317}{410}-3969\frac1{123}=0,
 \qquad
 \sum_e f(e)\sigma_g(e)=1-123\frac1{123}=0.
\]
Nonnegativity removes all positive-slack edges from its
support. In particular all ordinary edges at ages at least
$123$ have zero flow. Their incoming chains are the only
way to reach ages $124,\ldots,133$, so flow conservation
removes those vertices too, including the saturated self
edge. Thus the surviving flow is on the actual gap-$123$
model just described.

On ordinary edges there, $\Delta\Phi=\Delta Q-32$; on its
marks, $\Delta\Phi=\Delta Q+32(123-1)$. Hence
$\sigma_s=0$ is exactly the tested raw $Q$-tightness in
both cases. We may sum \eqref{S-eq:cr-corner-P} against the
remaining circulation. The potential differences cancel,
giving $0\ge1/123$, a contradiction.

The argument concerns the support of a limiting circulation.
It does not claim that every gap in an actual upper-density
maximizer is $123$. It excludes the point even when individual
intervals have transient or exceptional gaps.
\end{proof}

For completeness, the geometric calculation in
Lemma~\ref{S-lem:cr-residue8-vertices} puts
$(317/410,1/123)=H_{22}$ in $\Gamma_{22}$. All three actual
supports are valid at $22$, so
$\mathcal R_{22}\subsetneq\Gamma_{22}$. Every later index
$36+14t$ has the upper inclusion and all seven lower
constructions. This proves the exact tail-start assertion
of Theorem~\ref*{M-thm:cr-residue8}: starts $8$ and $22$ would
include the failed index $22$, whereas start $36$ succeeds.
The unrestricted integer cutoff is consequently $23$.

\subsection{Which certificates support the stated bounds}
\label{S-supp:cr-proof-roles}

The required added data have two roles. The seven-color
module contains the two complete $37$-layer actual-age
potentials, their outgoing verifiers, the $1084$ equality
word, and the five component color potentials. It retains
the complete low-five critical-graph dependency; the $6640$
small identities alone do not certify completeness. The
already proved six-label totals and weighted color-$8$
support supply the inherited endpoint bounds used in the
main proof. The printed $252$ corner is reused.

The residue-eight module contains the three complete prefix
potentials, the actual-$22$ tight-edge potential, and the
$E,F,G,H$ scaffold/cut/return-word data. The finite original
transitions, all designated return equalities, low counts,
gap congruences and literal metric checks establish their
parameter formulas as proved above. The prefix proof and
the actual-$22$ circulation argument give the upper scope
and the least start.

Verification checks all selected transitions and integer identities.
The finite-model interpretation, bounded-potential transfer and limit
arguments above remain necessary parts of the proof.

\section{Short equality witnesses}\label{S-supp:short-witnesses}

\subsection{The sharp low-four/color-five endpoint}
For a directly inspectable sharpness witness, concatenate the following five
rows and repeat. The symbol $0$ denotes a vacancy; its counts for colors
$1,2,3,4,5$ are $106,35,25,16,11$.
\begin{center}\footnotesize\ttfamily
21012103151012413100121021310145101312012101041013\\
12512100131421010130121512413101001213120141001315\\
21012134101010213121501413102101210310141521312100\\
10101421312105101312410101001213125141013012101201\\
31410512132101010413121021013105141012312101001314
\end{center}
The displayed word is generated from the certificate and checked against
every signed displacement forbidden for its color in the infinite graph.

\subsection{Colors 1--5 with one additional color}
For completeness, the words used above are printed below; concatenate
successive rows with the same name and repeat periodically. The label $0$
means an unoccupied position. Each $W_i$ uses colors $1$--$5$ and $i$;
$W_*$ is the low-optimal color-$8$ endpoint.
\begin{center}\small
\begin{tabular}{rl}
$W_{6}$ & \texttt{610101021312154101310210121061413152101213010141021312156101} \\
 & \texttt{314210121031010152131214} \\
$W_{7}$ & \texttt{710131021012143101015213121041010172131215014131021012103101} \\
 & \texttt{415213121001017142131215010131021012143101015213121041710102} \\
 & \texttt{131215014131021012103101415213121701010142131215010131021012} \\
 & \texttt{143101715213121041010102131215014131021712103101415213121001} \\
 & \texttt{010142131215} \\
$W_{8}$ & \texttt{031218210131401510123121010413102101251310148121312010101041} \\
 & \texttt{213125101013012102141310815121321014100131210215131401010123} \\
 & \texttt{121081413512101201310140121312510101041213128101013512101241} \\
 & \texttt{310100121312514101} \\
$W_{*}$ & \texttt{031418152131210010141021312150101314210121031810152131214010} \\
 & \texttt{101021312154101310210121} \\
\end{tabular}
\end{center}

\subsection{The simultaneous 6,7,8 corner}
Concatenate these rows and repeat to obtain the corner word:
\begin{center}\small
\begin{tabular}{l}
\texttt{413182171210316141521312100101014213121571613102181214310101} \\
\texttt{521312104161017213121501413182101210316141521312100101714213} \\
\texttt{121501613102181214010131521712134161010213121501413182101210} \\
\texttt{316141521312170101014213121501613102181214310171521312104161} \\
\texttt{010213121501} \\
\end{tabular}
\end{center}

\section{The complete periods}\label{S-supp:periods}

Read left to right and concatenate rows; the first entry has residue $0$.

\subsection*{\texorpdfstring{$\D{6}$}{D(1,6)}, colors 1 through 20}
%% Generated by build_d6_construction.py from D6_CURRENT_CONSTRUCTION.json.
Work on positions $0,\ldots,2519$. For each row of the first table,
place the indicated color at every residue in the specified period.
\begin{center}
\begin{tabular}{rrl}
\toprule
Color & Period & Residues \\
\midrule
1 & 7 & $\{1,3,6\}$ \\
2 & 14 & $\{5,9\}$ \\
4 & 84 & $\{0,16,32,53,68\}$ \\
5 & 21 & $\{18\}$ \\
6 & 28 & $\{14\}$ \\
7 & 252 & $\{7,40,74,114,147,193,226\}$ \\
8 & 84 & $\{4,49\}$ \\
\bottomrule
\end{tabular}
\end{center}
For color $3$, use the two patterns
\begin{align*}
A_0&=\{2,12,21,35,44,54,63,77\},\\
A_1&=\{2,12,21,35,44,58,67,77\}.
\end{align*}
In the consecutive blocks $84b+[0,83]$, place color $3$ at $84b+A_{\epsilon_b}$,
where the binary choices, in order, are
\[
\texttt{010010110010010010010010110010}.
\]
All these prescribed positions are disjoint. In each block
$252b+[0,251]$, exactly $35$ positions are still empty. List them in
increasing order as $r_{b,0},\ldots,r_{b,34}$, and give $r_{b,j}$ the
color in row $b$, column $j$ of the following two panels.
\begin{center}\small
\begin{tabular}{r|rrrrrrrrrrrrrrrrrr}
\toprule
$b\backslash j$ & 0 & 1 & 2 & 3 & 4 & 5 & 6 & 7 & 8 & 9 & 10 & 11 & 12 & 13 & 14 & 15 & 16 & 17 \\
\midrule
0 & 9 & 19 & 10 & 15 & 16 & 11 & 9 & 13 & 12 & 17 & 10 & 14 & 18 & 15 & 11 & 9 & 16 & 12 \\
1 & 10 & 17 & 9 & 12 & 15 & 11 & 18 & 13 & 10 & 16 & 14 & 12 & 9 & 19 & 11 & 15 & 10 & 17 \\
2 & 9 & 18 & 13 & 15 & 16 & 10 & 12 & 9 & 14 & 17 & 11 & 19 & 13 & 15 & 10 & 9 & 18 & 12 \\
3 & 9 & 11 & 12 & 15 & 17 & 10 & 19 & 14 & 9 & 13 & 11 & 16 & 12 & 10 & 15 & 9 & 18 & 17 \\
4 & 20 & 15 & 9 & 11 & 16 & 19 & 14 & 10 & 12 & 18 & 13 & 11 & 9 & 17 & 10 & 15 & 16 & 12 \\
5 & 19 & 11 & 18 & 15 & 10 & 12 & 17 & 14 & 9 & 13 & 11 & 10 & 16 & 12 & 15 & 9 & 19 & 18 \\
6 & 9 & 15 & 17 & 11 & 10 & 16 & 20 & 14 & 12 & 9 & 10 & 13 & 18 & 19 & 11 & 15 & 9 & 12 \\
7 & 10 & 12 & 17 & 9 & 15 & 11 & 18 & 13 & 10 & 14 & 19 & 16 & 9 & 11 & 12 & 15 & 10 & 17 \\
8 & 9 & 18 & 13 & 15 & 16 & 11 & 10 & 9 & 14 & 17 & 12 & 19 & 13 & 11 & 15 & 9 & 18 & 10 \\
9 & 9 & 10 & 17 & 15 & 11 & 12 & 19 & 14 & 9 & 13 & 10 & 16 & 11 & 15 & 12 & 9 & 17 & 18 \\
\bottomrule
\end{tabular}
\end{center}
\begin{center}\small
\begin{tabular}{r|rrrrrrrrrrrrrrrrr}
\toprule
$b\backslash j$ & 18 & 19 & 20 & 21 & 22 & 23 & 24 & 25 & 26 & 27 & 28 & 29 & 30 & 31 & 32 & 33 & 34 \\
\midrule
0 & 19 & 10 & 13 & 9 & 14 & 11 & 17 & 12 & 15 & 10 & 18 & 9 & 16 & 13 & 11 & 14 & 19 \\
1 & 13 & 9 & 18 & 14 & 12 & 10 & 11 & 16 & 9 & 15 & 13 & 19 & 17 & 12 & 14 & 10 & 11 \\
2 & 14 & 16 & 11 & 9 & 10 & 13 & 17 & 12 & 19 & 15 & 9 & 11 & 14 & 18 & 16 & 10 & 13 \\
3 & 14 & 11 & 13 & 9 & 10 & 12 & 19 & 16 & 15 & 11 & 9 & 14 & 18 & 10 & 17 & 13 & 12 \\
4 & 14 & 9 & 11 & 19 & 10 & 13 & 18 & 15 & 9 & 12 & 17 & 11 & 10 & 14 & 16 & 9 & 13 \\
5 & 14 & 11 & 10 & 9 & 13 & 17 & 12 & 16 & 15 & 11 & 9 & 10 & 14 & 19 & 18 & 12 & 13 \\
6 & 10 & 14 & 16 & 17 & 13 & 11 & 9 & 15 & 10 & 12 & 18 & 19 & 9 & 13 & 11 & 14 & 16 \\
7 & 13 & 9 & 18 & 14 & 11 & 10 & 12 & 16 & 9 & 15 & 13 & 19 & 17 & 11 & 14 & 12 & 10 \\
8 & 14 & 16 & 12 & 9 & 11 & 13 & 17 & 10 & 19 & 15 & 9 & 14 & 11 & 12 & 18 & 16 & 13 \\
9 & 14 & 10 & 13 & 9 & 19 & 11 & 12 & 16 & 10 & 15 & 9 & 14 & 17 & 13 & 11 & 18 & 12 \\
\bottomrule
\end{tabular}
\end{center}
This specifies the full word without vacancies. Repeat it with period
$2520$ on $\mathbb Z$; the largest label is $20$.

\clearpage
\subsection*{\texorpdfstring{$\D{8}$}{D(1,8)}, colors 1 through 22}

\begin{center}\footnotesize
\setlength{\tabcolsep}{3pt}
\begin{tabular}{rrrrrrrrrrrrrrrrrr}
 1 &  4 &  1 &  2 &  1 &  7 &  1 &  2 &  3 &  1 &  8 &  1 &  3 &  1 &  4 &  1 &  5 &  2 \\
 1 & 11 &  1 &  2 &  1 & 13 &  1 & 20 &  3 &  1 &  6 &  1 &  3 &  1 &  2 &  1 &  4 &  2 \\
 1 &  5 &  1 &  9 &  1 &  7 &  1 & 10 &  3 &  1 &  2 &  1 &  3 &  1 &  2 &  1 & 16 &  4 \\
 1 &  8 &  1 &  5 &  1 & 22 &  1 &  2 &  3 &  1 &  2 &  1 &  3 &  1 & 19 &  1 & 14 &  6 \\
 1 &  4 &  1 &  2 &  1 &  7 &  1 &  2 &  3 &  1 & 12 &  1 &  3 &  1 &  4 &  1 & 15 &  2 \\
 1 &  9 &  1 &  2 &  1 &  5 &  1 & 11 &  3 &  1 &  6 &  1 &  3 &  1 &  2 &  1 &  4 &  2 \\
 1 &  8 &  1 & 10 &  1 &  7 &  1 &  5 &  3 &  1 &  2 &  1 &  3 &  1 &  2 &  1 & 17 & 13 \\
 1 &  4 &  1 &  6 &  1 & 21 &  1 &  2 &  3 &  1 &  2 &  1 &  3 &  1 &  4 &  1 &  5 &  9 \\
 1 & 18 &  1 &  2 &  1 &  7 &  1 &  2 &  3 &  1 &  8 &  1 &  3 &  1 &  6 &  1 &  4 &  2 \\
 1 &  5 &  1 &  2 &  1 & 12 &  1 & 14 &  3 &  1 & 11 &  1 &  3 &  1 &  2 &  1 & 16 &  2 \\
 1 &  4 &  1 &  5 &  1 &  7 &  1 &  6 &  3 &  1 &  2 &  1 &  3 &  1 &  2 &  1 & 15 & 20 \\
 1 &  8 &  1 &  4 &  1 &  9 &  1 &  2 &  3 &  1 &  2 &  1 &  3 &  1 &  5 &  1 &  4 &  6 \\
 1 & 10 &  1 &  2 &  1 &  7 &  1 &  2 &  3 &  1 & 19 &  1 &  3 &  1 & 22 &  1 & 13 &  2 \\
 1 &  4 &  1 &  2 &  1 &  5 &  1 & 17 &  3 &  1 &  6 &  1 &  3 &  1 &  2 &  1 & 11 &  2 \\
 1 &  8 &  1 &  4 &  1 &  7 &  1 &  5 &  3 &  1 &  2 &  1 &  3 &  1 &  2 &  1 &  4 & 18 \\
 1 &  9 &  1 &  6 &  1 & 12 &  1 &  2 &  3 &  1 &  2 &  1 &  3 &  1 & 14 &  1 &  5 & 10 \\
 1 &  4 &  1 &  2 &  1 &  7 &  1 &  2 &  3 &  1 &  8 &  1 &  3 &  1 &  4 &  1 & 15 &  2 \\
 1 &  5 &  1 &  2 &  1 &  6 &  1 & 16 &  3 &  1 & 21 &  1 &  3 &  1 &  2 &  1 &  4 &  2 \\
 1 & 11 &  1 &  5 &  1 &  7 &  1 &  9 &  3 &  1 &  2 &  1 &  3 &  1 &  2 &  1 &  6 &  4 \\
 1 &  8 &  1 & 13 &  1 & 20 &  1 &  2 &  3 &  1 &  2 &  1 &  3 &  1 &  5 &  1 & 17 & 19 \\
 1 &  4 &  1 &  2 &  1 &  7 &  1 &  2 &  3 &  1 & 10 &  1 &  3 &  1 &  4 &  1 &  5 &  2 \\
 1 & 12 &  1 &  2 &  1 & 14 &  1 &  6 &  3 &  1 &  8 &  1 &  3 &  1 &  2 &  1 &  4 &  2 \\
 1 &  5 &  1 &  9 &  1 &  7 &  1 & 11 &  3 &  1 &  2 &  1 &  3 &  1 &  2 &  1 & 18 &  6 \\
 1 &  4 &  1 &  5 &  1 & 15 &  1 &  2 &  3 &  1 &  2 &  1 &  3 &  1 &  4 &  1 & 16 & 10 \\
 1 &  8 &  1 &  2 &  1 &  7 &  1 &  2 &  3 &  1 &  6 &  1 &  3 &  1 &  5 &  1 &  4 &  2 \\
 1 &  9 &  1 &  2 &  1 & 12 &  1 & 22 &  3 &  1 & 13 &  1 &  3 &  1 &  2 &  1 &  5 &  2 \\
 1 &  4 &  1 &  6 &  1 &  7 &  1 & 14 &  3 &  1 &  2 &  1 &  3 &  1 &  2 &  1 & 17 &  8 \\
 1 &  5 &  1 &  4 &  1 & 10 &  1 &  2 &  3 &  1 &  2 &  1 &  3 &  1 &  6 &  1 &  4 & 11 \\
 1 & 19 &  1 &  2 &  1 &  7 &  1 &  2 &  3 &  1 &  5 &  1 &  3 &  1 & 21 &  1 &  9 &  2 \\
 1 &  4 &  1 &  2 &  1 & 15 &  1 &  6 &  3 &  1 & 12 &  1 &  3 &  1 &  2 &  1 &  8 &  2 \\
 1 &  5 &  1 &  4 &  1 &  7 &  1 & 16 &  3 &  1 &  2 &  1 &  3 &  1 &  2 &  1 &  4 &  6 \\
 1 & 10 &  1 &  5 &  1 & 18 &  1 &  2 &  3 &  1 &  2 &  1 &  3 &  1 &  9 &  1 & 13 & 20 \\
 1 &  4 &  1 &  2 &  1 &  7 &  1 &  2 &  3 &  1 &  6 &  1 &  3 &  1 &  4 &  1 & 14 &  2 \\
 1 & 11 &  1 &  2 &  1 &  5 &  1 & 17 &  3 &  1 &  8 &  1 &  3 &  1 &  2 &  1 &  4 &  2 \\
 1 & 22 &  1 &  6 &  1 &  7 &  1 &  5 &  3 &  1 &  2 &  1 &  3 &  1 &  2 &  1 & 10 &  4 \\
 1 & 12 &  1 &  9 &  1 & 15 &  1 &  2 &  3 &  1 &  2 &  1 &  3 &  1 &  6 &  1 &  5 & 19 \\
 1 &  4 &  1 &  2 &  1 &  7 &  1 &  2 &  3 &  1 & 13 &  1 &  3 &  1 &  4 &  1 & 16 &  2 \\
 1 &  5 &  1 &  2 &  1 & 11 &  1 &  6 &  3 &  1 & 21 &  1 &  3 &  1 &  2 &  1 &  4 &  2 \\
 1 &  8 &  1 &  5 &  1 &  7 &  1 & 14 &  3 &  1 &  2 &  1 &  3 &  1 &  2 &  1 &  9 &  4 \\
 1 & 10 &  1 & 18 &  1 & 12 &  1 &  2 &  3 &  1 &  2 &  1 &  3 &  1 &  5 &  1 & 17 &  6 \\
\end{tabular}
\end{center}

\clearpage
\subsection*{\texorpdfstring{$\D{9}$}{D(1,9)}, colors 1 through 17}

\begin{center}\footnotesize
\setlength{\tabcolsep}{3pt}
\begin{tabular}{rrrrrrrrrrrrrrrrrr}
 1 &  3 &  1 &  2 &  1 &  3 &  1 &  2 &  1 &  5 &  1 & 15 &  1 &  4 &  1 & 13 &  1 &  7 \\
 1 &  2 &  1 &  3 &  1 &  2 &  1 &  3 &  1 &  4 &  1 &  9 &  1 &  6 &  1 &  5 &  1 &  2 \\
 1 &  3 &  1 &  2 &  1 &  3 &  1 &  8 &  1 & 12 &  1 &  5 &  1 &  7 &  1 &  2 &  1 &  3 \\
 1 &  2 &  1 &  3 &  1 & 11 &  1 & 14 &  1 &  6 &  1 &  4 &  1 &  2 &  1 &  3 &  1 &  2 \\
 1 &  3 &  1 & 10 &  1 &  5 &  1 &  4 &  1 &  7 &  1 &  2 &  1 &  3 &  1 &  2 &  1 &  3 \\
 1 &  5 &  1 &  8 &  1 &  6 &  1 &  9 &  1 & 17 &  1 &  3 &  1 &  2 &  1 &  3 &  1 &  2 \\
 1 & 13 &  1 &  4 &  1 &  7 &  1 &  5 &  1 &  3 &  1 &  2 &  1 &  3 &  1 &  2 &  1 &  4 \\
 1 &  6 &  1 &  5 &  1 & 12 &  1 & 15 &  1 &  2 &  1 &  3 &  1 &  2 &  1 &  3 &  1 &  8 \\
 1 &  7 &  1 &  9 &  1 &  4 &  1 &  2 &  1 &  3 &  1 &  2 &  1 &  3 &  1 & 10 &  1 &  5 \\
 1 &  4 &  1 & 14 &  1 &  6 &  1 &  3 &  1 &  2 &  1 &  3 &  1 &  2 &  1 &  7 &  1 & 11 \\
 1 & 16 &  1 &  5 &  1 &  3 &  1 &  2 &  1 &  3 &  1 &  2 &  1 &  8 &  1 & 13 &  1 &  4 \\
 1 &  6 &  1 &  3 &  1 &  2 &  1 &  3 &  1 &  2 &  1 &  7 &  1 &  4 &  1 &  9 &  1 &  5 \\
 1 &  3 &  1 &  2 &  1 &  3 &  1 &  2 &  1 & 12 &  1 & 10 &  1 &  5 &  1 &  6 &  1 &  3 \\
 1 &  2 &  1 &  3 &  1 &  2 &  1 &  7 &  1 &  8 &  1 &  4 &  1 & 15 &  1 &  3 &  1 &  2 \\
 1 &  3 &  1 &  2 &  1 & 11 &  1 & 17 &  1 &  5 &  1 &  6 &  1 &  3 &  1 &  2 &  1 &  3 \\
 1 &  2 &  1 &  7 &  1 &  5 &  1 &  4 &  1 &  9 &  1 &  3 &  1 &  2 &  1 &  3 &  1 &  2 \\
 1 & 13 &  1 &  4 &  1 &  8 &  1 &  6 &  1 &  3 &  1 &  2 &  1 &  3 &  1 &  2 &  1 &  7 \\
 1 & 14 &  1 &  5 &  1 & 12 &  1 &  3 &  1 &  2 &  1 &  3 &  1 &  2 &  1 &  4 &  1 &  5 \\
 1 & 11 &  1 &  6 &  1 &  3 &  1 &  2 &  1 &  3 &  1 &  2 &  1 &  7 &  1 &  9 &  1 & 10 \\
 1 &  4 &  1 &  3 &  1 &  2 &  1 &  3 &  1 &  2 &  1 &  8 &  1 &  5 &  1 & 16 &  1 &  6 \\
\end{tabular}
\end{center}

\section{Minimum infinite distances}\label{S-supp:minima}

\begin{center}
\begin{tabular}{rrrr}
\toprule
Color $i$ & $\D{6}$ & $\D{8}$ & $\D{9}$ \\
\midrule
1 & 2 & 2 & 2 \\
2 & 3 & 3 & 4 \\
3 & 4 & 4 & 4 \\
4 & 5 & 5 & 6 \\
5 & 6 & 6 & 6 \\
6 & 7 & 7 & 8 \\
7 & 8 & 8 & 8 \\
8 & 9 & 9 & 10 \\
9 & 10 & 10 & 10 \\
10 & 11 & 11 & 12 \\
11 & 12 & 12 & 12 \\
12 & 13 & 13 & 14 \\
13 & 14 & 14 & 14 \\
14 & 15 & 15 & 16 \\
15 & 16 & 17 & 16 \\
16 & 17 & 17 & 24 \\
17 & 18 & 18 & 20 \\
18 & 19 & 19 & \text{--} \\
19 & 20 & 20 & \text{--} \\
20 & 94 & 21 & \text{--} \\
21 & \text{--} & 22 & \text{--} \\
22 & \text{--} & 23 & \text{--} \\
\bottomrule
\end{tabular}
\end{center}

\section{Limits of a printed density relaxation}\label{S-app:limits}

The density bounds are necessary conditions for a packing coloring; a
feasible vector for them need not be a coloring. We isolate a small
relaxation using only inequalities printed in this paper.

\subsection*{A small relaxation for seventeen colors}
Write $x=x_1+\cdots+x_5$ and maximize
$x+x_6+\cdots+x_{17}$ subject to exactly the following inequalities:
\[
 \begin{gathered}
 0\le x\le65/84,\qquad x_j\ge0\quad(6\le j\le17),\\
 548x+708(x_6+x_7)\le469,\qquad
 147x+220x_9\le355/3,\\
 x_j\le\frac1{6j-9}\qquad(8\le j\le17).
 \end{gathered}
\]
The first cap is the low-five optimum, the weighted seven-color row is
Theorem~\ref*{M-thm:cr-seven}, and the color-$9$ row is the limiting form
of Theorem~\ref*{M-thm:sharp-high-tangents}. The remaining rows are spacing
bounds and nonnegativity. No unprinted color-$7$ or joint-fiber constraint
is part of this diagnostic system. Its exact optimum is
\[
 U_{\rm small}=\frac{225935334894901}{225904042973700}>1.
\]
For an upper certificate, take $1/708$ of the weighted seven-color row,
$40/26019$ of the color-$9$ row, $17219/26019$ of the color-$9$ spacing
cap, and one copy of every other spacing cap for colors $8,\ldots,17$.
All weights are nonnegative. The coefficient on $x$ is
$(137+40)/177=1$, and the coefficient on $x_9$ is
$(8800+17219)/26019=1$; all the other objective coefficients are also one.
The right sides sum to $U_{\rm small}$. A matching feasible point is
\[
 (x,x_6,x_7)=\left(\frac{1021}{1323},\frac{261}{7154},\frac{279547}{9768276}\right),\qquad
 x_i=\frac1{6i-9}\quad(8\le i\le17).
\]
Direct substitution verifies every displayed row and gives objective
$U_{\rm small}$. Since the zero vector is feasible, scaling this point
by $1/U_{\rm small}$ gives a feasible point of total density one.
Thus these particular printed inequalities cannot exclude $17$ colors
by linear multipliers. This is not a $17$-coloring and does not limit
all possible joint-color methods. The diagnostic is not a premise of
Theorems~\ref*{M-thm:cr-seven} and~\ref*{M-thm:cr-residue8}.

\section{Reproducibility}\label{S-supp:replay}
The verification programs reconstruct finite graphs directly from the
coloring constraints. For the five initial certificates they regenerate
the history transitions, check all potential inequalities and attaining
words, and recompute the rational chromatic exclusions. Periodic
constructions are checked using infinite-graph distances. These checks
use separate implementations from the searches that produced the
potentials and colorings.

The structural results additionally require complete critical graphs,
component and color potentials, cycle lists, and age-product models.
The all-color capacity formula uses the Boolean mixing checks, phase
potentials and attaining cycles in the main proof. Joint-region
certificates use the same low-color history for simultaneously present
colors. Integer residual storage is lossless; the verifiers reconstruct
the potential values and use wide signed arithmetic. Their purpose is
to check every relevant transition, not to estimate a density from
sampled walks.

For component competition, the checks cover the twelve phase sets,
$10264$ component inequalities, the two $84$-period low words and
the four gap schedules. They verify the component walks, congruences,
infinite-graph distances and parameter-dependent period identities.
The logarithmic deficit theorem also uses the complete critical-layer
decomposition and the analytic run-length argument of
Section~\ref*{M-sec:component-competition}.

The sharp unrestricted tradeoffs use outgoing inequalities for the
complete joint models and exact equality words. The eventual-region
theorems further require the complete critical graph and the exact-length
argument of Section~\ref*{M-sec:eventual-high-regions}; checking individual
scalar inequalities alone does not establish the event-graph description.

The seven-color maximum and rigidity use the two complete three-age
certificates, the $252$- and $1084$-period witnesses and the color
identities on the fully identified optimal low-five graph
(Sections~\ref{S-supp:cr-certificates}--\ref{S-supp:cr-individual-rigidity}).
The residue-eight region uses the three prefix-clock certificates,
the actual-$22$ tight graph and the exact scaffold, cut and return-word
data (Sections~\ref{S-supp:cr-prefix-clock}--\ref{S-supp:cr-onset}).
The saturated-fiber transfer and ordinary-limit constructions in those
sections are necessary mathematical parts of the proof.

The data include the exact inputs, verifiers and instructions needed
for these checks. Computational resource requirements and execution
records are documented there. The rational relaxation in
Section~\ref{S-app:limits} is a separate diagnostic, not a premise of
the density theorems.

\subsection*{Use of generative AI}
Generative AI tools, including OpenAI Codex, assisted with mathematical
analysis, research programming, literature review and manuscript preparation.
The author has reviewed the manuscript and takes responsibility for its content.

\end{document}